\author{Adam Brown}
\title{Arakawa-Suzuki functors for Whittaker modules}
\date{}
\numberwithin{equation}{section}
\begin{document}

\maketitle

\newcommand{\bbm}{\begin{bmatrix}}
\newcommand{\ebm}{\end{bmatrix}}
\newcommand{\bv}{\begin{vmatrix}}
\newcommand{\ev}{\end{vmatrix}}
\newcommand{\no}{\noindent}
\newcommand{\g}{\mathfrak{g}}
\newcommand{\lev}{\mathfrak{l}}
\newcommand{\bb}{\mathfrak{b}}
\newcommand{\p}{\mathfrak{p}}
\newcommand{\td}{\mathfrak{t}}
\newcommand{\n}{\mathfrak{n}}
\newcommand{\h}{\mathfrak{h}}
\newcommand{\s}{\mathfrak{s}}
\newcommand{\z}{\mathfrak{z}}
\newcommand{\uu}{\mathfrak{u}}
\newcommand{\e}{\epsilon}
\newcommand{\N}{\mathcal{N}}
\newcommand{\HH}{\mathbb{H}}
\newcommand{\beq}{\begin{equation*}}
\newcommand{\eeq}{\end{equation*}}
\newcommand{\beqn}{\begin{eqnarray*}}
\newcommand{\eeqn}{\end{eqnarray*}}
\newcommand{\vs}{\vspace{10mm}}
\newcommand{\defeq}{\vcentcolon=}
\newcommand{\acts}{\circlearrowright}
\newcommand{\stdH}{\text{std}_\mathbb{H}}
\newcommand{\irrH}{\text{irr}_\mathbb{H}}
\newcommand{\stdN}{\text{std}_\mathcal{N}}
\newcommand{\irrN}{\text{irr}_\mathcal{N}}
\newcommand{\stdNg}{\text{std}_{\mathcal{N}_\g}}
\newcommand{\irrNg}{\text{irr}_{\mathcal{N}_\g}}
\theoremstyle{definition}
\newtheorem*{defi}{Definition}
\theoremstyle{plain}
\newtheorem{thm}{Theorem}[subsection]
\newtheorem*{thm*}{Theorem}

\newtheorem{lem}[thm]{Lemma}

\newtheorem{prop}[thm]{Proposition}
\theoremstyle{plain}
\newtheorem*{conj}{Conjecture}
\theoremstyle{remark}
\newtheorem{cor}[thm]{Corollary}
\theoremstyle{remark}
\newtheorem*{rem}{Remark}
\theoremstyle{remark}
\newtheorem*{ex}{Example}

\begin{abstract}
 In this paper we construct a family of exact functors from the category of Whittaker modules of the simple complex Lie algebra of type $A_n$ to the category of finite-dimensional modules of the graded affine Hecke algebra of type $A_\ell$. Using results of Backelin~\cite{Back} and of Arakawa-Suzuki~\cite{AS}, we prove that these functors map standard modules to standard modules (or zero) and simple modules to simple modules (or zero). Moreover, we show that each simple module of the graded affine Hecke algebra appears as the image of a simple Whittaker module. Since the Whittaker category contains the BGG category $\mathcal{O}$ as a full subcategory, our results generalize results of Arakawa-Suzuki~\cite{AS}, which in turn generalize Schur-Weyl duality between finite-dimensional representations of $\text{SL}_n(\mathbb{C})$ and representations of the symmetric group $S_n$.

\end{abstract}

\begin{section}{Introduction}
The geometric interpretation of coefficients of the Kazhdan-Lusztig polynomials in terms of dimensions of intersection homology groups on the flag variety~\cite{KL} was a breakthrough for the application of geometric techniques to the representation theory of semisimple complex Lie algebras. Lusztig later developed a generalization of the Springer correspondence which provided a geometric interpretation of finite-dimensional representations of the graded affine Hecke algebra~\cite{LLS2}. Surprisingly, these geometric theories can be related in a way which implies structural similarities between the irreducible characters of $\mathfrak{sl}_n(\mathbb{C})$ and of the graded affine Hecke algebra associated with the root system for $\mathfrak{sl}_n(\mathbb{C})$~\cite{Z}. Functorial interpretations of these geometric relationships have been explored by Arakawa-Suzuki~\cite{AS} and Suzuki~\cite{S} in the complex setting, and by Ciubotaru-Trapa~\cite{CTU},~\cite{CT} in the real setting. In this paper we explore a functorial interpretation of the geometric similarities between the Kazhdan-Lusztig theories for the category of Whittaker modules and graded affine Hecke algebra modules associated with $\mathfrak{sl}_n(\mathbb{C})$. In this way, we generalize the Schur-Weyl duality of Arakawa-Suzuki~\cite{AS} for category $\mathcal{O}$ to a Schur-Weyl duality for the category of Whittaker modules. 

We begin by summarizing our notation so that we can describe our main results. Returning temporarily to the setting of general complex semisimple Lie algebras, let $U(\g)$ be the universal enveloping algebra of a semisimple complex Lie algebra $\g$ and $Z(\g)$ be the center of $U(\g)$. Let $\Pi\subset \Delta^+$ be the set of simple and positive roots, respectively, corresponding to a choice of Cartan and Borel subalgebra $\h\subset\bb\subset \g$. Let $W$ be the Weyl group of $\g$ and $\n=[\bb,\bb]$ be the nilradical of $\bb$. For a root $\alpha\in\h^\ast$, let $\g_\alpha=\{x\in \g: \text{ad}(h)x=\alpha(h)x\text{ for all }h\in\h\}$, and let $\rho=\frac{1}{2}\sum_{\alpha\in\Delta^+}\alpha$ denote the half sum of positive roots. The category of Whittaker modules, denoted by $\N_\g$, consists of finitely generated $U(\g)$-modules which are locally $Z(\g)$-finite and locally $U(\n)$-finite. For $\g=\mathfrak{sl}_n(\mathbb{C})$, we define a family of exact functors from the category $\N_\g$ to the category of finite-dimensional modules for the graded affine Hecke algebra of the Coxeter system of type $A_\ell$. When $n=\ell$, we prove (under natural assumptions) that standard objects are mapped to standard objects (or zero) and irreducible objects are mapped to irreducible objects (or zero). The category $\mathcal{N}_\g$ contains the BGG category $\mathcal{O}$ as a full subcategory, and when we restrict to $\mathcal{O}$ we recover results of Arakawa-Suzuki~\cite{AS}. When we restrict to finite-dimensional $\mathfrak{sl}_n(\mathbb{C})$-modules, we recover the classical Schur-Weyl duality between finite-dimensional representations of $SL_n(\mathbb{C})$ and finite-dimensional representations of the symmetric group $S_n$. \par

We will now briefly review the notation needed to define the functor. Let $\eta\in\text{ch}\n:=(\n/[\n,\n])^\ast$ be a character of $\n$, $\Pi_\eta=\{\alpha\in\Pi:\eta\vert_{\g_\alpha}\neq 0\}$, and $W_\eta$ be the Weyl group generated by the reflections $s_\alpha$ for $\alpha\in \Pi_\eta$. Let $\p_\eta\subset\g$ be the corresponding parabolic subalgebra containing $\bb$ with ad$\h$-stable Levi decomposition $\p_\eta=\lev_\eta\oplus \n^\eta$. Let $\mathfrak{z}$ be the center of the reductive Lie algebra $\lev_\eta$ and $\s:=[\lev_\eta,\lev_\eta]$ be the semisimple part of $\lev_\eta$, so that $\lev_\eta=\z\oplus\s$. Set $\bb_\eta=\bb\cap\lev_\eta$ and let $\n_\eta$ be its nilradical. Let $Z(\lev_\eta)$ be the center of $U(\lev_\eta)$, and $\xi_\eta:\h^\ast\rightarrow\text{Max}Z(\lev_\eta)$ be induced by the relative Harish-Chandra homomorphism for $U(\lev_\eta)$. For a locally $Z(\g)$-finite $U(\g)$-module $X$, let $X^{[\lambda]}$ denote the subspace consisting of vectors with generalized $Z(\g)$-infinitesimal character corresponding to $\lambda\in\h^\ast$ via the Harish-Chandra homomorphism. Let $$\N_\g(\eta)=\{M\in\N_\g:\text{ $\forall$ $m\in M$ and $u\in\n$, $\exists$ $k\in\mathbb{Z}_{\ge 0}$ so that }(u-\eta(u))^km=0\}$$ be the full subcategory of Whittaker modules which are $U(\n)$-finite for the $\eta$-twisted action of $\n$. For a $U(\g)$-module $X\in \N_\g(\eta)$, let 
$$X_{\lambda_\z}=\{x\in X:\forall z\in\z,\quad (z-\lambda(z))^kx=0\}$$ denote the  generalized $\z$-weight space corresponding to $\lambda\in\h^\ast$ restricted to $\z\subset\h$.
Given a $U(\n_\eta)$-module $X$, define the degree zero $\eta$-twisted $\n_\eta$-cohomology of $X$ by 
$$H^0_\eta(\n_\eta,X):=\{x\in X :ux-\eta(u)x=0\quad\forall u\in\n_\eta\}.$$ Throughout the remainder of this section we will specialize to type $A$. Let $\g=\mathfrak{sl}_n(\mathbb{C})$ and $V=\mathbb{C}^n$ be the standard representation of $\g$. For a $U(\g)$-module $X\in\mathcal{N}_\g(\eta)$ we define the functor
\beq
F_{\ell,\eta,\lambda}(X):=H^0_\eta\left(\n_\eta,\left(X\otimes V^{\otimes \ell}\right)^{[\lambda]}_{\lambda_\mathfrak{z}}\right).
\eeq
Since $V$ is finite-dimensional and $X\in \N_\g(\eta)$, Theorem \ref{kt} implies that $X\otimes V^{\otimes \ell}\in\N_\g(\eta)$. Moreover, since $Z(\g)$ by definition commutes with $\z$, projection onto generalized $Z(\g)$-infinitesimal character commutes with projection onto a generalized $\z$-weight space. This shows that there is no ambiguity in the notation $\left(X\otimes V^{\otimes \ell}\right)^{[\lambda]}_{\lambda_\z}$. Additionally, Proposition \ref{ex_wt} shows that $\left( X\otimes V^{\otimes \ell}\right)^{[\lambda]}_{\lambda_\z}$ is an object in $\N_{\lev_\eta}(\eta)$. In particular, $\left( X\otimes V^{\otimes \ell}\right)^{[\lambda]}_{\lambda_\z}$ is a $U(\n_\eta)$-module with well defined $\eta$-twisted $\n_\eta$-cohomology. In Section \ref{hact}, following~\cite{AS}, we define an action of the graded affine Hecke algebra $\HH$ corresponding to the root datum for $SL_\ell$ on the $U(\g)$-module $X\otimes V^{\otimes \ell}$. The action of $\HH$ commutes with the action of $U(\g)$, and induces an $\HH$-module structure on $H^0_\eta\left(\n_\eta,\left(X\otimes V^{\otimes \ell}\right)^{[\lambda]}_{\lambda_\mathfrak{z}}\right)$. We can therefore view $F_{\ell,\eta,\lambda}$ as a functor from $\N_\g(\eta)$ to the category of $\HH$-modules. This family of functors possess a number of nice properties that we will explore in this paper and in future work. Our main result is the following (Theorem \ref{main}). 
\begin{thm*}
For $\lambda\in\h^\ast$ dominant, $F_{\ell,\eta,\lambda}$ is an exact functor from $\N_\g(\eta)$ to the category of finite-dimensional modules of the graded affine Hecke algebra corresponding to $\g$. Moreover, if $\lambda$ is a dominant integral weight such that $W_\eta=\{w\in W:w(\lambda+\rho)=\lambda+\rho\}$, and $X\in \N_\g(\eta)$ is irreducible with infinitesimal character corresponding to $\lambda$, then $F_{n,\eta,\lambda}(X)$ is irreducible or zero. 
\end{thm*}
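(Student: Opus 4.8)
The plan is to treat the two assertions separately: exactness by factoring $F_{\ell,\eta,\lambda}$ into elementary functors, and irreducibility by transporting the question to the category $\mathcal{O}$ setting of Arakawa--Suzuki via Backelin's comparison between $\N_\g(\eta)$ and $\mathcal{O}$. For exactness, write
\[
F_{\ell,\eta,\lambda} \;=\; H^0_\eta(\n_\eta,-)\,\circ\,(-)^{[\lambda]}_{\lambda_\z}\,\circ\,\bigl(-\otimes V^{\otimes\ell}\bigr).
\]
Tensoring with the finite-dimensional module $V^{\otimes\ell}$ is exact; projection onto a generalized $Z(\g)$-infinitesimal character and onto a generalized $\z$-weight space are exact, being passage to a direct summand; and by Theorem~\ref{kt} and Proposition~\ref{ex_wt} the composite of the last two functors carries $\N_\g(\eta)$ into the subcategory of $\N_{\lev_\eta}(\eta)$ on which $Z(\lev_\eta)$ acts through the single generalized central character $\xi_\eta(\lambda)$. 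On $\lev_\eta$ the character $\eta$ restricts to a \emph{principal} (nondegenerate) character of $\n_\eta$, because $\Pi_\eta$ is precisely the set of simple roots occurring in $\lev_\eta$; hence by Kostant's theorem on Whittaker modules for the reductive algebra $\lev_\eta$ (and its refinement by Backelin~\cite{Back}), the functor $H^0_\eta(\n_\eta,-)$ is exact on this subcategory and sends finitely generated modules to finite-dimensional $Z(\lev_\eta)$-modules. Since the $\HH$-action constructed in Section~\ref{hact} commutes with $U(\g)$ and therefore descends to $H^0_\eta(\n_\eta,-)$, the output is a finite-dimensional $\HH$-module and the construction is functorial; this proves the first assertion, and it should require dominance of $\lambda$ only to guarantee that $\xi_\eta(\lambda)$ is the appropriate regular character on the Levi side.

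For the second assertion, assume $\lambda$ is dominant integral with $W_\eta=\{w\in W:w(\lambda+\rho)=\lambda+\rho\}$, and let $X\in\N_\g(\eta)$ be irreducible with infinitesimal character $\lambda$. Under exactly this hypothesis, Backelin's results~\cite{Back} furnish an exact functor $\mathsf{B}_\eta\colon\mathcal{O}_\lambda\to\N_\g(\eta)$ from the block of category $\mathcal{O}$ of infinitesimal character $\lambda$, carrying Verma modules to standard Whittaker modules and such that every irreducible object of $\N_\g(\eta)$ of infinitesimal character $\lambda$ is $\mathsf{B}_\eta(L)$ for some simple $L\in\mathcal{O}_\lambda$. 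The crux of the argument is then a natural isomorphism
\[
F_{n,\eta,\lambda}\circ\mathsf{B}_\eta\;\cong\;F_{n,0,\lambda},
\]
where $F_{n,0,\lambda}$ is the $\eta=0$ instance of the functor family, i.e.\ the Arakawa--Suzuki functor~\cite{AS} on $\mathcal{O}_\lambda$. Establishing this amounts to checking that $\mathsf{B}_\eta$ commutes with $-\otimes V^{\otimes n}$ (compatibly with the $\HH$-action, which is carried by the tensor factor $V^{\otimes n}$ together with $U(\g)$-linear Casimir-type operators), commutes with the projections $(-)^{[\lambda]}$ and $(-)_{\lambda_\z}$, and intertwines honest $\n$-invariants on the $\mathcal{O}$ side with $\eta$-twisted $\n_\eta$-cohomology on the Whittaker side. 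Granting the square, $F_{n,\eta,\lambda}(X)\cong F_{n,0,\lambda}(L)$ for a simple $L\in\mathcal{O}_\lambda$, which Arakawa--Suzuki show is irreducible or zero; this gives the claim.

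The main obstacle is the commutativity of this square, and in particular its compatibility with the full $\HH$-module structure rather than merely the underlying $U(\g)$-module structure. Backelin's functor is not literally ``take $\eta$-twisted Whittaker vectors'' but is built from a completion along the center, so its commutation with $-\otimes V^{\otimes n}$ and with the infinitesimal-character decomposition has to be verified with care; I expect this to follow from finite-dimensionality of $V^{\otimes n}$ together with the fact that the generators of $\HH$ act through operators on the $V^{\otimes n}$ factor and $U(\g)$-linear elements, hence are transported by $\mathsf{B}_\eta$. The remaining comparison --- matching $H^0_\eta(\n_\eta,-)$ with the category-$\mathcal{O}$ invariants functor under $\mathsf{B}_\eta$ --- I would reduce to a computation on projective generators, or equivalently on standard objects where both sides are controlled by Theorem~\ref{kt} and the combinatorics of $\Pi_\eta$, using that on the regular block in question the Kostant/Backelin functors to $Z(\lev_\eta)$-modules are equivalences.
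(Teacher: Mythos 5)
Your argument for exactness is essentially the paper's: it factors $F_{\ell,\eta,\lambda}$ into the tensoring, central-character, $\z$-weight, and twisted-invariants functors, and rests on Proposition \ref{ex_inf}, Proposition \ref{ex_wt}, and the Kostant-type exactness of $H^0_\eta(\n_\eta,-)$ on $\N_{\lev_\eta}(\eta)$ (Lemma \ref{exact}), together with the observation that the $\HH_\ell$-action commutes with $U(\g)$, so the output is a finite-dimensional $\HH_\ell$-module and induced maps are $\HH_\ell$-linear. That part is sound.

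The irreducibility part has a genuine gap. Everything rests on the natural isomorphism $F_{n,\eta,\lambda}\circ\overline{\Gamma}_\eta\cong F_{n,0,\lambda}$ of functors on $\mathcal{O}_\lambda$, which you flag as ``the main obstacle'' but do not prove, and your sketch does not close it. Checking that the two composites agree on standard objects is precisely what the paper does (Theorem \ref{induced_morph} combined with Theorem \ref{AS} yields $F_{n,\eta,\lambda}(\overline{\Gamma}_\eta(M(\mu)))\cong F_{n,0,\lambda}(M(\mu))$), but object-level agreement on standard modules does not produce a natural transformation, let alone a natural isomorphism, and without naturality you cannot evaluate either side on a simple quotient $L$. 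Reducing instead to projective generators would require matching both composites on the indecomposable projectives $P(w\bullet\lambda)$ together with the induced maps of endomorphism algebras, which is not easier than the theorem itself. Note also that your isomorphism forces $F_{n,0,\lambda}(L(\mu))=0$ for every $\mu$ that is not $\n_\eta$-antidominant (since $\overline{\Gamma}_\eta$ kills exactly those $L(\mu)$ by Theorem \ref{back}); this is a nontrivial vanishing statement about the Arakawa--Suzuki functor that you nowhere verify. The paper closes the argument by a different mechanism: having established exactness and the computation on standard modules, it invokes Theorem \ref{mult_equal} --- the equality $[\stdN(\mathcal{Q}):\irrN(\mathcal{O})]=[\stdH(\Psi(\mathcal{Q})):\irrH(\Psi(\mathcal{O}))]$ of intersection-cohomology multiplicity matrices, obtained from Zelevinsky's stratum-preserving map $\varphi$ of Theorem \ref{aff_to_proj} --- and then inverts the unitriangular multiplicity matrix in the Grothendieck group to conclude that $F_{n,\eta,\lambda}(\irrN(\mathcal{O}))$ has the class of $\irrH(\Psi(\mathcal{O}))$ or of $0$, hence is irreducible or zero. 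Some input of this kind, either the geometric multiplicity comparison or a genuinely established commuting square, is indispensable; your proposal supplies neither.
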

We prove the above theorem by using results of Backelin~\cite{Back} to reduce the above statement to the corresponding result for category $\mathcal{O}$, which was proved by Arakawa-Suzuki~\cite{AS}. These functors provide representation theoretic interpretations of combinatorial relationships between double cosets of the symmetric group and multisegments (defined in Section \ref{multiseg-sec}) and geometric relationships between graded nilpotent classes and flag varieties (described in Section \ref{Hecke-geo-class} and in~\cite{Z}). Motivated by Suzuki's proof of Rogawski's conjecture~\cite{S}, we are interested in studying how the functors defined in this paper behave with respect to Jantzen filtrations of Whittaker modules. Additionally, it would be natural to extend our results to other classical Lie algebras. We plan to pursue these questions in future work. \par
The paper is organized as follows. In Section \ref{whit-sec}, we begin by reviewing the structure of Whittaker modules, and conclude with the multiplicities (written in terms of Kazhdan-Lusztig polynomials) of irreducible Whittaker modules in the composition series of standard Whittaker modules. In Section \ref{heck-sec}, we change directions and review the structure theory of graded affine Hecke algebras and their modules. Specifically, we review an algebraic construction of standard $\HH$-modules~\cite{E}, as well as a geometric construction~\cite{LLS2}, and a combinatorial parametrization for type $A_n$ following Zelevinsky~\cite{Z}. We conclude this section with a $p$-adic analogue of the Kazhdan-Lusztig conjectures, due independently to Lusztig and Zelevinsky~\cite{Z81}, which give multiplicity formulas for the standard graded affine Hecke algebra modules. Finally, in Section \ref{sec-main}, we study the relationships between Whittaker modules and $\HH$-modules. Initially, we construct an injective map from geometric parameters for irreducible $\HH$-modules to geometric parameters for irreducible Whittaker modules. We briefly recall the main properties of Arakawa-Suzuki functors for highest weight modules before concluding with the construction of Arakawa-Suzuki functors for Whittaker modules and our main theorem.

\noindent{\bf{Acknowledgments.}} The author would like to thank Peter Trapa and Dragan Mili\v{c}i\'c for their guidance in this project, which was completed while the author was a PhD student at the University of Utah under the supervision of Peter Trapa. 

\end{section}

\begin{section}{The category of Whittaker modules}\label{whit-sec}
We begin by reviewing the Mili\v ci\'c-Soergel~\cite{MS} construction of the category of Whittaker modules. Throughout this section, let $\g$ be a complex semisimple Lie algebra, with all of the relevant notation defined in the introduction. For $w\in W$ and $\lambda\in \h^\ast$ define the dot action of $W$ on $\h^\ast $ by 
$$
 w\bullet\lambda=w(\lambda+\rho)-\rho\quad\text{where}\quad\rho=\frac{1}{2}\sum_{\alpha\in \Delta^+}\alpha.
$$
 The Poincar\'e-Birkhoff-Witt basis theorem for $U(\g)$ relative to the triangular decomposition $\g=\bar{\n}\oplus\h\oplus\n$ gives us the vector space decomposition $U(\g)=U(\h)\oplus(\bar{\n}U(\g)+U(\g)\n)$. The Harish-Chandra homomorphism $\xi^\sharp:Z(\g)\rightarrow U(\h)$ is the projection map from $Z(\g)$ to $U(\h)$ by the above decomposition. This induces a map $\xi:\h^\ast\rightarrow \text{Max}Z(\g)$, where $\lambda\in\h^\ast$ maps to ker$\left(\lambda\circ\xi^\sharp\right)$. It is well known that $\xi(\lambda)=\xi(\mu)$ if and only if $ W\bullet\lambda=W\bullet\mu$. As in the introduction, suppose $\eta\in\text{ch}\n:=\left(\n/[\n,\n]\right)^\ast$ is a character of $\n$ with corresponding set of simple roots $\Pi_\eta=\{\alpha\in\Pi:\eta\vert_{\g_\alpha}\neq 0\}$ and Weyl group $W_\eta$  generated by the reflections $s_\alpha$ for $\alpha\in \Pi_\eta$. Let $\p_\eta\subset\g$ be the corresponding parabolic subalgebra containing $\bb$ with ad$\h$-stable Levi decomposition $\p_\eta=\lev_\eta\oplus \n^\eta$. Let $\bar{\n}^\eta$ be the orthocomplement (with respect to the Killing form) of $\p_\eta$ in $\g$, so that we have the decomposition $\g=\bar{\n}^\eta\oplus\p_\eta$. We will use $\mathfrak{z}$ to denote the center of $\lev_\eta$. Set $\bb_\eta=\bb\cap\lev_\eta$ and let $\n_\eta$ be its nilradical. Let $\xi^\sharp_\eta:Z(\lev_\eta)\rightarrow S(\h)$ be the Harish-Chandra homomorphism of $\lev_\eta$. That is, the projection $Z(\lev_\eta)$ to $ U(\h)$ induced by the decomposition $U(\lev_\eta)=U(\h)\oplus \left(\bar{\n}_\eta U(\lev_\eta)+U(\lev_\eta)\n_\eta\right)$ given by the Poincar\'e-Birkhoff-Witt basis theorem for $U(\lev_\eta)$. Let $\xi_\eta:\h^\ast\rightarrow \text{Max}Z(\lev_\eta)$ be the map induced from $\xi^\sharp_\eta$. We say that an $A$-module is locally finite if the $A$-span of any element of the module is finite-dimensional. 

We can now define the category of \emph{Whittaker modules}, denoted by $\N_\g$, to be the full subcategory of all $U(\g)$-modules which are finitely generated over $U(\g)$, locally finite over $U(\n)$, and locally finite over $Z(\g)$.\begin{rem}
We will drop the subscript $\g$ of $\N_\g$ when the context is clear. 
\end{rem}
 \begin{prop}{\label{ex_inf}}The action of $Z(\g)$ decomposes $\N$ into a direct sum 
\beq
\N=\bigoplus_{\chi\in\text{Max}Z(\g)}\N(\chi),
\eeq
where $\N(\chi)$ denotes modules in $\N$ that are annihilated by a power of $\chi$. This is to say that each object $X\in\N$ admits a decomposition
\beq
X=\bigoplus_{[\lambda]}X^{[\lambda]},
\eeq
where $X^{[\lambda]}\in \N(\xi(\lambda))$ and $\lambda$ ranges over coset representatives of $W\backslash\h^\ast$. Moreover, the functor from $\mathcal{N}$ to $\mathcal{N}(\xi(\lambda))$ which maps $X$ to $X^{[\lambda]}$ is exact.
\end{prop}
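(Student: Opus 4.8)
The plan is to run the standard decomposition of an abelian category into generalized-central-character blocks, using only two structural facts: $U(\g)$ is (left and right) Noetherian, and $Z(\g)$ is a finitely generated commutative $\mathbb{C}$-algebra whose maximal spectrum is identified with $W\backslash\h^\ast$ via the Harish-Chandra homomorphism $\xi$ (so $\xi(\lambda)=\xi(\mu)$ exactly when $W\bullet\lambda=W\bullet\mu$, as recalled above).

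First I would show that every $X\in\N$ is annihilated by an ideal of $Z(\g)$ of finite codimension. Pick a finite generating set $m_1,\dots,m_k$ of $X$ over $U(\g)$; local $Z(\g)$-finiteness makes each $Z(\g)m_i$ finite-dimensional, hence killed by an ideal $I_i\subseteq Z(\g)$ with $\dim_{\mathbb{C}}Z(\g)/I_i<\infty$, and since $Z(\g)$ is central the ideal $I:=\bigcap_i I_i$ annihilates all of $U(\g)m_i$, so $IX=0$ with $\dim_{\mathbb{C}}Z(\g)/I<\infty$. Then $A:=Z(\g)/I$ is Artinian, hence a finite product $A=\prod_{j=1}^r A_j$ of local Artinian rings; pulling the maximal ideals of $A$ back to $Z(\g)$ yields finitely many $\chi_1,\dots,\chi_r\in\text{Max}Z(\g)$ together with central orthogonal idempotents $e_1,\dots,e_r\in A$, and $X=\bigoplus_j e_jX$ with $e_jX=\{x\in X:\chi_j^N x=0\text{ for some }N\}$ (here the inclusion $\supseteq$ uses that distinct maximal ideals of $Z(\g)$ are comaximal, so $\chi_j$ acts invertibly on $e_{j'}X$ for $j'\neq j$). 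Writing $\chi_j=\xi(\lambda_j)$ and $X^{[\lambda_j]}:=e_jX$, each $X^{[\lambda_j]}$ is a $U(\g)$-submodule (the $e_j$ are central), it inherits local $U(\n)$- and $Z(\g)$-finiteness, and it is finitely generated because it is a quotient of the finitely generated module $X$ over the Noetherian ring $U(\g)$; hence $X^{[\lambda_j]}\in\N(\xi(\lambda_j))$, and relabelling the $\lambda_j$ as coset representatives of $W\backslash\h^\ast$ gives the asserted decomposition of objects, with only finitely many summands nonzero.

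Next I would verify that $\operatorname{Hom}_\N(X,Y)=0$ whenever $X\in\N(\chi)$ and $Y\in\N(\chi')$ with $\chi\neq\chi'$: choosing $z\in\chi\setminus\chi'$, a morphism $f$ satisfies $z^N f(x)=f(z^N x)=0$ for $x\in X$ and suitable $N$, while $z$ acts invertibly on $Y$ (on each layer $\{y\in Y:\chi'^m y=0\}$ it is a unit in $Z(\g)/\chi'^m$ since $z\notin\chi'$ and $\chi'/\chi'^m$ is nilpotent), forcing $f(x)=0$. Combined with the object decomposition this gives $\N=\bigoplus_\chi\N(\chi)$. Finally, for exactness of $X\mapsto X^{[\lambda]}$: any short exact sequence $0\to X'\to X\to X''\to 0$ in $\N$ respects this decomposition (if $\chi^N x=0$ then $\chi^N$ kills the image of $x$), so it is the direct sum over coset representatives $[\lambda]$ of the sequences $0\to X'^{[\lambda]}\to X^{[\lambda]}\to X''^{[\lambda]}\to 0$; a direct sum of short complexes is exact if and only if each summand is, so each of these is exact, which is precisely the exactness of the projection functor.

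I do not expect a genuine obstacle: once the finite-codimension annihilator is produced, the rest is formal commutative algebra together with the Schur-type orthogonality of blocks. The only points requiring a little care are the finite generation of the summands $X^{[\lambda]}$ (where Noetherianity of $U(\g)$ enters) and the identification of $e_jX$ with the $\chi_j$-nilpotent part of $X$ (where comaximality of distinct maximal ideals of $Z(\g)$ is used).
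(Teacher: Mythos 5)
Your argument is correct; the paper states this proposition without proof, treating it as the standard block decomposition for locally $Z(\g)$-finite, finitely generated modules (going back to Mili\v ci\'c--Soergel), and your proof is exactly the standard one: a finite-codimension annihilating ideal from finite generation plus local $Z(\g)$-finiteness, the idempotent decomposition of the Artinian quotient $Z(\g)/I$, Hom-orthogonality of blocks via an element of $\chi\setminus\chi'$ acting nilpotently on the source and invertibly on the target, and exactness from the compatibility of morphisms with the generalized eigenspace decomposition. The only cosmetic remark is that Noetherianity of $U(\g)$ is not actually needed for the finite generation of $X^{[\lambda]}$: a direct summand is a quotient, and quotients of finitely generated modules are always finitely generated.
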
 
The action of $\n$ gives us a decomposition
\beq
\N=\bigoplus_{\eta\in\text{ch}\n}\N(\eta),
\eeq
where $\N(\eta)=\{M\in\N:\text{ $\forall$ $m\in M$ and $u\in\n$, $\exists$ $k\in\mathbb{Z}_{\ge 0}$ so that }(u-\eta(u))^km=0\}$. Combining these decompositions, we have
\beq
\N=\bigoplus_{\chi,\eta}\N(\chi,\eta),
\eeq
where $N(\chi,\eta)=\N(\chi)\cap\N(\eta)$. For each choice $\chi\in\text{Max}Z(\g)$ and $\eta\in \text{ch}\n$, we see that the category $\N(\chi,\eta)$ contains the module $U(\g)/\chi U(\g)\otimes_{U(\n)}\mathbb{C}_\eta$. Therefore each category $\N(\chi,\eta)$ is nonzero. Moreover, each simple object $L$ in $\N$ is contained in $\N(\chi,\eta)$ for some choice $\chi$ and $\eta$ depending on $L$. 
\begin{prop}{\cite[Section 1]{MS}}{\label{se}}
The categories $\mathcal{N}(\chi,\eta)$ are closed under subquotients and extensions in the category of $U(\g)$-modules. 
\end{prop}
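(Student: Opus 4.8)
The plan is to verify, condition by condition, that each property defining $\N(\chi,\eta)=\N(\chi)\cap\N(\eta)$ is stable under passage to subquotients and to extensions inside the category of all $U(\g)$-modules; the only input beyond bookkeeping is that $U(\g)$, $U(\n)$ and $Z(\g)$ are Noetherian rings. Recall that $M\in\N(\chi,\eta)$ means: (i) $M$ is finitely generated over $U(\g)$; (ii) $M$ is locally finite over $U(\n)$; (iii) $\chi^{k}M=0$ for some $k\ge 0$, which in particular forces $M$ to be locally $Z(\g)$-finite since $Z(\g)/\chi^{k}$ is finite-dimensional over $\mathbb{C}$; and (iv) for every $m\in M$ and $u\in\n$ there is $k=k(m,u)$ with $(u-\eta(u))^{k}m=0$.

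Closure under subquotients I would dispatch first, as it is nearly automatic. Given $0\to N\to M\to M/N\to 0$ with $M$ satisfying (i)--(iv): $N$ is finitely generated because $U(\g)$ is left Noetherian, and $M/N$ is finitely generated as a quotient of $M$; local finiteness over $U(\n)$ and over $Z(\g)$ is inherited by submodules and quotients; any power of $\chi$ that annihilates $M$ annihilates $N$ and $M/N$; and condition (iv) passes to $N$ by restriction and to $M/N$ via $(u-\eta(u))^{k}\overline{m}=\overline{(u-\eta(u))^{k}m}$.

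The extension case is where the Noetherian hypotheses do the real work, and it is the only step I expect to require any thought. Let $0\to M'\to M\to M''\to 0$ with $M',M''\in\N(\chi,\eta)$. For (i), lift a finite generating set of $M''$ to $M$ and adjoin a finite generating set of $M'$. For (iii), if $\chi^{j}M'=0$ and $\chi^{k}M''=0$ then $\chi^{k}M\subseteq M'$ and hence $\chi^{j+k}M=0$. For (iv), given $m\in M$ and $u\in\n$, pick $k$ with $(u-\eta(u))^{k}\overline{m}=0$ in $M''$, so that $(u-\eta(u))^{k}m\in M'$; then pick $j$ annihilating that element in the twisted sense, so $(u-\eta(u))^{j+k}m=0$. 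For (ii), fix $m\in M$: the cyclic module $U(\n)m$ is Noetherian because $U(\n)$ is, its image in $M''$ is the finite-dimensional space $U(\n)\overline{m}$, and the kernel $U(\n)m\cap M'$ is a finitely generated $U(\n)$-submodule of the locally $U(\n)$-finite module $M'$, hence finite-dimensional; therefore $U(\n)m$ is finite-dimensional. The same argument with $Z(\g)$ in place of $U(\n)$ reproves local $Z(\g)$-finiteness directly, if one does not wish to deduce it from (iii). Combining these verifications yields the proposition; the only mild obstacle is the local-finiteness argument for extensions, which reduces entirely to Noetherianity, and no structural facts about Whittaker modules beyond their definition are needed.
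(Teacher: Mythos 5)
Your proof is correct and complete. The paper itself gives no argument for this proposition --- it is quoted from Mili\v{c}i\'c--Soergel --- and your verification (Noetherianity of $U(\g)$ and $U(\n)$ for finite generation and local finiteness, the two-step nilpotency argument for the $\chi$- and $\eta$-twisted conditions in an extension) is exactly the standard bookkeeping that the cited reference relies on, so there is nothing to add.
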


\begin{thm}{\cite{M},\cite[Theorem 2.5]{MS95}}
Every object $X\in\mathcal{N}$ has finite length. 
\end{thm}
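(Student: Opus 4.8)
The plan is to reduce the statement to a single block $\N(\chi,\eta)$, and there to realize every object as a quotient of a finite direct sum of standard Whittaker modules; the real content is that these standard modules have finite length.

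First I would reduce to a block. Since $X$ is finitely generated over $U(\g)$ and locally $Z(\g)$-finite, only finitely many generalized infinitesimal characters can occur in it, so Proposition~\ref{ex_inf} lets me assume $X\in\N(\xi(\lambda))$. For the decomposition $X=\bigoplus_\eta X(\eta)$ under the twisted $\n$-action, I claim each summand is already a $U(\g)$-submodule: in $U(\g)$ one has the identity $(u-\eta(u))^k y=\sum_{j\ge 0}\binom{k}{j}\bigl(\mathrm{ad}(u)^j y\bigr)(u-\eta(u))^{k-j}$ for $y\in\g$ and $u\in\n$, and since every element of $\n$ is $\mathrm{ad}$-nilpotent we have $\mathrm{ad}(u)^j y=0$ for $j$ large, while for $v\in X(\eta)$ the terms with small $j$ annihilate $v$ once $k$ is large; hence $(u-\eta(u))^k(yv)=0$, so $yv\in X(\eta)$. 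Finite generation then forces the sum $\bigoplus_\eta X(\eta)$ to be finite, and we are reduced to a single $\N(\chi,\eta)$.

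Next I would bring in Kostant's theory of nondegenerate Whittaker modules: the restriction of $\eta$ to the nilradical $\n_\eta$ of $\bb\cap\lev_\eta$ is nondegenerate, so the category of nondegenerate Whittaker $\lev_\eta$-modules has finite-length objects, with simple objects $Y(\mu)$, each generated by a one-dimensional space of Whittaker vectors. Inflating $Y(\mu)$ to $\p_\eta$ trivially on $\n^\eta$ and inducing produces McDowell's standard Whittaker modules $M(\mu,\eta):=U(\g)\otimes_{U(\p_\eta)}Y(\mu)\in\N(\chi,\eta)$. Given $X\in\N(\chi,\eta)$, I would pick a finite-dimensional $U(\n)$-submodule $F$ generating $X$ over $U(\g)$; since $\n^\eta$ acts locally nilpotently, $U(\n^\eta)F$ is finite-dimensional, so $U(\p_\eta)F$ is a $\p_\eta$-submodule that, viewed over $\lev_\eta$ (after splitting off generalized $\z$-weights if necessary), is a finitely generated nondegenerate Whittaker $\lev_\eta$-module, hence of finite length, hence a quotient of some $\bigoplus_i Y(\mu_i)$. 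Applying the right-exact functor $U(\g)\otimes_{U(\p_\eta)}(-)$ and using $X=U(\bar\n^\eta)\cdot U(\p_\eta)F$ then exhibits $X$ as a quotient of $\bigoplus_i M(\mu_i,\eta)$; so if each $M(\mu_i,\eta)$ has finite length, so does $X$.

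The hard part is precisely this finite length of the standard modules. Because Whittaker modules carry no weight-space grading, the usual category-$\mathcal{O}$ argument (finite-dimensional weight spaces, weights bounded above, and Harish-Chandra's finiteness theorem for a fixed central character) does not apply directly. I see three ways around it: follow McDowell~\cite{M} and use the explicit Kostant structure of $Y(\mu)$ to control the submodule lattice of $M(\mu,\eta)$ by hand; or transport finiteness from category $\mathcal{O}$ for $\g$, under which $M(\mu,\eta)$ corresponds to a module with the same number of composition factors as a parabolic Verma module; or --- cleanest, once it is available --- invoke the contravariant duality on $\N(\chi,\eta)$ of Mili\v{c}i\'{c}--Soergel~\cite{MS95}: every object of $\N(\chi,\eta)$ is Noetherian as a $U(\g)$-module and submodules of objects are again objects (Proposition~\ref{se}), the duality converts ascending chains of submodules of $\mathbb{D}M$ into descending chains of submodules of $M$, so every object is also Artinian, and Noetherian together with Artinian is finite length --- a route which in fact bypasses the construction of the standard modules altogether.
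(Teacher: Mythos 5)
Your route is genuinely different from the one the paper takes. The paper's proof is itself only a sketch, but it goes through geometry: by Mili\v{c}i\'{c}--Soergel \cite{MS95}, the Beilinson--Bernstein localization of an object of $\N$ with fixed generalized infinitesimal character is a holonomic $\mathcal{D}$-module, and holonomicity immediately gives finite length. You instead follow the algebraic route of McDowell \cite{M}: split off blocks $\N(\chi,\eta)$, realize every object as a quotient of a finite direct sum of standard modules $\stdN(\mu,\eta)$, and reduce to finite length of the standard modules. Your block reduction is correct (the commutator identity showing each $\eta$-primary component is a $U(\g)$-submodule is the standard one), and the surjection from a finite sum of standard modules is sound in outline, although you should justify that $U(\p_\eta)F$ is locally $Z(\lev_\eta)$-finite before invoking Kostant: this follows from the local $Z(\g)$-finiteness of $X$ because $Z(\lev_\eta)$ is a finite module over the image of $Z(\g)$ under the relative Harish-Chandra map, but it is not automatic. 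The trade-off is the usual one: the geometric argument is short but imports the localization machinery wholesale, while the algebraic one is elementary but must confront the submodule structure of $\stdN(\mu,\eta)$ directly.

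That confrontation is where your plan has a genuine gap: you correctly isolate the finite length of the standard modules as the hard step, but none of your three ways of closing it is carried out, and the third is not actually available. There is no off-the-shelf contravariant duality on $\N(\chi,\eta)$ in \cite{MS95}; dualities on Whittaker categories are constructed \emph{using} finite length or holonomicity, so as stated that route is circular. The first route is viable --- it is McDowell's original argument, which controls submodules of $\stdN(\mu,\eta)$ through the decomposition of Theorem \ref{mcd} into $\z$-weight spaces of the form (finite-dimensional space) $\otimes\, Y(\xi_\eta(\lambda),\eta)$, each of finite length over $U(\lev_\eta)$ by Theorem \ref{kt} --- but it is a substantial argument, not a remark. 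The second route can also be made precise via Backelin's exact functor $\overline{\Gamma}_\eta$ of Theorem \ref{back}, which carries $M(\lambda)$ to $\stdN(\lambda,\eta)$ and simples to simples or zero, provided one checks that the construction of $\overline{\Gamma}_\eta$ does not itself presuppose finite length in $\N$. As written, your proof stops exactly where the real work begins.
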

\begin{proof}[Sketch of Proof]
While McDowell proved this theorem algebraically in \cite{M}, this fact follows easily from the geometric perspective of \cite{MS95}. Mili\v ci\'c and Soergel show that the Beilinson and Bernstein localization of an object $X\in\N$ with infinitesimal character $\xi(\lambda)$ is a holonomic $\mathcal{D}_\lambda$-module. Since we are assuming that each object in $\N$ is finitely generated and locally $Z(\g)$-finite, it follows that each object in $\N$ has finite length.
\end{proof}

\begin{subsection}{Classification of irreducible Whittaker modules}
In this section we will review the classification of standard and irreducible Whittaker modules \cite{M}. This algebraic classification generalizes the theory of Verma modules, and uses induction to define standard objects which have unique irreducible quotients. We will begin by defining the modules studied in \cite{K78} for the Lie algebra $\lev_\eta$. For any ideal $I\subset Z(\lev_\eta)$ define the $U(\lev_\eta)$-module
\beq
Y(I,\eta)=U(\lev_\eta)/IU(\lev_\eta)\otimes_{U(\n_\eta)}\mathbb{C}_\eta.
\eeq
Kostant showed that $Y(I,\eta)$ is irreducible for each $I\in \text{Max}Z(\lev_\eta)$. When $\eta=0$, $\lev_\eta=\h$ and $Y(I,\eta)$ is a one dimensional $\h$-module with weight $\lambda\in\h^\ast$ corresponding to the maximal ideal $I\in\text{Max}S(\h)$. Following the classical theory of generalized Verma modules, we define a $U(\g)$-module $\stdN(\lambda,\eta)$ by induction from $\lev_\eta$ to $\g$. Recall that $\xi_\eta(\lambda)\in \text{Max}Z(\lev_\eta)$ is induced from the relative Harish-Chandra homomorphism on $U(\lev_\eta)$. Define the standard Whittaker module corresponding to the pair $(\lambda,\eta)$ by
\beq
\stdN(\lambda,\eta):=U(\g)\otimes_{U(\p_\eta)}Y(\xi_\eta(\lambda),\eta),
\eeq
where we extend the $\lev_\eta$ action on $Y$ to an action of $\p_\eta$ by letting $\n^\eta$ act trivially. 
\begin{prop}{\cite[Proposition 2.1]{MS}}{\label{stdstruct}}
\beqn
&(1)& \text{$\stdN(\lambda,\eta)\cong \stdN(\mu,\eta)$ if and only if $W_\eta\bullet\lambda=W_\eta\bullet\mu$}\\
&(2)& \text{$\stdN(\lambda,\eta)$ has a unique simple quotient, denoted $\irrN(\lambda,\eta)$}\\
&(3)& \text{Ann$_{U(\g)}\stdN(\lambda,\eta)=\xi(\lambda)U(\g)$}
\eeqn

\end{prop}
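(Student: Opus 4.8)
The plan is to extract from the definition $\stdN(\lambda,\eta)=U(\g)\otimes_{U(\p_\eta)}Y(\xi_\eta(\lambda),\eta)$ a canonical ``top'' isomorphic to the inducing module $Y:=Y(\xi_\eta(\lambda),\eta)$, and then to run the classical generalized-Verma-module arguments on it. By the Poincar\'e--Birkhoff--Witt theorem for $\g=\bar{\n}^\eta\oplus\p_\eta$, the module $\stdN(\lambda,\eta)$ is free over $U(\bar{\n}^\eta)$, with $\stdN(\lambda,\eta)\cong U(\bar{\n}^\eta)\otimes_{\mathbb{C}}Y$ as a left $U(\bar{\n}^\eta)$-module; in particular it is nonzero. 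Since $\z$ is central in $\lev_\eta$ it acts on $Y$ by the scalar $\lambda|_\z$ (the restriction of $\lambda$ to $\z$) and on $U(\bar{\n}^\eta)$ by the adjoint action, so $\z$ acts semisimply and $\stdN(\lambda,\eta)=\bigoplus_\nu\stdN(\lambda,\eta)_{\nu_\z}$, where $\nu$ ranges over $\lambda|_\z$ minus the semigroup generated by the restrictions to $\z$ of the roots of $\n^\eta$. Because $\lev_\eta$ is the centralizer of $\z$, each such root is nonzero on $\z$, and evaluating against the element $H_0\in\z$ with $\alpha(H_0)=1$ for $\alpha\in\Pi\setminus\Pi_\eta$ and $\alpha(H_0)=0$ for $\alpha\in\Pi_\eta$ shows that no nonempty sum of them restricts to $0$ on $\z$. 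Hence $\lambda|_\z$ is the unique maximal $\z$-weight, the corresponding weight space is exactly $1\otimes Y$, which is an $\lev_\eta$-submodule isomorphic to $Y$ (irreducible over $\lev_\eta$ by Kostant's theorem) generating $\stdN(\lambda,\eta)$ over $U(\g)$. I will call $1\otimes Y$ the top of $\stdN(\lambda,\eta)$.

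For part (1): if $W_\eta\bullet\lambda=W_\eta\bullet\mu$, then $\xi_\eta(\lambda)=\xi_\eta(\mu)$, since the Harish-Chandra homomorphism of $\lev_\eta$ separates the $W_\eta$-orbits for the dot action defined with the half-sum $\rho_{\lev_\eta}$ of positive roots of $\lev_\eta$, and on $W_\eta$ that dot action agrees with the one defined using $\rho$ because $\rho-\rho_{\lev_\eta}$ is $W_\eta$-fixed. Applying the exact induction functor $U(\g)\otimes_{U(\p_\eta)}(-)$ then gives $\stdN(\lambda,\eta)\cong\stdN(\mu,\eta)$. Conversely, any isomorphism $\stdN(\lambda,\eta)\xrightarrow{\ \sim\ }\stdN(\mu,\eta)$ is $\z$-equivariant, hence matches $\z$-weight spaces; comparing maximal weights forces $\lambda|_\z=\mu|_\z$, and restricting the isomorphism to the tops yields an $\lev_\eta$-isomorphism $Y(\xi_\eta(\lambda),\eta)\cong Y(\xi_\eta(\mu),\eta)$, which forces $\xi_\eta(\lambda)=\xi_\eta(\mu)$ on comparing $Z(\lev_\eta)$-infinitesimal characters, i.e. $W_\eta\bullet\lambda=W_\eta\bullet\mu$. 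For part (2): every object of $\N$ has finite length, so $\stdN(\lambda,\eta)$ has maximal submodules, and it suffices to check that the sum of all proper submodules is proper. Every submodule $N$ is $\z$-stable, hence $\z$-graded, and $N_{(\lambda|_\z)}$ is an $\lev_\eta$-submodule of the irreducible top; it must be $0$ when $N$ is proper, since otherwise $N$ contains the top and hence equals $\stdN(\lambda,\eta)$. Therefore the sum of all proper submodules has vanishing $\lambda|_\z$-component and is proper; it is the unique maximal submodule, with simple quotient $\irrN(\lambda,\eta)$.

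For part (3), the inclusion $\xi(\lambda)U(\g)\subseteq\operatorname{Ann}_{U(\g)}\stdN(\lambda,\eta)$ says that $\stdN(\lambda,\eta)$ has $Z(\g)$-infinitesimal character $\xi(\lambda)$; this is the standard fact that parabolic induction transports infinitesimal characters (if $E$ is a $U(\lev_\eta)$-module with $Z(\lev_\eta)$-character $\theta$, then $U(\g)\otimes_{U(\p_\eta)}E$ has $Z(\g)$-character $\theta\circ\gamma_{\p_\eta}$, for the parabolic Harish-Chandra map $\gamma_{\p_\eta}\colon Z(\g)\to Z(\lev_\eta)$), combined with $\theta=\xi_\eta(\lambda)$ for $E=Y$ (Kostant) and the transitivity identity $\xi^\sharp=\xi^\sharp_\eta\circ\gamma_{\p_\eta}$, valid for the unshifted projections in use here. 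For the reverse inclusion, set $J=\operatorname{Ann}_{U(\g)}\stdN(\lambda,\eta)$. Writing $u\in U(\g)$ in PBW form $u=\sum_i X_ip_i$ with $\{X_i\}$ a basis of $U(\bar{\n}^\eta)$ and $p_i\in U(\p_\eta)$, one has $u\cdot(1\otimes y)=\sum_i X_i\otimes(p_iy)$, so $u$ kills the top exactly when each $p_i$ lies in $\operatorname{Ann}_{U(\p_\eta)}Y$, which by Kostant's computation $\operatorname{Ann}_{U(\lev_\eta)}Y=\xi_\eta(\lambda)U(\lev_\eta)$ is the preimage of $\xi_\eta(\lambda)U(\lev_\eta)$ under $U(\p_\eta)\twoheadrightarrow U(\lev_\eta)$. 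Propagating this constraint through all $\z$-weight components of $\stdN(\lambda,\eta)$ --- equivalently, identifying $J$ with the largest two-sided ideal contained in the left ideal of elements killing the top --- one recovers $J=\xi(\lambda)U(\g)$, in exact parallel with the classical identity $\operatorname{Ann}_{U(\g)}M(\lambda)=\xi(\lambda)U(\g)$ for Verma modules.

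The technical heart, and the step I expect to be the main obstacle, is this last reduction in part (3): one cannot conclude by restricting to the top alone, since the left ideal of elements killing $1\otimes Y$ already contains $U(\g)\n^\eta$ and is strictly larger than $\xi(\lambda)U(\g)$, so the argument must genuinely use the interaction of a two-sided ideal with all $\z$-weight components, together with Kostant's faithfulness of $Y$ over the primitive quotient $U(\lev_\eta)/\xi_\eta(\lambda)U(\lev_\eta)$. The remaining verifications --- the scalar $\z$-action on $Y$, the $\lev_\eta$-equivariance of the $\z$-weight spaces, the dot-action bookkeeping for $W_\eta$, and the Harish-Chandra transitivity identity --- are routine.
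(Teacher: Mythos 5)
Your arguments for (1) and (2) are correct and essentially complete: the identification of the top $1\otimes Y$ as the unique extremal $\z$-weight space (via evaluation against $H_0\in\z$), its irreducibility over $\lev_\eta$ by Kostant's theorem, the standard consequence that every proper submodule has zero component in the top, and the dot-action bookkeeping via the $W_\eta$-invariance of $\rho-\rho_{\lev_\eta}$ all go through. Note that the paper itself gives no proof of this proposition --- it is quoted verbatim from Mili\v{c}i\'c--Soergel --- so the only meaningful comparison is internal to your argument.

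The genuine gap is exactly where you locate it: the inclusion $\operatorname{Ann}_{U(\g)}\stdN(\lambda,\eta)\subseteq\xi(\lambda)U(\g)$ in (3). Your reduction --- the annihilator is the largest two-sided ideal contained in the left ideal $U(\bar{\n}^\eta)\cdot\operatorname{Ann}_{U(\p_\eta)}Y$ killing the top --- is correct but is only a restatement of the problem; ``propagating the constraint through the $\z$-weight components'' supplies no mechanism for why that largest two-sided ideal is not strictly bigger than $\xi(\lambda)U(\g)$. Worse, the ``classical identity'' you invoke as a template, $\operatorname{Ann}_{U(\g)}M(\lambda)=\xi(\lambda)U(\g)$, is Duflo's theorem on annihilators of Verma modules, itself a deep result --- and it is literally the case $\eta=0$ of the statement you are proving, since $\stdN(\lambda,0)=M(\lambda)$. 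At the other extreme ($\eta$ nondegenerate) the statement is Kostant's faithfulness theorem for $U(\g)/\xi(\lambda)U(\g)$ acting on $Y(\xi(\lambda),\eta)$, also nontrivial. So any honest proof of (3) must contain or cite a result at least as strong as Duflo's theorem; your sketch does neither for the intermediate cases. To close the gap you would need to import the annihilator computation for parabolically induced modules (combining Kostant's $\operatorname{Ann}_{U(\lev_\eta)}Y=\xi_\eta(\lambda)U(\lev_\eta)$ with a Duflo/Conze-Berline--Duflo type theorem on induced ideals), or simply cite the proposition as the paper does.
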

\begin{prop}{\label{cs_k}}{\cite[Corollary 2.5]{MS}}
For all $\lambda\in\h^\ast$ the module $U(\g)/\xi(\lambda)U(\g)\otimes_{U(\n)}\mathbb{C}_\eta\in\N(\xi(\lambda),\eta)$ admits a filtration with subquotients isomorphic to $\stdN(w\bullet\lambda,\eta)$, where $w$ ranges of coset representatives of $W_\eta\backslash W$. 
\end{prop}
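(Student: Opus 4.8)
The plan is to reduce the assertion, by induction in stages, to a statement about $Z(\lev_\eta)$ regarded as a module over $Z(\g)$ via the relative Harish-Chandra homomorphism, and then to invoke Kostant's structure theory for the reductive Lie algebra $\lev_\eta$. Set $\tilde Y:=U(\lev_\eta)\otimes_{U(\n_\eta)}\mathbb{C}_\eta$, regarded as a $U(\p_\eta)$-module on which $\n^\eta$ acts by zero (legitimate since $\eta$ vanishes on $\n^\eta$), and let $\phi\colon Z(\g)\to Z(\lev_\eta)$ be the relative Harish-Chandra homomorphism, i.e.\ the composite $Z(\g)\hookrightarrow U(\p_\eta)\twoheadrightarrow U(\lev_\eta)$; its image lies in $Z(\lev_\eta)$, and Proposition~\ref{stdstruct}(3) forces $\phi^{-1}(\xi_\eta(\mu))=\xi(\mu)$ for every $\mu\in\h^\ast$. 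Under the Harish-Chandra isomorphisms $Z(\g)\cong S(\h)^W$ and $Z(\lev_\eta)\cong S(\h)^{W_\eta}$, the map $\phi$ becomes the inclusion $S(\h)^W\hookrightarrow S(\h)^{W_\eta}$.

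\emph{Step 1 (induction in stages).} Since $\eta|_{\n^\eta}=0$, a Poincar\'e--Birkhoff--Witt argument identifies $U(\p_\eta)\otimes_{U(\n)}\mathbb{C}_\eta\cong\tilde Y$ as $\p_\eta$-modules, hence $Q_\eta:=U(\g)\otimes_{U(\n)}\mathbb{C}_\eta\cong U(\g)\otimes_{U(\p_\eta)}\tilde Y$. The $Z(\lev_\eta)$-action on $\tilde Y$ commutes with $U(\p_\eta)$, so it induces an action of $Z(\lev_\eta)$ on $Q_\eta$ commuting with $U(\g)$; on the quotient $Q_\eta/\xi_\eta(\mu)Q_\eta\cong U(\g)\otimes_{U(\p_\eta)}Y(\xi_\eta(\mu),\eta)=\stdN(\mu,\eta)$ the operator $z\in Z(\g)$ and the induced operator $\phi(z)$ act by one and the same scalar. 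I then want to conclude that $z$ and $\phi(z)$ act equally on all of $Q_\eta$. For this I use Kostant's theorem~\cite{K78} (applicable because $\eta|_{\n_\eta}$ is a nondegenerate character of $\n_\eta$), according to which $\tilde Y$ is a free $Z(\lev_\eta)$-module; then $Q_\eta$, a direct sum of copies of $\tilde Y$ as a $Z(\lev_\eta)$-module, is free over $Z(\lev_\eta)$ as well, and since $Z(\lev_\eta)$ is a reduced finitely generated $\mathbb{C}$-algebra one has $\bigcap_{\mathfrak m}\mathfrak m Q_\eta=0$ (intersection over all maximal ideals $\mathfrak m$). Combined with the previous sentence this gives $(z-\phi(z))Q_\eta=0$. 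Consequently $\xi(\lambda)Q_\eta=\phi(\xi(\lambda))Q_\eta=U(\g)\otimes_{U(\p_\eta)}(\phi(\xi(\lambda))\tilde Y)$, and exactness of $U(\g)\otimes_{U(\p_\eta)}(-)$ (since $U(\g)$ is free over $U(\p_\eta)$) yields
\beq
U(\g)/\xi(\lambda)U(\g)\otimes_{U(\n)}\mathbb{C}_\eta\;=\;Q_\eta/\xi(\lambda)Q_\eta\;\cong\;U(\g)\otimes_{U(\p_\eta)}\bigl(\tilde Y/\phi(\xi(\lambda))\tilde Y\bigr).
\eeq

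\emph{Step 2 (the center, and transport of filtrations).} Because $S(\h)^{W_\eta}$ is Cohen--Macaulay and module-finite over the polynomial ring $S(\h)^W$ of the same Krull dimension, it is free over $S(\h)^W$, of rank $|W/W_\eta|=|W_\eta\backslash W|$. Hence $Z(\lev_\eta)/\phi(\xi(\lambda))Z(\lev_\eta)$ is an Artinian $\mathbb{C}$-algebra of dimension $|W_\eta\backslash W|$ whose maximal ideals are exactly the $\xi_\eta(w\bullet\lambda)$, and a local-degree computation (using this freeness) identifies its composition factors, counted with multiplicity, as the one-dimensional modules $Z(\lev_\eta)/\xi_\eta(w\bullet\lambda)$ with $w$ running over coset representatives of $W_\eta\backslash W$. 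Applying the exact functor $\tilde Y\otimes_{Z(\lev_\eta)}(-)$ (exact because $\tilde Y$ is free over $Z(\lev_\eta)$) to a composition series of this algebra produces a filtration of $\tilde Y/\phi(\xi(\lambda))\tilde Y$ whose subquotients are the modules $\tilde Y/\xi_\eta(w\bullet\lambda)\tilde Y=Y(\xi_\eta(w\bullet\lambda),\eta)$.

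\emph{Step 3 (conclusion).} Applying the exact functor $U(\g)\otimes_{U(\p_\eta)}(-)$ to the filtration of Step~2, using the isomorphism of Step~1 and the definition $\stdN(w\bullet\lambda,\eta)=U(\g)\otimes_{U(\p_\eta)}Y(\xi_\eta(w\bullet\lambda),\eta)$, exhibits the desired filtration of $U(\g)/\xi(\lambda)U(\g)\otimes_{U(\n)}\mathbb{C}_\eta$ with subquotients $\stdN(w\bullet\lambda,\eta)$, $w$ ranging over coset representatives of $W_\eta\backslash W$. The step I expect to be the crux is the second half of Step~1---showing that the $Z(\g)$-action on the induced module $Q_\eta$ is computed by the relative Harish-Chandra homomorphism---since this is exactly where Kostant's freeness theorem is needed and where one must take care to distinguish the two-sided ideal $\xi(\lambda)U(\g)$ from the left ideal $U(\g)\phi(\xi(\lambda))$; the remaining ingredients---Poincar\'e--Birkhoff--Witt, the Chevalley--Shephard--Todd theorem, and Cohen--Macaulayness of rings of invariants---are standard.
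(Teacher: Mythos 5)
The paper offers no proof here beyond the citation of \cite{MS}, and your argument is a correct reconstruction of the standard one of McDowell and Mili\v{c}i\'{c}--Soergel: induction in stages through $\p_\eta$, Kostant's freeness of $U(\lev_\eta)\otimes_{U(\n_\eta)}\mathbb{C}_\eta$ over $Z(\lev_\eta)$, and freeness of $S(\h)^{W_\eta}$ over $S(\h)^{W}$ of rank $|W_\eta\backslash W|$, with the local multiplicity $|W_\eta y W_\lambda|/|W_\eta|$ at $\xi_\eta(y\bullet\lambda)$ accounting exactly once for each coset in $W_\eta\backslash W$. Two small remarks: the identity $(z-\phi(z))Q_\eta=0$ follows directly from centrality (a Poincar\'e--Birkhoff--Witt and $\z$-weight argument gives $z\in U(\lev_\eta)\oplus U(\g)\n^\eta$, whence $z(x\otimes v)=xz\otimes v=x\otimes\phi(z)v$), which bypasses both the freeness/Jacobson-radical detour and the slightly circular appeal to Proposition~\ref{stdstruct}(3) for $\phi^{-1}(\xi_\eta(\mu))=\xi(\mu)$ --- the honest input there is the transitivity $\xi^\sharp=\xi^\sharp_\eta\circ\phi$ of Harish-Chandra homomorphisms, which you do state.
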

We can conclude from Proposition \ref{cs_k} and Proposition \ref{se} that $\stdN(\lambda,\eta)\in\N(\xi(\lambda),\eta)$. If $\eta=0$, then $\stdN(\lambda,0)$ is the usual Verma module with highest weight $\lambda$, and $\N(\xi(\lambda),\eta=0)$ contains the BGG category $\mathcal{O}_\lambda$ with generalized infinitesimal character $\xi(\lambda)$. 
\begin{rem}
When we want to emphasize the Lie algebra $\g$ for which we are considering Whittaker modules, we will use the notation $\stdNg(\lambda,\eta)=U(\g)\otimes_{\p_\eta}Y_{\lev_\eta}(\xi_\eta(\lambda),\eta)$. 
\end{rem}  
\begin{thm}{\cite[Proposition 2.4]{M}}{\label{mcd}} As $U(\lev_\eta)$-modules,
\beq
\stdN(\lambda,\eta)\cong U(\bar{\n}^\eta)\otimes_\mathbb{C}Y(\xi_\eta(\lambda),\eta).
\eeq
Moreover, the center of $\lev_\eta$ (denoted $\mathfrak{z}$) acts semisimply on $\stdN(\lambda,\eta)$, and the $\mathfrak{z}$-weight spaces $U(\bar{\n}^\eta)_{\gamma_\mathfrak{z}}$ are finite-dimensional $U(\lev_\eta)$-modules.  
\end{thm}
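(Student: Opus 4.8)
The plan is to obtain the $U(\lev_\eta)$-module isomorphism directly from the Poincar\'e--Birkhoff--Witt theorem, and then to read off the $\z$-semisimplicity and the finiteness statement from an explicit grading on $U(\bar{\n}^\eta)$.

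First, from the vector space decomposition $\g=\bar{\n}^\eta\oplus\p_\eta$ the PBW theorem gives a $(U(\bar{\n}^\eta),U(\p_\eta))$-bimodule isomorphism $U(\g)\cong U(\bar{\n}^\eta)\otimes_{\mathbb C}U(\p_\eta)$; in particular $U(\g)$ is free as a right $U(\p_\eta)$-module on a PBW basis of $U(\bar{\n}^\eta)$. Tensoring over $U(\p_\eta)$ with $Y(\xi_\eta(\lambda),\eta)$ yields a vector-space isomorphism $\stdN(\lambda,\eta)\cong U(\bar{\n}^\eta)\otimes_{\mathbb C}Y(\xi_\eta(\lambda),\eta)$ which is moreover an isomorphism of left $U(\bar{\n}^\eta)$-modules. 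To identify the full $U(\lev_\eta)$-action, I first record that $\bar{\n}^\eta$ is $\mathrm{ad}\,\lev_\eta$-stable: by invariance of the Killing form, together with $\lev_\eta\subseteq\p_\eta$ and $[\lev_\eta,\p_\eta]\subseteq\p_\eta$, the space $[\lev_\eta,\bar{\n}^\eta]$ is Killing-orthogonal to $\p_\eta$, hence contained in $\bar{\n}^\eta$; so $\mathrm{ad}\,\lev_\eta$ preserves $U(\bar{\n}^\eta)$. Then for $x\in\lev_\eta$, $u\in U(\bar{\n}^\eta)$, $y\in Y(\xi_\eta(\lambda),\eta)$, writing $xu=(\mathrm{ad}(x)u)+ux$ in $U(\g)$ and using $x\in U(\p_\eta)$ to move $x$ across the tensor product, one gets $x\cdot(u\otimes y)=(\mathrm{ad}(x)u)\otimes y+u\otimes xy$, which is exactly the tensor-product $U(\lev_\eta)$-module structure on $U(\bar{\n}^\eta)\otimes_{\mathbb C}Y(\xi_\eta(\lambda),\eta)$.

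For the statement about $\z$: since $\z\subseteq Z(\lev_\eta)$ and $Z(\lev_\eta)$ acts on $Y(\xi_\eta(\lambda),\eta)$ through the character $\xi_\eta(\lambda)$, every $z\in\z$ acts on $Y(\xi_\eta(\lambda),\eta)$ by a scalar. On the other factor, $\bar{\n}^\eta$ is a sum of root spaces $\g_{-\alpha}$, hence a sum of $\h$-weight spaces and so of $\z$-weight spaces, giving $U(\bar{\n}^\eta)=\bigoplus_\gamma U(\bar{\n}^\eta)_{\gamma_\z}$ with $\mathrm{ad}\,z$ acting on $U(\bar{\n}^\eta)_{\gamma_\z}$ by the scalar $\gamma(z)$. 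Combining, $\stdN(\lambda,\eta)\cong\bigoplus_\gamma U(\bar{\n}^\eta)_{\gamma_\z}\otimes_{\mathbb C}Y(\xi_\eta(\lambda),\eta)$ with $\z$ acting by scalars on each summand, so $\z$ acts semisimply; and since $\mathrm{ad}\,\lev_\eta$ commutes with $\mathrm{ad}\,\z$ (because $\z$ is central in $\lev_\eta$), each $U(\bar{\n}^\eta)_{\gamma_\z}$ is an $\mathrm{ad}\,\lev_\eta$-submodule of $U(\bar{\n}^\eta)$.

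It remains to prove that $U(\bar{\n}^\eta)_{\gamma_\z}$ is finite-dimensional, which is the one step requiring an idea beyond bookkeeping. Choose a semisimple element $H\in\z$ (a suitable nonnegative combination of the fundamental coweights attached to $\Pi\setminus\Pi_\eta$) so that $\mathrm{ad}(H)$ has eigenvalue $0$ on $\lev_\eta$, strictly positive eigenvalues on $\n^\eta$, and strictly negative eigenvalues on $\bar{\n}^\eta$; such an $H$ lies in $\z$ precisely because it centralizes $\lev_\eta$. This realizes $\bar{\n}^\eta=\bigoplus_{j\ge 1}(\bar{\n}^\eta)_{-j}$ as a graded nilpotent Lie algebra with finite-dimensional pieces, and the induced grading $U(\bar{\n}^\eta)=\bigoplus_{j\ge 0}U(\bar{\n}^\eta)_{(-j)}$ has finite-dimensional graded pieces, since $U(\bar{\n}^\eta)_{(-j)}$ is spanned by PBW monomials of length at most $j$ in finitely many generators. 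Because $H\in\z$, any $\z$-weight $\gamma$ pins down $\gamma(H)$, so $U(\bar{\n}^\eta)_{\gamma_\z}\subseteq U(\bar{\n}^\eta)_{(\gamma(H))}$ is finite-dimensional; combined with the previous paragraph this exhibits $U(\bar{\n}^\eta)_{\gamma_\z}$ as a finite-dimensional $U(\lev_\eta)$-module. The obstacles I anticipate are routine but need care: verifying that the PBW identification genuinely carries the $U(\lev_\eta)$-action to the tensor-product action, and checking that the grading cocharacter can be taken inside $\z$, so that the $\mathrm{ad}(H)$-grading refines the $\z$-weight decomposition of $U(\bar{\n}^\eta)$.
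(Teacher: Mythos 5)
Your argument is correct and is essentially the standard proof of this result, which the paper itself does not reproduce (it cites McDowell): the PBW bimodule decomposition $U(\g)\cong U(\bar{\n}^\eta)\otimes_{\mathbb C}U(\p_\eta)$, the transport of the $\lev_\eta$-action to $\mathrm{ad}\otimes 1+1\otimes(\cdot)$ via $xu=\mathrm{ad}(x)u+ux$, the scalar action of $\z\subset Z(U(\lev_\eta))$ on the irreducible module $Y(\xi_\eta(\lambda),\eta)$, and the finite-dimensionality of the graded pieces via a grading element $H\in\z$. The only wording to fix is that $H$ should be a \emph{strictly} positive combination of the fundamental coweights attached to $\Pi\setminus\Pi_\eta$ (so that every root of $\n^\eta$, which involves at least one such simple root, evaluates to a strictly positive number on $H$); with that adjustment the proof is complete.
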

To avoid the unfortunate notational confusion between $\z$-weight spaces and $\h$-weight spaces, we will reluctantly use double subscripts and denote the generalized $\mathfrak{z}$-weight space of a module $X$ by $X_{\lambda_\z}$, for $\lambda\in\z^\ast$. The generalized $\h$-weight space corresponding to $\gamma\in\h^\ast$ will be denoted by the usual notation $X_\gamma$ (where $\gamma$ has no subscript). The following theorem appears as Theorem 4.6 in \cite{K78}, and will be crucial to our understanding of the structure of Whittaker modules. 
\begin{thm}{\cite[Theorem 4.6]{K78}}{\label{kt}}
Let $F$ be a finite-dimensional $U(\g)$-module. Assume $\eta$ is nondegenerate ($\lev_\eta=\g$), and let $Y$ be an irreducible Whittaker module in $\N(\xi(\lambda),\eta)$. Let $T=F\otimes Y$ be the tensor product $U(\g)$-module. Then $T$ is an object in $\N(\eta)$ and composition series length equal to dim$ F$. In particular, 
\beq
\text{dim} H^0_\eta(\n,T)=\text{dim} F.
\eeq
Suppose dim $F=k$. Then there is a composition series $0=T_0\subset T_1\subset\cdots\subset T_k=T$ such that $T_i/T_{i-1}\cong Y(\xi(\lambda+\nu_i),\eta)$, where $\nu_i$ are the weights of $F$ (counting multiplicities) ordered corresponding to a $\bar{\bb}$-invariant flag of $F$.

\end{thm}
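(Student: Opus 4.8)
The plan is to lift the problem to the ``universal'' $\eta$-Whittaker module $\tilde Y := U(\g)\otimes_{U(\n)}\mathbb{C}_\eta$ and to combine three ingredients: the tensor identity (projection formula) for induction, exactness of the functor $U(\g)\otimes_{U(\n)}(-)$, and Kostant's structural facts for $\tilde Y$, namely that $\tilde Y/\chi\tilde Y = Y(\chi,\eta)$ is simple with $\dim H^0_\eta(\n,Y(\chi,\eta)) = 1$ for $\chi\in\mathrm{Max}\,Z(\g)$, and that $H^0_\eta(\n,-)$ is exact on $\N(\eta)$ (which follows from the vanishing of higher $\eta$-twisted $\n$-cohomology of the simple objects, \cite{K78}). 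Since $\eta$ is nondegenerate, Propositions \ref{stdstruct} and \ref{cs_k} identify $Y$ with $Y(\xi(\lambda),\eta) = \tilde Y/\xi(\lambda)\tilde Y$, the unique simple object of $\N(\xi(\lambda),\eta)$, so we may assume $Y = Y(\xi(\lambda),\eta)$.

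First I would check that $T = F\otimes Y$ lies in $\N(\eta)$. For $u\in\n$ the operator by which $u - \eta(u)$ acts on $F\otimes Y$ is, in the tensor-product action, the sum of the operator induced by $u$ on $F$ (nilpotent, since $\n$ acts nilpotently on the finite-dimensional $F$) and the operator induced by $u-\eta(u)$ on $Y$ (locally nilpotent, since $Y\in\N(\eta)$); these two commute, and a sum of commuting locally nilpotent operators is locally nilpotent, so $T$ is locally $U(\n)$-finite for the $\eta$-twisted action. The tensor identity $F\otimes\tilde Y\cong U(\g)\otimes_{U(\n)}(F\otimes\mathbb{C}_\eta)$ presents $F\otimes\tilde Y$ as finitely generated over the Noetherian algebra $U(\g)$, and $T$ is a $U(\g)$-module quotient of it because $F\otimes(-)$ is exact and $Y$ is a quotient of $\tilde Y$; local $Z(\g)$-finiteness is the standard fact that tensoring a locally $Z(\g)$-finite module with a finite-dimensional one preserves this property. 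Hence $T\in\N(\eta)$, and by the finite-length theorem above $T$ has finite length. Next, fix a complete flag $0 = F_0\subset\cdots\subset F_k = F$ by $\bb$-submodules (Lie's theorem), so that $F_i/F_{i-1}\cong\mathbb{C}_{\nu_i}$ with $\n$ acting by zero and the $\nu_i$ exhausting the weights of $F$ with multiplicity. After twisting the $\n$-action by $\eta$, $\{F_i\otimes\mathbb{C}_\eta\}$ becomes a flag of $\n$-submodules of $F\otimes\mathbb{C}_\eta$ whose one-dimensional subquotients are isomorphic as $\n$-modules to $\mathbb{C}_\eta$. Applying the exact functor $U(\g)\otimes_{U(\n)}(-)$ together with the tensor identity produces a $U(\g)$-submodule filtration $0 = \widetilde{T}_0\subset\cdots\subset\widetilde{T}_k = F\otimes\tilde Y$ with $\widetilde{T}_i/\widetilde{T}_{i-1}\cong\tilde Y$; pushing it forward along the surjection $F\otimes\tilde Y\twoheadrightarrow F\otimes Y = T$ gives a filtration $0 = T_0\subseteq\cdots\subseteq T_k = T$ in which each $T_i/T_{i-1}$ is a quotient of $\tilde Y$.

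The main obstacle is the remaining step: identifying $T_i/T_{i-1}\cong Y(\xi(\lambda+\nu_i),\eta)$, i.e.\ that the central character is shifted by $\nu_i$. The subtlety is that the flag $\{F_i\}$ is only $\bb$-stable, so $F_i\otimes Y$ is not a $\g$-submodule of $T$ and the shift is invisible on $T$ itself; one has to work on $F\otimes\tilde Y$, where the projection onto the $i$-th subquotient $\tilde Y$ is $U(\g)$-linear, and identify the image under it of the kernel $F\otimes\xi(\lambda)\tilde Y$ of the map $F\otimes\tilde Y\to T$. Because under the tensor identity the $Z(\g)$-action carried by the second tensor factor is entangled with the coproduct, this image turns out to be $\xi(\lambda+\nu_i)\tilde Y$ — the shift from $\lambda$ to $\lambda+\nu_i$ arising exactly as in the Harish-Chandra analysis of how the center acts on such modules — whence $T_i/T_{i-1}\cong\tilde Y/\xi(\lambda+\nu_i)\tilde Y = Y(\xi(\lambda+\nu_i),\eta)$, in particular nonzero and simple. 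This is the $\eta$-twisted counterpart of the classical statement that $F\otimes M(\lambda)$ carries a Verma filtration with subquotients $M(\lambda+\nu_i)$, and I expect it to be the only genuinely technical point; Kostant establishes it by a direct computation with the Harish-Chandra homomorphism. Granting it, the rest is formal: the filtration $\{T_i\}$ is a composition series of length $k = \dim F$, and since $\dim H^0_\eta(\n,Y(\chi,\eta)) = 1$ and $H^0_\eta(\n,-)$ is exact on $\N(\eta)$, additivity gives $\dim H^0_\eta(\n,T) = k = \dim F$; matching the precise ($\bar{\bb}$-ordered) indexing in the statement is then just a matter of convention.
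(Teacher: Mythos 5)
The paper does not actually prove this theorem --- it is imported wholesale from Kostant --- so your proposal can only be judged on its own terms, and the step you defer as ``the only genuinely technical point'' is not a technical point you may grant yourself: for the filtration you construct it is false. Your $T_i$ is the image of $U(\g)\otimes_{U(\n)}(F_i\otimes\mathbb{C}_\eta)$, i.e.\ the $U(\g)$-submodule of $F\otimes Y$ generated by $F_i\otimes\bar w$, where $\bar w$ is the cyclic Whittaker vector of $Y$ and $F_1$ is spanned by a \emph{highest} weight vector $f_1$ of $F$ (a $\bb$-stable line, as your tensor-identity argument requires). Already for $\g=\mathfrak{sl}_2$, $F=\mathbb{C}^2$ and generic $\lambda$ one has $T_1=U(\g)(f_1\otimes\bar w)=T$. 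Indeed, writing $\Delta(\Omega)=\Omega\otimes1+1\otimes\Omega+h\otimes h+2e\otimes f+2f\otimes e$ for the Casimir, the cross term $2f\otimes e$ contributes $2\eta(e)\,(f\cdot f_1)\otimes\bar w\neq0$ to $\Omega(f_1\otimes\bar w)$, a vector lying outside $\mathbb{C}f_1\otimes Y$; so $f_1\otimes\bar w$ is not a $Z(\g)$-eigenvector and hence has nonzero components in both blocks $T^{[\lambda\pm\nu]}$, each of which is irreducible for generic $\lambda$. Applying a suitable $z-\xi_{\lambda-\nu}(z)$ then shows $U(\g)(f_1\otimes\bar w)\supseteq T^{[\lambda+\nu]}\oplus T^{[\lambda-\nu]}=T$. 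Thus your chain $0\subseteq T_1\subseteq T_2=T$ collapses ($T_1=T$, $T_2/T_1=0$) and is not a composition series, and the image of $F\otimes\xi(\lambda)\tilde Y$ in the $i$-th subquotient is not $\xi(\lambda+\nu_i)\tilde Y$. This is the opposite of the category-$\mathcal{O}$ situation, where $f_1\otimes v_\lambda$ is a genuine highest weight vector generating a Verma submodule.

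The remark the paper places immediately after the theorem is precisely the warning sign here: Kostant's filtration is indexed by a $\bar{\bb}$-invariant flag of $F$, and a $\bar{\bb}$-invariant flag is \emph{not} $\n$-stable, so it cannot be fed into the tensor identity over $U(\n)$ at all. The category-$\mathcal{O}$ argument does not transplant; the correct route (Kostant's) is to analyze the space $H^0_\eta(\n,F\otimes Y)$ directly --- each $f\in F$ extends uniquely to a Whittaker vector of the form $f\otimes\bar w+(\text{terms in }\bar{\n}U(\bar{\n})f\otimes Y)$ --- and to show that the $Z(\g)$-action on this $\dim F$-dimensional space, transported to $F$, is triangular with respect to a $\bar{\bb}$-stable flag with diagonal characters $\xi(\lambda+\nu_i)$; one then sets $T_i=U(\g)\cdot(\text{the }i\text{-th step of the corresponding flag of Whittaker vectors})$. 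Your first paragraph (that $T\in\N(\eta)$) is fine, as is the final bookkeeping (exactness of $H^0_\eta(\n,\cdot)$ together with $\dim H^0_\eta(\n,Y(\chi,\eta))=1$ converts the composition series into the dimension count), but the identification of the composition factors --- the actual content of the theorem --- is missing, and the mechanism you propose for obtaining it cannot work.
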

Observe that the above filtration is induced by a $\bar{\bb}$-invariant flag of $F$, rather than a $\bb$-invariant flag, as would be the case for category $\mathcal{O}$. 
\begin{cor}{\cite[Lemma 5.12]{MS}}{\label{dwt}}
Let $F$ be a finite-dimensional $U(\g)$-module. Let $\eta$ be any character of $\n$. Let $T=F\otimes \stdN(\lambda,\eta)$ for some $\lambda\in\h^\ast$. Then $T$ has a filtration with subquotients $\stdN(\lambda+\nu,\eta)$ for weights $\nu$ of $F$ (counting multiplicity). 
\end{cor}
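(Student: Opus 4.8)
The plan is to reduce the statement, via the tensor identity for parabolically induced modules, to Kostant's theorem (Theorem~\ref{kt}) applied to the reductive Lie algebra $\lev_\eta$. Recall that $\stdN(\lambda,\eta)=U(\g)\otimes_{U(\p_\eta)}Y(\xi_\eta(\lambda),\eta)$. For any finite-dimensional $U(\g)$-module $F$ and any $U(\p_\eta)$-module $M$, the tensor identity (the projection formula for induced modules) supplies a natural isomorphism of $U(\g)$-modules
\[
F\otimes_{\mathbb{C}}\big(U(\g)\otimes_{U(\p_\eta)}M\big)\;\cong\;U(\g)\otimes_{U(\p_\eta)}\big(F|_{\p_\eta}\otimes_{\mathbb{C}}M\big),
\]
where $F|_{\p_\eta}\otimes_{\mathbb{C}}M$ carries the diagonal $\p_\eta$-action; hence $T\cong U(\g)\otimes_{U(\p_\eta)}\big(F|_{\p_\eta}\otimes_{\mathbb{C}}Y(\xi_\eta(\lambda),\eta)\big)$. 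Since $U(\g)\cong U(\bar{\n}^\eta)\otimes_{\mathbb{C}}U(\p_\eta)$ as a right $U(\p_\eta)$-module by the Poincar\'e-Birkhoff-Witt theorem, the induction functor $U(\g)\otimes_{U(\p_\eta)}(-)$ is exact and carries filtrations to filtrations. It therefore suffices to produce a $\p_\eta$-module filtration of $\widetilde T:=F|_{\p_\eta}\otimes_{\mathbb{C}}Y(\xi_\eta(\lambda),\eta)$ whose successive subquotients are the modules $Y(\xi_\eta(\lambda+\nu),\eta)$, inflated to $\p_\eta$ with $\n^\eta$ acting by zero, as $\nu$ ranges over the $\h$-weights of $F$ with multiplicity; inducing such a filtration up then yields the claimed filtration of $T$ with subquotients $\stdN(\lambda+\nu,\eta)$.

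I would build the filtration of $\widetilde T$ in two steps. First, since $\n^\eta$ is an ideal of $\p_\eta$ spanned by positive root spaces for roots outside the span of $\Pi_\eta$, and $\h$ acts semisimply on $F$ with only finitely many weights, the subspaces $(\n^\eta)^iF$ form a finite chain of $\p_\eta$-submodules of $F$ with $(\n^\eta)^NF=0$ for $N\gg 0$, and $\n^\eta$ annihilates each graded piece $\bar F_i:=(\n^\eta)^iF/(\n^\eta)^{i+1}F$, which is thus a finite-dimensional $\lev_\eta$-module; moreover $\bigsqcup_i(\text{weights of }\bar F_i)$ is exactly the weight multiset of $F$ (the filtration splits as $\h$-modules). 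Tensoring this $\p_\eta$-filtration of $F$ with $Y(\xi_\eta(\lambda),\eta)$ over $\mathbb{C}$ gives a $\p_\eta$-filtration of $\widetilde T$ whose subquotients are the $\lev_\eta$-modules $\bar F_i\otimes_{\mathbb{C}}Y(\xi_\eta(\lambda),\eta)$, on each of which $\n^\eta$ acts by zero. Second, the restriction of $\eta$ to $\n_\eta$ is a nondegenerate character for $\lev_\eta$ (by the definition of $\Pi_\eta$) and $Y(\xi_\eta(\lambda),\eta)$ is an irreducible Whittaker module for $\lev_\eta$ (Kostant), so Theorem~\ref{kt} applied with $\g$ replaced by $\lev_\eta$ shows that $\bar F_i\otimes_{\mathbb{C}}Y(\xi_\eta(\lambda),\eta)$ has a $\lev_\eta$-composition series with subquotients $Y(\xi_\eta(\lambda+\nu),\eta)$, $\nu$ running over the weights of $\bar F_i$. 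Refining the $(\n^\eta)^i$-filtration of $\widetilde T$ by these composition series — whose terms still carry the zero action of $\n^\eta$ — produces the desired $\p_\eta$-filtration, and inducing up finishes the proof.

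The point requiring the most care is the compatibility of the two-step filtration with induction: one must check that each $(\n^\eta)^iF$ is genuinely $\p_\eta$-stable, that $\n^\eta$ acts by zero on all of the relevant subquotients (so that applying $U(\g)\otimes_{U(\p_\eta)}(-)$ really yields the standard Whittaker modules $\stdN(\lambda+\nu,\eta)$ and not some non-split extension involving $\n^\eta$), and that the full weight multiset of $F$ is recovered from $\bigsqcup_i\bar F_i$. The only substantive external input is Theorem~\ref{kt} for $\lev_\eta$, which applies once one observes that $\lev_\eta$ is reductive and $\eta|_{\n_\eta}$ is nondegenerate; everything else is bookkeeping with the PBW decomposition and the tensor identity. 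As a sanity check, when $\eta=0$ one has $\lev_\eta=\h$ and $\n^\eta=\n$, each $\bar F_i$ is a direct sum of one-dimensional $\h$-modules, and the argument degenerates to the classical proof that $F\otimes\stdN(\lambda,0)$ admits a standard (Verma) flag.
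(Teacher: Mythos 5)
Your proof follows the same route as the paper's: the tensor (Mackey) identity reduces the claim to producing a suitable $\p_\eta$-filtration of $F\otimes Y(\xi_\eta(\lambda),\eta)$, which is obtained from Kostant's theorem (Theorem~\ref{kt}) applied to $\lev_\eta$ and then transported back by exactness of parabolic induction. Your explicit $(\n^\eta)^iF$ filtration simply makes precise the $\p_\eta$-module bookkeeping (trivial $\n^\eta$-action on subquotients) that the paper's proof passes over when it appeals to the Grothendieck group.
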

\begin{proof}
We begin with the definition of standard Whittaker modules 
\beq
T=F\otimes \stdN(\lambda,\eta)=F\otimes\big(U(\g)\otimes_{\p_\eta}Y_{\lev_\eta}(\xi_\eta(\lambda),\eta)\big).
\eeq 
The Mackey isomorphism gives us the isomorphism of $U(\g)$-modules
\beq
T\cong U(\g)\otimes_{\p_\eta}\big(F\otimes Y_{\lev_\eta}(\xi_\eta(\lambda),\eta)\big).
\eeq
Now we can apply Theorem \ref{kt} to $F\otimes Y_{\lev_\eta}(\xi_\eta(\lambda),\eta)$. Since parabolic induction is an exact functor, it descends to the level of Grothendieck groups. Therefore the composition factors of $T$ are of the form $U(\g)\otimes_{\p_\eta}Y_{\lev_\eta}(\xi_\eta(\lambda+\nu),\eta)=\stdN(\lambda+\nu,\eta)$ for weights $\nu$ of $F$ (counting multiplicity). 
\end{proof}
The following theorem concludes the classification of simple objects in $\N$. 
\begin{thm}{\label{irrN_class}}{\cite{M},\cite[Theorem 2.6]{MS}}
Each simple $U(\g)$-module contained in $\N$ is isomorphic to $\irrN(\lambda,\eta)$ for some choice of $\eta\in\text{ch}\n$ and $\lambda\in \h^\ast$. 

\end{thm}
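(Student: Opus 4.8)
The plan is to realize every simple $L\in\N$ as a simple quotient of one of the standard modules $\stdN(\mu,\eta)$, and then apply Proposition~\ref{stdstruct}(2), which guarantees that $\stdN(\mu,\eta)$ has a unique simple quotient $\irrN(\mu,\eta)$. The bridge between an abstract simple object and the standard modules is the cyclic module $P(\chi,\eta):=U(\g)/\chi U(\g)\otimes_{U(\n)}\mathbb{C}_\eta$: by Proposition~\ref{cs_k} it is filtered by standards $\stdN(w\bullet\lambda,\eta)$ with $\chi=\xi(\lambda)$ and $w$ ranging over coset representatives of $W_\eta\backslash W$. So it suffices to show that $L$ is a quotient of $P(\chi,\eta)$ for a suitable choice of $\chi\in\text{Max}Z(\g)$ and $\eta\in\text{ch}\,\n$.

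First I would pin down the central data. By the decomposition $\N=\bigoplus_{\chi,\eta}\N(\chi,\eta)$ recorded above, the simple module $L$ lies in a single block $\N(\chi,\eta)$, and we may write $\chi=\xi(\lambda)$ for some $\lambda\in\h^\ast$. The heart of the argument is the existence of a \emph{Whittaker vector} in $L$: a nonzero $v\in L$ with $(u-\eta(u))v=0$ for all $u\in\n$ and $\chi v=0$. To produce one, observe that $u\mapsto u-\eta(u)$ defines a genuine representation of $\n$ on $L$ (since $\eta$ annihilates $[\n,\n]$), which is locally nilpotent precisely because $L\in\N(\eta)$; an Engel-type argument (induction on $\dim\n$, peeling off a codimension-one ideal containing $[\n,\n]$) then shows that the $\eta$-twisted invariants $H^0_\eta(\n,L)$ are nonzero. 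Because $Z(\g)$ commutes with $\n$, it preserves this subspace; since some power of $\chi$ annihilates it, choosing the largest power still acting nontrivially yields a nonzero vector killed by $\chi$ itself. That vector is the desired $v$.

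Given $v$, the submodule $U(\g)v$ is nonzero, hence all of $L$ by simplicity, so $x\mapsto xv$ defines a surjection $U(\g)\twoheadrightarrow L$. Its kernel contains $\chi U(\g)$ (as $\chi$ is central) and $U(\g)(u-\eta(u))$ for every $u\in\n$, so the surjection factors through $P(\chi,\eta)\twoheadrightarrow L$. Now $P(\chi,\eta)$ has finite length (every object of $\N$ does) and carries the finite filtration $0=P_0\subset P_1\subset\cdots\subset P_N=P(\chi,\eta)$ of Proposition~\ref{cs_k} with $P_i/P_{i-1}\cong\stdN(w_i\bullet\lambda,\eta)$. Let $i$ be minimal with $P_i$ surjecting onto $L$; then $P_{i-1}$ maps to a proper, hence zero, submodule of $L$, so $L$ is a simple quotient of $P_i/P_{i-1}\cong\stdN(w_i\bullet\lambda,\eta)$. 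By Proposition~\ref{stdstruct}(2), $L\cong\irrN(w_i\bullet\lambda,\eta)$, and setting $\mu=w_i\bullet\lambda$ completes the proof.

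The step I expect to be the main obstacle is the construction of the Whittaker vector $v$ — specifically, merging the two local-finiteness hypotheses into a single vector that is simultaneously an $\eta$-twisted $\n$-eigenvector and is annihilated by $\chi$ on the nose rather than merely by a power of it. Everything after that is formal bookkeeping with Propositions~\ref{cs_k} and~\ref{stdstruct} together with the finite-length property, and everything before it is just the block decomposition of $\N$.
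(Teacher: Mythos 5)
Your proof is correct, and it diverges from the paper's in the decisive step. Both arguments begin the same way: place $L$ in a single block $\N(\xi(\lambda),\eta)$ and relate it to the cyclic module $U(\g)/\xi(\lambda)U(\g)\otimes_{U(\n)}\mathbb{C}_\eta$ together with its standard filtration from Proposition~\ref{cs_k}. The paper, however, only records that $L$ is a \emph{subquotient} of some $\stdN(w\bullet\lambda,\eta)$, and must then do extra work to upgrade this to a surjection from a standard module: it invokes the $\z$-weight decomposition of Theorem~\ref{mcd} to find a $\z$-weight $\mu$ with $L_{\mu_\z}\neq 0$ and $\n^\eta L_{\mu_\z}=0$, locates a copy of Kostant's simple module $Y(I,\eta)$ inside $L_{\mu_\z}$ as a $U(\lev_\eta)$-module, and concludes by Frobenius reciprocity that $\text{Hom}_\g(\stdN(\nu,\eta),L)\neq 0$ for $\nu$ with $\xi_\eta(\nu)=I$. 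You instead produce an honest Whittaker vector $v\in L$ killed by $\xi(\lambda)$ (Engel on the finite-dimensional $U(\n)$-span, then passing to the last nonzero power of the maximal ideal on the $Z(\g)$-stable space $H^0_\eta(\n,L)$ — this is exactly the point the paper leaves implicit when it asserts "Therefore $L$ is a subquotient of..."), obtain a surjection $P(\xi(\lambda),\eta)\twoheadrightarrow L$, and peel off the filtration at the minimal step whose image is all of $L$. Your route is more elementary and self-contained, using only Propositions~\ref{cs_k} and~\ref{stdstruct}; the paper's route has the advantage of identifying \emph{which} standard module surjects onto $L$ (via the Levi action on the top $\z$-weight space), which is the information needed elsewhere for the parametrization by $W_\eta\backslash W/W_\lambda$, whereas yours only exhibits it as one of the filtration subquotients.
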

\begin{proof}
Each simple object $L$ in $\N$ has an infinitesimal character and is contained in $\N(\xi(\lambda),\eta)$ for some $\lambda\in\h^\ast $ and $\eta\in \text{ch}\n$. Therefore $L$ is a subquotient of $U(\g)/\xi(\lambda)U(\g)\otimes_{U(\n)}\mathbb{C}_\eta$. Proposition \ref{cs_k} implies that $L$ is a subquotient of $\stdN(w\bullet\lambda,\eta)$ for some $w\in W$. Any simple subquotient of $\stdN(w\bullet\lambda,\eta)$ must have a $\z$-weight $\mu\in\z^\ast$ so that $L_{\mu_\z}\neq 0$ and $\n^\eta L_{\mu_\z}=0$. There exists $I\in \text{Max }Z(\lev_\eta)$ so that $\text{Hom}_{\lev_\eta}(Y(I,\eta),L_{\mu_\z})\neq 0$. Therefore $\text{Hom}_\g(\stdN(\nu,\eta),L)\neq 0$ for $\nu\in\h^\ast$ such that $\xi_\eta(\nu)=I$. This proves that $L\cong \irrN(\nu,\eta)$ since $L$ is simple. 
\end{proof}
\begin{rem}
It follows from Proposition \ref{stdstruct} and Theorem \ref{irrN_class} that $W_\eta\backslash W\slash W_\lambda$ parametrizes the irreducible objects of $\mathcal{N}(\xi(\lambda),\eta)$.
\end{rem}  

\end{subsection}
\begin{subsection}{Whittaker vectors and $\z$-weight vectors of Whittaker modules}
 Let $X^{\n}=\{x\in X| ux-\eta(u)x=0\text{ for all $u\in \n$}\}$ be the subspace of $\n$-invariant vectors for the $\eta$-twisted action of $\n$ on $X$. Let $H^\bullet_\eta(\n,X)$ denote $\eta$-twisted $\n$-Lie algebra cohomology of $X$, i.e. the right derived functor of the $\n$-invariants functor $X\mapsto X^{\n}$ (for the $\eta$-twisted action of $\n$ on $X$). An  \emph{$\eta$-Whittaker vector} of $X\in \mathcal{N}$ is a vector $v\in X$ so that $nv=\eta(n)v$ for all $n\in \n$. Equivalently, an $\eta$-Whittaker vector of $X\in\N$ is a vector contained in $H^0_\eta(\n, X)$.

\begin{lem}{\cite[Lemma 5.8]{MS95}}{\label{exact}}
For $\g$ semisimple and $\eta$ nondegenerate ($\lev_\eta=\g$), the functor $H^0_\eta(\n,\cdot)$ from the category $\mathcal{N}(\eta)$ to the category of $Z(\g)$-modules is exact. 
\end{lem}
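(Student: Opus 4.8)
The plan is to separate the purely homological bookkeeping from the one genuinely representation-theoretic input — a cohomology vanishing for the irreducible Whittaker modules $Y(\xi(\lambda),\eta)$ — and to use that for nondegenerate $\eta$ the category $\N(\eta)$ is assembled from very few simple objects.

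First, $H^0_\eta(\n,\cdot)$ is by construction the $\n$-invariants functor for the $\eta$-twisted action, i.e. $X\mapsto\{x\in X: ux-\eta(u)x=0\ \forall u\in\n\}$, so it is left exact, and since $Z(\g)$ commutes with $U(\n)$ it indeed takes values in $Z(\g)$-modules. Because $H^\bullet_\eta(\n,\cdot)$ is the right derived functor of $H^0_\eta(\n,\cdot)$ and is computed by the $\eta$-twisted Chevalley--Eilenberg complex, any short exact sequence $0\to A\to X\to B\to0$ in $\N(\eta)$ — which in particular is exact for the $\eta$-twisted $\n$-action — gives a long exact sequence
\[
0\to H^0_\eta(\n,A)\to H^0_\eta(\n,X)\to H^0_\eta(\n,B)\to H^1_\eta(\n,A)\to\cdots .
\]
So it suffices to show $H^1_\eta(\n,A)=0$ for every $A\in\N(\eta)$.

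Next I would reduce this to simple modules. Every object of $\N$ has finite length, and $\N(\eta)$ is closed under subquotients (cf.\ Proposition~\ref{se}, applied blockwise), so by induction on length — splicing the long exact sequence attached to $0\to L\to A\to A/L\to0$ for a simple submodule $L\subseteq A$ — the vanishing of $H^1_\eta(\n,\cdot)$ on all of $\N(\eta)$ follows once it is known on simple objects. For $\eta$ nondegenerate we have $\p_\eta=\lev_\eta=\g$, hence $\stdN(\lambda,\eta)=Y(\xi(\lambda),\eta)$, which is irreducible by Kostant's theorem; together with Theorem~\ref{irrN_class} this identifies the simple objects of $\N(\eta)$ as exactly the modules $Y(\xi(\lambda),\eta)=U(\g)/\xi(\lambda)U(\g)\otimes_{U(\n)}\mathbb{C}_\eta$, $\lambda\in\h^\ast$ (equivalently, via the decomposition $\N(\eta)=\bigoplus_\chi\N(\chi,\eta)$, each block $\N(\xi(\lambda),\eta)$ has a unique simple object, of which every object is a finite iterated self-extension). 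Thus the whole lemma comes down to the single vanishing $H^1_\eta(\n,Y(\xi(\lambda),\eta))=0$.

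This last point is the crux, and the only place nondegeneracy of $\eta$ is genuinely used; I expect the formal reductions above to be routine. The proof I have in mind for it is to invoke Kostant's theory of Whittaker vectors \cite{K78}: for nondegenerate $\eta$ the module $Y(\xi(\lambda),\eta)$ has a one-dimensional space of Whittaker vectors (spanned by the image of $1\otimes1$) and vanishing higher $\eta$-twisted $\n$-cohomology — this nonabelian cohomology vanishing is precisely where nondegeneracy enters. Alternatively one can argue in the spirit of \cite{MS95} by localizing $Y(\xi(\lambda),\eta)$ to a holonomic $\mathcal{D}_\lambda$-module supported on the open Bruhat cell, on which (being $\eta$-twisted $N$-monodromic with nondegenerate $\eta$) the twisted invariants functor has no higher derived functors. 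Granting this vanishing, the long exact sequence and the length induction give $H^1_\eta(\n,\cdot)\equiv0$ on $\N(\eta)$, and hence $H^0_\eta(\n,\cdot)$ is exact.
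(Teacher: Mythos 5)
Your formal skeleton coincides with the paper's: left exactness plus the long exact sequence of $\eta$-twisted Lie algebra cohomology, reduction to simple objects by induction on length (using finite length of objects of $\N$ and closure of $\N(\eta)$ under subquotients and extensions), and then the identification of the simple objects with Kostant's modules $Y(\xi(\lambda),\eta)$ for nondegenerate $\eta$. All of that is fine and is exactly how the paper proceeds.

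The gap is in the one step you yourself flag as the crux, namely $H^{i}_\eta(\n,Y)=0$ for $i\ge1$ and $Y$ irreducible: you do not prove it, and the citation you propose for it is doubtful. Kostant's results in \cite{K78} (the ones quoted in this paper, e.g.\ Theorem 4.6) concern the degree-zero space of Whittaker vectors --- its dimension and cyclicity --- not the vanishing of higher $\eta$-twisted $\n$-cohomology, so ``invoke Kostant'' does not close the argument; and your alternative (``the twisted invariants functor has no higher derived functors'' on the open cell) is precisely the assertion to be proved, not a proof. The paper fills this in concretely: by \cite{MS95}, an irreducible nondegenerate Whittaker module, viewed as a $U(\n)$-module for the $\eta$-twisted action, is isomorphic to the natural action of $\n$ on $R(C(w_0))\cong R(N)$; one then proves $H^{i}(\n,R(N))=0$ for $i\ge1$ by induction on $\dim\n$, using the Koszul complex and the algebraic Poincar\'e lemma in the abelian case and the Hochschild--Serre spectral sequence for $\n'=[\n,\n]$ in general. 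Some argument of this kind (or an honest reference that actually contains the higher vanishing) is needed; without it your proof establishes only left exactness.
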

\begin{proof}
First we prove that $H_\eta^i(\n,V)=0$ when $i\ge 1$ and $V$ is an irreducible Whittaker module. Let $N$ be a connected algebraic group with Lie algebra $\n$ and a morphism of $N$ into the group of inner automorphisms of $\g$ such that its differential induces an injection of $\n$ into $\g$. In \cite{MS95}, Mili\v{c}i\'{c} and Soergel show that an irreducible Whittaker module viewed as a $U(\n)$-module with $\eta$-twisted action of $\n$, for nondegenerate $\eta$, is isomorphic to the differential of the natural action of $N$ on the algebra of regular functions $R(C(w_0))$ on the open cell $C(w_0)$ of the flag variety of $\g$. Under this isomorphism, the Whittaker vector generating the irreducible Whittaker module is mapped to the spherical vector $\mathbf{1}\in R(C(w_0))$. Notice that $C(w_0)$ is a single $N$ orbit on the flag variety of $\g$. Since, $N$ and $C(w_0)$ are connected, $N$ acts transitively on $C(w_0)$, and $\text{dim}N=\text{dim}C(w_0)$, we can conclude that $N$ and $C(w_0)$ are isomorphic as varieties. We can therefore consider the corresponding action of $\n$ on the algebra of regular functions on $N$. We will proceed by induction on the dimension of $\n$, following the proof of Lemma 1.9 in \cite{MilicicPandzic}. If $\text{dim}\n=1$, then $\n$ is abelian. Since $N$ is an affine space, it follows that $R(N)$ is a polynomial algebra, and  $H^i(\n,R(N))$ is the cohomology of the Koszul complex with coefficients in $R(N)$. By the Poincar\'{e} lemma, we have that $H^i(\n,R(N))=0$ for $i\ge 1$. If $\text{dim}\n>1$, we consider the commutator subgroup $N'=(N,N)$, with Lie algebra $\n'\subsetneq\n$. By the induction hypothesis, the Hochschild-Serre spectral sequence of Lie algebra cohomology collapses, giving us the equality 
$$
H^p(\n/\n', R(N/N'))=H^p(\n, R(N)).
$$
for $p\ge 0$. Since $\n/\n'$ is abelian, we can conclude (by the first part of the proof) that $H^p(\n/\n', R(N/N'))$ is zero for $p>0$. Therefore $H^i_\eta(\n,V)=0$ for all $i\ge 1 $ and irreducible Whittaker modules $V$. To complete the proof we will now consider an arbitrary Whittaker module $V$. Since $V$ has finite length, we will proceed by induction on the length of $V$. Suppose $V$ has length two, and consider the short exact sequence 
\beq
0\rightarrow S\rightarrow V\rightarrow I\rightarrow 0,
\eeq
where $S$ is an irreducible submodule and $I$ is an irreducible quotient. The long exact sequence of Lie algebra cohomology shows that $H^i(\n,V)=H^i(\n,I)$ for $i\ge 1$. By induction on the length of $V$ we see that $H^i(\n,V)=0$ for $i\ge 1$. 
\end{proof}
Recall the Cartan decomposition $\lev_\eta=\bar{\n}_\eta\oplus\h\oplus\n_\eta$ for the Levi subalgebra $\lev_\eta$, and the decomposition of $\g$ given by considering the subalgebra $\p_\eta$ and its orthocomplement $\bar{\n}^\eta$ (with respect to the Killing form) 
$$\g=\bar{\n}^\eta\oplus\p_\eta=\bar{\n}^\eta\oplus\lev_\eta\oplus \n^\eta.$$ Let $\s:=[\lev_\eta,\lev_\eta]$ be the semisimple part of the Levi subalgebra $\lev_\eta$.
\begin{prop}{\label{ex_wt}}
For any $\gamma\in\z^\ast$, the functor
\beqn
\mathcal{N}_\g(\eta)&\rightarrow&\mathcal{N}_\s(\eta_\s)\\
X&\mapsto& X_{\gamma_\z}:=\{x\in X:\forall z\in\z\quad (z-\gamma(z))^kx=0\}
\eeqn
is exact. Here we use the notation $\eta_\s$ to denote the restriction of $\eta$ (viewed as a function on $\n$) to $\n_\eta=\n\cap\s$. 
\end{prop}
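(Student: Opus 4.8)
The plan is to reduce the exactness of $X \mapsto X_{\gamma_\z}$ to two already-established facts: first, that $\z$ acts in a way that is compatible with the category $\N_\g(\eta)$, so that passing to a generalized $\z$-weight space is a well-behaved operation; and second, that the resulting object carries a natural $\N_\s(\eta_\s)$-structure. The key observation is that $\z$ is central in $\lev_\eta$, hence commutes with $U(\s)$ and with $\n_\eta = \n \cap \s$, so the assignment $X \mapsto X_{\gamma_\z}$ makes sense as a functor to $U(\s)$-modules once we know the target values actually land in $\N_\s(\eta_\s)$.

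First I would verify that $X_{\gamma_\z}$ is genuinely a $U(\s)$-submodule of $X$ and lies in $\N_\s(\eta_\s)$. Since $\z$ is central in $\lev_\eta$, for $z \in \z$ and $s \in \s$ we have $(z - \gamma(z))^k (s x) = s (z-\gamma(z))^k x$, so $X_{\gamma_\z}$ is $\s$-stable; the $\eta_\s$-twisted local finiteness over $\n_\eta$ is inherited from the $\eta$-twisted local finiteness of $X$ over $\n$ (restricting $\eta$ to $\n_\eta$), and local $Z(\s)$-finiteness follows because $Z(\s) \subset Z(\lev_\eta) \cdot U(\z)$ acts locally finitely on any object of $\N_\g(\eta)$ after we have fixed a generalized $\z$-weight — this is exactly the content underlying the remarks preceding the theorem statement, where projection onto generalized $Z(\g)$-infinitesimal character was seen to commute with projection onto generalized $\z$-weight spaces. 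Finite generation of $X_{\gamma_\z}$ over $U(\s)$ follows from Theorem \ref{mcd}: the $\z$-weight spaces appearing are governed by finite-dimensional $U(\lev_\eta)$-modules $U(\bar\n^\eta)_{\gamma_\z}$, so only finitely many $\z$-weights contribute and each weight space is finitely generated.

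For exactness itself, I would argue as follows. Given a short exact sequence $0 \to X' \to X \to X'' \to 0$ in $\N_\g(\eta)$, left-exactness of $X \mapsto X_{\gamma_\z}$ is immediate since taking a generalized eigenspace is a kernel-type operation. For right-exactness, the essential point is that $\z$ acts \emph{semisimply} in the relevant sense on objects built from standards — by Theorem \ref{mcd}, $\z$ acts semisimply on $\stdN(\lambda,\eta)$ — and more generally the generalized $\z$-weight space decomposition $X = \bigoplus_\gamma X_{\gamma_\z}$ holds for every object of $\N_\g(\eta)$, because every such object is a subquotient of a module admitting a $\stdN$-filtration (via Proposition \ref{cs_k} and Proposition \ref{se}, together with Corollary \ref{dwt}) on which $\z$ acts with a locally finite, hence direct-sum-decomposable, action. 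Once we know $X = \bigoplus_\gamma X_{\gamma_\z}$ functorially, the functor $X \mapsto X_{\gamma_\z}$ is just projection onto a direct summand, which is exact.

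The main obstacle I anticipate is establishing that $\z$ acts with a direct-sum (not merely filtered) decomposition into generalized weight spaces on an \emph{arbitrary} object of $\N_\g(\eta)$, rather than only on standards. The issue is that $\z$ need not act locally finitely a priori on a general finitely generated object — this has to be deduced. I would handle this by noting that any $X \in \N_\g(\eta)$ has finite length (by the finite-length theorem for $\N$), so it suffices to treat simple objects, and a simple object $\irrN(\lambda,\eta)$ is a quotient of $\stdN(\lambda,\eta)$, on which $\z$ acts semisimply by Theorem \ref{mcd}; semisimplicity of the $\z$-action then passes to the quotient, and an extension of two modules on which $\z$ acts semisimply, living inside $\N_\g(\eta)$, still decomposes as a direct sum of generalized $\z$-weight spaces (the generalized weight space decomposition always exists for a \emph{locally} $\z$-finite module, and local $\z$-finiteness is what we propagate by induction on length). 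With that in hand, the exactness is formal.
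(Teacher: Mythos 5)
Your overall architecture matches the paper's, and the exactness argument itself is essentially the paper's argument: one first establishes that every object of $\N_\g(\eta)$ is locally $U(\z)$-finite (by induction on length, using that irreducibles are quotients of standard modules, on which $\z$ acts semisimply by Theorem \ref{mcd}), whence the generalized $\z$-weight decomposition exists functorially and $X\mapsto X_{\gamma_\z}$ is projection onto a direct summand, hence exact on both sides.

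The gap is in your verification that $X_{\gamma_\z}$ actually lies in $\N_\s(\eta_\s)$, specifically the local $Z(\s)$-finiteness. You assert that this ``follows because $Z(\s)\subset Z(\lev_\eta)\cdot U(\z)$ acts locally finitely on any object of $\N_\g(\eta)$,'' citing the commutation of the $Z(\g)$-character projection with the $\z$-weight projection. That commutation statement concerns $Z(\g)$, not $Z(\lev_\eta)$, and gives no control over $Z(\s)$: an object of $\N_\g(\eta)$ is only assumed locally $Z(\g)$-finite, and local $Z(\lev_\eta)$-finiteness is something that must be proved, not quoted. The paper's route is to use Theorem \ref{mcd} to identify $\stdN(\lambda,\eta)_{\gamma_\z}\cong F\otimes Y_\s(\xi_\s(\lambda),\eta_\s)$ as a $U(\s)$-module with $F$ finite-dimensional, and then to apply Kostant's tensoring theorem (Theorem \ref{kt}) for the semisimple algebra $\s$ with its nondegenerate character $\eta_\s$: this yields that $\stdN(\lambda,\eta)_{\gamma_\z}$ has finite length over $U(\s)$ with composition factors in $\N_\s(\eta_\s)$, hence lies in $\N_\s(\eta_\s)$ by closure under extensions (Proposition \ref{se}); the general case then follows by passing to irreducible quotients (using semisimplicity of $\z$ on standards) and induction on length. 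Your appeal to Theorem \ref{mcd} for finite generation is in the right spirit, but without the Kostant step you have not produced local $Z(\s)$-finiteness or finite length. Separately, your parenthetical claim that ``only finitely many $\z$-weights contribute'' is false for standard modules, which have infinitely many $\z$-weights (one for each weight of $U(\bar{\n}^\eta)$ restricted to $\z$); fortunately that claim is not needed for the proposition.
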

\begin{proof}
We will begin by showing that the functor described above is well defined. Since any object $X\in \N_\g(\eta)$ has finite length and irreducible objects, being quotients of standard modules, are semi-simple over $\z$, it follows that $X$ is locally $U(\z)$-finite. 

Since $\z$ is a nilpotent Lie algebra that acts locally finitely on modules in $\mathcal{N}_\g(\eta)$, given an exact sequence of Whittaker modules $0\rightarrow A\rightarrow B\rightarrow C\rightarrow 0$, we get an exact sequence of $U(\z)$-modules by taking generalized weight spaces
\beq
0\rightarrow A_{\gamma_\z}\rightarrow B_{\gamma_\z}\rightarrow C_{\gamma_\z}\rightarrow 0.
\eeq
Here the morphisms are obtained by restricting the maps in the above exact sequence to the subspaces of generalized weight vectors.

We now turn our attention to showing that these generalized weight spaces are in fact Whittaker modules for the semisimple part of the Levi subalgebra. Again, for $X\in\mathcal{N}_\g(\eta)$, consider a Jordan-Holder filtration of $X$
\beq
0=V_0\subset V_1\subset\cdots\subset V_N=X,
\eeq
where $V_i/V_{i-1}$ is irreducible. We will show $X_{\gamma_\z}\in\mathcal{N}_\s(\eta_\s)$ by induction on the length of $X$. Assume $N=1$. Then $X$ is irreducible. We aim to show $X_{\gamma_\mathfrak{z}}\in\mathcal{N}_\mathfrak{s}(\eta_\s)$. There exists $\lambda$ so that the unique irreducible quotient of $\stdN(\lambda,\eta)$ is isomorphic to $X$. Theorem \ref{mcd} implies that $\stdN(\lambda,\eta)_{\gamma_\mathfrak{z}}\cong F \otimes Y_\mathfrak{s}(\xi_\mathfrak{s}(\lambda),\eta_\s)$ as $U(\mathfrak{s})$-modules, where $F$ is a finite-dimensional $U(\s)$-module. Since $F$ is finite-dimensional, we can apply Theorem 4.6 of \cite{K78} and conclude that $\stdN(\lambda,\eta)_{\gamma_\mathfrak{z}}$ has finite length as a $U(\s)$-module, and has compositions factors contained in $\mathcal{N}_\s(\eta_\s)$. Since $\mathcal{N}_\s(\eta_\s)$ is closed under extensions and $\stdN(\lambda,\eta)_{\gamma_\mathfrak{z}}$ has finite length, we can further conclude that $\stdN(\lambda,\eta)_{\gamma_\mathfrak{z}}$ is an object in $\mathcal{N}_\s(\eta_\s)$. Since $\mathfrak{z}$ acts semisimply on $\stdN(\lambda,\eta)$, we get the following exact sequence of $U(\mathfrak{s})$-modules
\beq
\stdN(\lambda,\eta)_{\gamma_\mathfrak{z}}\rightarrow X_{\gamma_\mathfrak{z}}\rightarrow 0.
\eeq
Since $\mathcal{N}_\mathfrak{s}(\eta_\s)$ is closed under quotients, we can conclude that $X_{\gamma_\mathfrak{z}}\in\mathcal{N}_\mathfrak{s}(\eta_\s)$. Now we will proceed with the inductive step. Assume that $(V_{n-1})_{\gamma_\z}\in \mathcal{N}_\s(\eta_\s)$. Then 
\beq
0\rightarrow (V_{n-1})_{\gamma_\z}\rightarrow (V_{n})_{\gamma_\z} \rightarrow (V_n/V_{n-1})_{\gamma_\z} \rightarrow 0
\eeq
is an exact sequence of $U(\s)$-modules. Since $\mathcal{N}_\s(\eta_\s)$ is closed under extensions, and $V_n/V_{n-1}$ is irreducible, we can conclude that $(V_{n})_{\gamma_\z}\in \mathcal{N}_\s(\eta_\s)$. In summary, we have the following exact sequence of $U(\z)$-modules
\beq
0\rightarrow A_{\gamma_\z}\rightarrow B_{\gamma_\z}\rightarrow C_{\gamma_\z}\rightarrow 0,
\eeq
where the morphisms are restrictions of $U(\g)$-module morphisms, and the objects are naturally $U(\s)$-modules contained in $\mathcal{N}_\s(\eta_\s)$. Therefore this is an exact sequence in $\mathcal{N}_\s(\eta_\s)$.
\end{proof}
\end{subsection}

\begin{subsection}{Functors between highest weight modules and Whittaker modules}

The following functor developed by Backelin in \cite{Back} will be crucial to our proof that standard objects in the Whittaker category are mapped (by the functor described in the introduction) to standard objects (or zero) in the category of Hecke algebra modules. \par 
The following construction is inspired by ideas in \cite{K78} and developed in \cite{Back}. For $\gamma\in\h^\ast$, let $X_{\gamma}$ denote the generalized $\gamma$-weight space of $X$. Let $P(V)$ denote the set of nonzero generalized weights of a $U(\g)$-module $V$.
 \begin{defi}
Let $X$ be a $U(\g)$-module contained in $\mathcal{O}':=\N(\eta=0)$. The \emph{completion module} of $X$, denoted $\overline{X}$, is the direct product
 \beq
 \overline{X}=\prod_{\gamma\in P(X)}X_{\gamma}.
 \eeq
 An element of $\overline{X}$ is a formal infinite sum
 \beq
 v=\sum_{\gamma\in P(X)}v_{\gamma}\in  \overline{X},
 \eeq
  where $v_\gamma\in X_\gamma$. We can extend the $U(\g)$-module structure of $X$ to the completion by defining
 \beq
 u_\nu v:=\sum_{\gamma\in P(X)}u_\nu v_{\gamma-\nu},
 \eeq
 where $u_\nu\in U(\g)_\nu$ (the $\nu$-weight space of $U(\g)$ viewed as a $\g$ module with that ad action of $\g$), and $u_\nu v_{\gamma-\nu}=0$ if $\gamma-\nu\notin P(X)$.  
 \end{defi}

 \begin{lem}{\label{tensor_prod}}
 Let $F$ be a finite-dimensional $U(\g)$-module, and $X$ be a $U(\g)$-module contained in $\mathcal{O}'$. Then $F\otimes_\mathbb{C}\overline{X}=\overline{F\otimes_\mathbb{C} X}$.
 \end{lem}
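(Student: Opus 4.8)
The plan is to reduce the claimed identity $F\otimes_{\mathbb{C}}\overline{X}=\overline{F\otimes_{\mathbb{C}}X}$ to a statement about generalized weight space decompositions, and to check that the extended $U(\g)$-actions on the two sides agree. First I would analyze the generalized weight spaces of $F\otimes_{\mathbb{C}}X$. Since $F$ is finite-dimensional, it is a semisimple $\h$-module (or at worst has a finite weight decomposition; in the semisimple case it is genuinely semisimple), so $F=\bigoplus_{\nu\in P(F)\cup\{0\}}F_\nu$ with each $F_\nu$ finite-dimensional. For $X\in\mathcal{O}'$ the generalized weight spaces $X_\gamma$ are finite-dimensional and $P(X)$ is contained in finitely many cosets of the root lattice, bounded above in the dominance order. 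From these facts one gets $(F\otimes X)_\mu=\bigoplus_{\nu}F_\nu\otimes X_{\mu-\nu}$, a finite direct sum, and $P(F\otimes X)\subseteq P(F)+P(X)$ (together with the zero weight of $F$), so the set of relevant weights is still locally finite in the appropriate sense.

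Next I would write out both sides explicitly. By definition $\overline{F\otimes X}=\prod_{\mu}(F\otimes X)_\mu=\prod_{\mu}\bigoplus_{\nu}F_\nu\otimes X_{\mu-\nu}$. On the other side, $F\otimes_{\mathbb{C}}\overline{X}=F\otimes_{\mathbb{C}}\prod_{\gamma}X_\gamma$; since $F$ is finite-dimensional, tensoring with $F$ commutes with the direct product, giving $F\otimes\overline{X}=\prod_{\gamma}F\otimes X_\gamma=\prod_{\gamma}\bigoplus_{\nu}F_\nu\otimes X_\gamma$. Re-indexing the latter product by $\mu=\gamma+\nu$ yields exactly $\prod_{\mu}\bigoplus_{\nu}F_\nu\otimes X_{\mu-\nu}$, so the two are canonically identified as vector spaces. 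The key elementary input here is that $-\otimes_{\mathbb{C}}F$ commutes with arbitrary direct products when $\dim F<\infty$; I would state this as a small lemma or simply invoke it.

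Finally I would verify that this vector space isomorphism is $U(\g)$-equivariant, where the module structure on $F\otimes\overline{X}$ is the usual tensor product action (with $\overline{X}$ carrying its extended action from the Definition) and the structure on $\overline{F\otimes X}$ is the extended action of the Definition applied to $F\otimes X$. Both actions are determined by how a weight vector $u_\nu\in U(\g)_\nu$ acts, and both are computed "componentwise with a shift": on $\overline{F\otimes X}$ one has $u_\nu$ sending the $\mu$-component to the $(\mu+\nu)$-component via the ordinary $\g$-action, and on $F\otimes\overline{X}$ the coproduct $\Delta(u_\nu)$ acts, with the $\overline{X}$-factor shifting weights and the $F$-factor moving between the finitely many $F_\nu$'s. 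Tracing the re-indexing $\mu=\gamma+\nu$ through the Leibniz rule shows these coincide on each finite-dimensional graded piece, hence on the products. The main obstacle, and the only place needing genuine care, is this equivariance bookkeeping: one must make sure the infinite-sum conventions ($u_\nu v_{\gamma-\nu}=0$ when $\gamma-\nu\notin P(X)$) are compatible on both sides and that no convergence/finiteness issue arises — which it does not, precisely because each homogeneous component is a finite direct sum of finite-dimensional spaces. I would therefore spend most of the written proof on the identification of weight components and then remark that equivariance is immediate from the formula for the extended action.
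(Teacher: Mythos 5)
Your proposal is correct and follows essentially the same route as the paper's proof: both arguments amount to pulling the finite-dimensional factor $F$ out of the infinite product, using that $P(F)$ is finite so that each weight component of $F\otimes X$ involves only finitely many weights of $F$ (the paper does this by expanding in a basis of $F$ and swapping the order of summation, you do it by commuting $F\otimes_{\mathbb{C}}(-)$ past the product and re-indexing). Your additional check that the identification is $U(\g)$-equivariant is a reasonable supplement that the paper leaves implicit.
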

 \begin{proof}
 Any element of $\overline{F\otimes X}$ is a formal infinite sum $\sum t_\tau$, where $t_\tau$ is a weight vector of $F\otimes X$ with weight $\tau$. Each weight vector $t_\tau$ can be written as a sum of simple tensors of weight vectors of $F$ and $X$,
 \beq
 t_\tau=\sum_{\alpha+\beta=\tau} f_\alpha\otimes x_\beta.
 \eeq
Therefore, we can write elements of $\overline{F\otimes X}$ as $\sum_J f_\alpha\otimes x_\beta$, where $J=P(F)\times P(X)$, $f_\alpha\in F$ is a weight vector of $F$, and $x_\beta\in X$ is a weight vector of $X$. Let $\{v_i:i\in I\}$ be a basis of $F$. We can write each element $f_\alpha$ as $f_\alpha=\sum_I a_{i}(\alpha)v_i$, where $a_{i}(\alpha)\in\mathbb{C}$. Therefore, 
 \beq
 \sum_J f_\alpha\otimes x_\beta=\sum_J\sum_I a_{i}(\alpha)v_i\otimes x_\beta=\sum_I\sum_J a_{i}(\alpha)v_i\otimes x_\beta=\sum_I v_i\otimes\bigg(\sum_{J} a_{i}(\alpha)x_\beta\bigg).
 \eeq
 We can rewrite the sum $\sum_J a_{i}(\alpha)x_\beta$ as a formal infinite sum of generalized weight vectors of $X$ (for each $i$), since each $x_\beta$ is already a generalized weight vector, and since there are only finitely many $(\alpha,\beta)\in J$ for a fixed $\beta$. This shows that $\sum_I v_i\otimes\big(\sum_J a_{i}(\alpha)x_\beta\big)\in F\otimes\overline{X}$ since $I$ is a finite set. Therefore $\overline{F\otimes X}\subseteq F\otimes\overline{X}$. The reverse inclusion is obvious. 
 \end{proof}
 \begin{defi}
 We will also define the subspace of $\eta$-finite vectors
\beq
X_\eta=\{x\in X: U_\eta(\n)^kx=0\quad \text{for some } k\in\mathbb{Z}_{\ge0}\},
\eeq
where $U_\eta(\n)$ is the kernel of $\eta:U(\n)\rightarrow \mathbb{C}$. 
 \end{defi}
\begin{defi}
Let $\overline{\Gamma}_\eta$ be the functor from the highest weight category $\mathcal{O}'$ to the category of Whittaker modules $\N(\eta)$ defined by
\beqn
\overline{\Gamma}_\eta:\mathcal{O}'&\rightarrow &\N(\eta)\\
X&\mapsto& (\overline{X})_\eta
\eeqn
\end{defi}
\begin{thm}{\label{back}}{\cite[Proposition 6.9]{Back}}
$\overline{\Gamma}_\eta$ is an exact functor with the following properties 
\begin{enumerate}
\item $\overline{\Gamma}_\eta(M(\lambda))=\stdN(\lambda,\eta)$ for all $\lambda\in \h^\ast$.
\item $\overline{\Gamma}_\eta(L(\lambda))=\irrN(\lambda,\eta)$ if $\lambda$ is $\n_\eta$-antidominant.
\item $\overline{\Gamma}_\eta(L(\lambda))=0$ if $\lambda$ is not $\n_\eta$-antidominant.
\end{enumerate}
\end{thm}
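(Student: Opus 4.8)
The plan is to factor $\overline{\Gamma}_\eta$ as $(\;\cdot\;)_\eta\circ\overline{(\;\cdot\;)}$, prove exactness of the composite, compute the functor on a Verma module to obtain (1), and then bootstrap to (2) and (3) using exactness together with the classification of simple Whittaker modules. First I would record that $X\mapsto\overline{X}$ is exact: since $\h$ acts locally finitely on any $X\in\mathcal{O}'$, passage to the generalized weight space $X\mapsto X_\gamma$ is exact, arbitrary direct products of exact sequences of vector spaces are exact, and the formula for the $U(\g)$-action on $\overline{X}=\prod_{\gamma\in P(X)}X_\gamma$ is compatible with this, so $\overline{(\;\cdot\;)}$ is an exact functor of $U(\g)$-modules. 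The functor $Z\mapsto Z_\eta$ of $\eta$-finite vectors is always left exact, so $\overline{\Gamma}_\eta$ is left exact; the real content is right exactness.

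For right exactness, given $0\to A\to B\to C\to 0$ in $\mathcal{O}'$ one gets $0\to\overline{A}\to\overline{B}\to\overline{C}\to 0$, and for $\bar c\in(\overline{C})_\eta$ one must correct an arbitrary preimage in $\overline{B}$ by an element of $\overline{A}$ to make it $\eta$-finite; the obstruction lies in a first $\eta$-twisted $\n$-cohomology group of $\overline{A}$. I would prove the vanishing of this obstruction by reducing to Verma modules: within a fixed block every object of $\mathcal{O}'$ has a finite resolution by projectives, each projective carries a Verma flag, and $\overline{(\;\cdot\;)}$ is exact, so it suffices to treat $A=M(\lambda)$. For a Verma module I would restrict $\overline{M(\lambda)}$ to $U(\lev_\eta)$ and split it along generalized $\z$-weights; because the $\z$-weights of $M(\lambda)$ are bounded above, each $\z$-weight space is a finitely generated $\lev_\eta$-module of highest-weight type, and the vanishing of the higher $\eta$-twisted $\n$-cohomology of $\overline{M(\lambda)}$ follows — via this $\z$-decomposition and the reconstruction of $M(\lambda)$ from its top $\z$-weight space by the action of $U(\bar{\n}^\eta)$ — from Kostant's results on the Levi, in the same spirit as Lemma \ref{exact} (using Theorem \ref{kt} and Proposition \ref{ex_wt}). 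I expect this cohomological input — making precise the $\lev_\eta$-structure of completions of Verma modules and extracting the vanishing for the completion rather than for an object of $\N(\eta)$ — to be the main obstacle of the whole argument.

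Granting exactness, (1) becomes a direct computation. Using $\g=\bar{\n}^\eta\oplus\lev_\eta\oplus\n^\eta$, I would locate inside $(\overline{M(\lambda)})_\eta$ the space of genuine $\eta$-Whittaker vectors lying in the top $\z$-weight space: restricted to $\lev_\eta$ that space is a highest-weight $\lev_\eta$-module whose $\eta_\s$-Whittaker vectors (in the completion) form a copy of Kostant's module $Y(\xi_\eta(\lambda),\eta)$ on which $\n^\eta$ acts by zero, i.e. a copy of the $\p_\eta$-module used to define $\stdN(\lambda,\eta)$. By Frobenius reciprocity this produces a $U(\g)$-map $\stdN(\lambda,\eta)=U(\g)\otimes_{U(\p_\eta)}Y(\xi_\eta(\lambda),\eta)\to(\overline{M(\lambda)})_\eta$, which I would then show is an isomorphism by comparing generalized $\z$-weight spaces on the two sides: on the left this count is governed by Theorem \ref{mcd}, and on the right the $\eta$-finite vectors in each $\z$-weight space of $\overline{M(\lambda)}$ are again computed by Kostant's theorem applied in the Levi, yielding matching finite multiplicities. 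One checks along the way that $(\overline{M(\lambda)})_\eta$ is finitely generated and $Z(\g)$-locally finite, so that $\overline{\Gamma}_\eta$ indeed lands in $\N(\eta)$.

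Finally I would deduce (2) and (3). Applying the now-exact $\overline{\Gamma}_\eta$ to $0\to N(\lambda)\to M(\lambda)\to L(\lambda)\to 0$ with $N(\lambda)$ the maximal submodule, and using (1), exhibits $\overline{\Gamma}_\eta(L(\lambda))$ as a quotient of $\stdN(\lambda,\eta)$; proceeding by induction on the length of $M(\lambda)$ and using that the composition factors of $N(\lambda)$ are $L(\mu)$ with $\mu\uparrow\lambda$, $\mu\neq\lambda$ — for which $\overline{\Gamma}_\eta(L(\mu))$ is $\irrN(\mu,\eta)$ or $0$ by the inductive hypothesis — a count of composition factors against those of $\stdN(\lambda,\eta)$ forces $\overline{\Gamma}_\eta(L(\lambda))$ to be either $0$ or exactly the unique simple quotient $\irrN(\lambda,\eta)$. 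To decide which, I would examine the top $\z$-weight space once more: it is the simple highest-weight $\lev_\eta$-module of highest weight $\lambda$, whose completion admits a nonzero space of $\eta_\s$-Whittaker vectors precisely when $\lambda$ is $\n_\eta$-antidominant (a simple highest-weight module over a semisimple Lie algebra supports a nondegenerate Whittaker vector in its completion exactly in the antidominant case). Hence $\overline{\Gamma}_\eta(L(\lambda))=0$ when $\lambda$ is not $\n_\eta$-antidominant, which is (3), and $\overline{\Gamma}_\eta(L(\lambda))=\irrN(\lambda,\eta)$ otherwise, which is (2).
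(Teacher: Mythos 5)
A preliminary remark: the paper offers no proof of this theorem --- it is quoted from \cite[Proposition~6.9]{Back}, and the only supporting material reproduced is Backelin's Lemma~6.5 --- so your proposal can only be judged on its own terms. Its overall strategy (exactness of the completion, decomposition into generalized $\z$-weight spaces, reduction to Kostant's nondegenerate theory on $\lev_\eta$) is indeed the spirit of Backelin's argument, but two steps have genuine gaps. The first is the exactness mechanism. The functor $X\mapsto X_\eta$ is the $U_\eta(\n)$-torsion (generalized eigenspace) functor, not the twisted invariants functor $H^0_\eta(\n,-)$, so the obstruction to lifting an $\eta$-finite vector is not a class in the first $\eta$-twisted $\n$-cohomology of $\overline{A}$; Lemma~\ref{exact} computes $H^i_\eta(\n,-)$ on modules that are already $\eta$-finite and says nothing about $\overline{A}$ for a general $A\in\mathcal{O}'$. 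Even granting that the obstruction is $R^1G(\overline{A})$ for some left exact $G$, your reduction to $A=M(\lambda)$ via finite projective resolutions goes the wrong way homologically: killing $R^1G$ by dimension shifting requires coresolving $A$ by $G$-acyclic objects, not resolving it by projectives. So the step you yourself flag as the main obstacle is not set up in a way that could be completed as written.

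The second gap is in the deduction of (2). Right exactness and (1) show only that $\overline{\Gamma}_\eta(L(\lambda))$ is a quotient of $\stdN(\lambda,\eta)$, and a quotient of a standard module need not be simple or zero. The composition-factor count cannot close this: the identity $[\stdN(\lambda,\eta)]=\sum_\mu[M(\lambda):L(\mu)]\,[\overline{\Gamma}_\eta(L(\mu))]$ constrains $[\overline{\Gamma}_\eta(L(\lambda))]$ only if one already knows how $[\stdN(\lambda,\eta)]$ decomposes into the classes $[\irrN(\mu,\eta)]$, and in this paper those multiplicities (Theorem~\ref{whitklp}) are \emph{deduced from} Theorem~\ref{back}, so invoking them here would be circular. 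What is needed instead is an argument that a nonzero $\overline{\Gamma}_\eta(L(\lambda))$ is generated by its $\eta$-Whittaker vectors and that these form a single copy of Kostant's irreducible module $Y(\xi_\eta(\lambda),\eta)$; this is essentially the role of Backelin's Lemma~6.5 (quoted after the theorem) together with the irreducibility of $Y(I,\eta)$. Your criterion in (3) --- vanishing exactly when the completion of the simple highest-weight $\lev_\eta$-module in the top $\z$-weight space has no $\eta_\s$-Whittaker vectors, i.e.\ when $\lambda$ fails to be $\n_\eta$-antidominant --- is the right one, but it is itself a nontrivial theorem of Kostant that you assert in a parenthesis rather than prove.
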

 The following lemma will be crucial to the proof of Theorem \ref{back}. 
 \begin{lem}{\cite[Lemma 6.5]{Back}}
 For each Verma module $M(\lambda)$, 
\beq
\overline{\Gamma}_\eta(M(\lambda))=U(\g)\cdot H^0_\eta(\n,\overline{\Gamma}_\eta(M(\lambda)))
\eeq 
is an equality of $U(\g)$-modules.
 \end{lem}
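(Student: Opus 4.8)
Since $U(\g)\cdot H^0_\eta(\n,\overline{\Gamma}_\eta(M(\lambda)))$ is visibly a $U(\g)$-submodule of $\overline{\Gamma}_\eta(M(\lambda))$, only the reverse inclusion needs proof, and the plan is to prove it by exhibiting a single $\eta$-Whittaker vector that generates the whole module. The idea is to reduce everything to the Levi $\lev_\eta$, on which $\eta$ restricts to a nondegenerate character of $\n_\eta$ and where the corresponding statement is due to Kostant~\cite{K78}. First I would use the PBW decomposition $U(\bar\n)=U(\bar\n^\eta)U(\bar\n_\eta)$ (valid because $\bar\n^\eta$ is an ideal of $\bar\n$) to identify $M(\lambda)$, with highest weight vector $v_\lambda$, as a $U(\lev_\eta)$-module with $U(\bar\n^\eta)\otimes_\mathbb{C}M_{\lev_\eta}(\lambda)$, where $M_{\lev_\eta}(\lambda)=U(\bar\n_\eta)v_\lambda$ is the Verma module for $\lev_\eta$, the algebra $\lev_\eta$ acts by the adjoint action on the first factor and the Verma action on the second, and $\z$ acts on $M_{\lev_\eta}(\lambda)$ through the scalar $\lambda|_\z$. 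Regrouping the weight spaces of $\overline{M(\lambda)}$ by generalized $\z$-weight then gives $\overline{M(\lambda)}=\prod_\mu\overline{M(\lambda)}_{\mu_\z}$, with $\overline{M(\lambda)}_{\mu_\z}\cong U(\bar\n^\eta)_{(\mu-\lambda)_\z}\otimes_\mathbb{C}\overline{M_{\lev_\eta}(\lambda)}$ (the factor $U(\bar\n^\eta)_{(\mu-\lambda)_\z}$ being finite-dimensional, this is an instance of Lemma~\ref{tensor_prod}); the top summand, at $\mu=\lambda|_\z$, is just $\overline{M_{\lev_\eta}(\lambda)}$, and it is annihilated by $\n^\eta$ since $\n^\eta$ strictly raises the $\z$-weight.

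Next I would apply Kostant's results~\cite{K78} to the reductive algebra $\lev_\eta$: the $\eta$-finite part $(\overline{M_{\lev_\eta}(\lambda)})_\eta$ coincides with the irreducible Whittaker module $Y_{\lev_\eta}(\xi_\eta(\lambda),\eta)$ and is generated by a single $\n_\eta$-Whittaker vector $w$, on which $Z(\lev_\eta)$ acts through $\xi_\eta(\lambda)$. Because $w$ lies in the top $\z$-weight space it is also killed by $\n^\eta$, so, $\eta$ vanishing on $\n^\eta$, $w$ is in fact an $\eta$-Whittaker vector for all of $\n$, i.e. $w\in H^0_\eta(\n,\overline{\Gamma}_\eta(M(\lambda)))$. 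Moreover, since $\n^\eta w=0$ and $\z$ acts by $\lambda|_\z$, the cyclic module $U(\p_\eta)w=U(\lev_\eta)w$ is a copy of $Y_{\lev_\eta}(\xi_\eta(\lambda),\eta)$, which is irreducible by Kostant; hence, using the PBW decomposition for $\g=\bar\n^\eta\oplus\lev_\eta\oplus\n^\eta$ and $U(\n^\eta)w=\mathbb{C}w$, one gets $U(\g)w=U(\bar\n^\eta)U(\lev_\eta)w=U(\bar\n^\eta)\cdot(\overline{M_{\lev_\eta}(\lambda)})_\eta$.

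It then remains to show that every $\eta$-finite vector $v$ of $\overline{M(\lambda)}$ lies in $U(\bar\n^\eta)\cdot(\overline{M_{\lev_\eta}(\lambda)})_\eta$. Since $\overline{\Gamma}_\eta(M(\lambda))\in\N_\g(\eta)$ it is locally finite for $U(\z)$ (cf.\ the proof of Proposition~\ref{ex_wt}), so $v$ involves only finitely many $\z$-weights; writing $v=\sum_\mu v_\mu$ with $v_\mu\in\overline{M(\lambda)}_{\mu_\z}$ a finite sum, and using that $\n_\eta$ commutes with $\z$ and hence preserves $\z$-weights, each $v_\mu$ is again $\eta$-finite for the $\n_\eta$-action. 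Under the isomorphism $\overline{M(\lambda)}_{\mu_\z}\cong U(\bar\n^\eta)_{(\mu-\lambda)_\z}\otimes\overline{M_{\lev_\eta}(\lambda)}$, with finite-dimensional (hence $\text{ad}\,\n_\eta$-nilpotent) first factor, the standard fact that the $\eta$-finite vectors of $F\otimes\overline{M_{\lev_\eta}(\lambda)}$ are exactly $F\otimes(\overline{M_{\lev_\eta}(\lambda)})_\eta$ places $v_\mu$ in $U(\bar\n^\eta)_{(\mu-\lambda)_\z}\otimes(\overline{M_{\lev_\eta}(\lambda)})_\eta\subseteq U(\g)w$; summing over the finitely many $\mu$ gives $v\in U(\g)w$. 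Therefore $\overline{\Gamma}_\eta(M(\lambda))=U(\g)w\subseteq U(\g)\cdot H^0_\eta(\n,\overline{\Gamma}_\eta(M(\lambda)))$, as required.

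The step I expect to be the main obstacle is the careful handling of the completion: one must verify that the generalized $\z$-weight decomposition is compatible with passing to $\overline{(\,\cdot\,)}$, so that $\overline{M(\lambda)}_{\mu_\z}$ genuinely splits off the finite-dimensional adjoint piece $U(\bar\n^\eta)_{(\mu-\lambda)_\z}$, and — more essentially — that $\eta$-finiteness of a vector in the infinite product $\overline{M(\lambda)}$ forces it to have only finitely many $\z$-weights, so that it is recovered from its finitely many $\z$-components and can be fed into the Kostant picture on $\lev_\eta$. By comparison, the remaining ingredients — the PBW bookkeeping, the behaviour of $\eta$-finiteness under tensoring with a finite-dimensional module, and the nondegenerate case ($\lev_\eta=\g$), which is classical — are routine.
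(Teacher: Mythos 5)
Your overall architecture is the right one, and it is the same reduction the paper has in mind: the paper's own ``proof'' is a one-line deferral to Theorems 3.8, 4.4 and Lemma 3.9 of Kostant, and your argument is essentially an unpacking of that reduction via the decomposition $M(\lambda)\cong U(\bar\n^\eta)\otimes_{\mathbb{C}}M_{\lev_\eta}(\lambda)$, the identification of the top $\z$-weight layer with $\overline{M_{\lev_\eta}(\lambda)}$, and Kostant's nondegenerate theory on $\lev_\eta$. The bookkeeping steps (the ideal property of $\bar\n^\eta$, the compatibility of the completion with the $\z$-grading via Lemma \ref{tensor_prod}, the fact that $\eta$ vanishes on $\n^\eta$ so that $w$ is a Whittaker vector for all of $\n$, and the passage from $\n_\eta$-finiteness of $F\otimes Y$ to $F\otimes Y_\eta$) are all correct.

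There is, however, a genuine gap exactly at the step you yourself flag as the main obstacle: the claim that an $\eta$-finite vector $v\in\overline{M(\lambda)}$ has only finitely many nonzero $\z$-components. You justify this by saying $\overline{\Gamma}_\eta(M(\lambda))\in\N_\g(\eta)$ and is therefore locally $U(\z)$-finite, but membership in $\N_\g(\eta)$ requires finite generation over $U(\g)$ (and the local $\z$-finiteness argument in the proof of Proposition \ref{ex_wt} additionally uses finite length), and finite generation of $(\overline{M(\lambda)})_\eta$ is precisely what this lemma is needed to establish en route to Theorem \ref{back}; so the appeal is circular. The claim is essential: without it you only show that each component $v_\mu$ lies in $U(\g)w$, and an infinite formal sum of such components need not lie in the (finitely supported) submodule $U(\g)w$. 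This finiteness is not automatic — it is the content of Kostant's Theorem 4.4--type computation of the ($\eta$-finite part of the) completion, e.g.\ by bounding $\dim H^0_\eta(\n,\cdot)$ of the level-completion of $U(\bar\n^\eta)\otimes Y_{\lev_\eta}(\xi_\eta(\lambda),\eta)$ — and it must be imported from \cite{K78} or proved directly rather than deduced from properties of $\N_\g(\eta)$.
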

 \begin{proof}
 The proof follows directly from Theorem 3.8, Lemma 3.9, and Theorem 4.4 of \cite{K78}.
 \end{proof}

 \begin{prop}{\label{ten_com}}
 Let $F$ be a finite-dimensional $U(\g)$-module and $\lambda\in\h^\ast$. Then
 \beq
 \overline{\Gamma}_\eta(M(\lambda)\otimes F)= \overline{\Gamma}_\eta(M(\lambda))\otimes F.
 \eeq
 \end{prop}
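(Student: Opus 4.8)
The plan is to combine the ``completion commutes with tensoring'' result (Lemma~\ref{tensor_prod}) with the standard module-theoretic fact that $\eta$-finite vectors behave well with respect to tensoring by a finite-dimensional module. Concretely, $\overline{\Gamma}_\eta(X) = (\overline{X})_\eta$, so I must show
\beq
\big(\overline{M(\lambda)\otimes F}\big)_\eta = \big(\overline{M(\lambda)}\big)_\eta \otimes F.
\eeq
By Lemma~\ref{tensor_prod} applied to $X = M(\lambda)$, the ambient spaces agree: $\overline{M(\lambda)\otimes F} = \overline{M(\lambda)}\otimes F$ as $U(\g)$-modules. So the content of the proposition is entirely about the $\eta$-finite subspace: I need $(\overline{M(\lambda)}\otimes F)_\eta = (\overline{M(\lambda)})_\eta \otimes F$, i.e.\ that taking $\eta$-finite vectors commutes with tensoring by the finite-dimensional module $F$.

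The first step is the inclusion $\supseteq$. If $v\in (\overline{M(\lambda)})_\eta$, so $U_\eta(\n)^k v = 0$ for some $k$, and $f\in F$, then since $F$ is finite-dimensional there is $m$ with $U(\n)^m$ acting on $F$ by raising weights past the top of $F$, hence $U_\eta(\n)$ (being contained in $U(\n)_{\ge 0}$ after a filtration argument) satisfies: $U_\eta(\n)^{m}$ maps $F$ into $U(\n)_{>0}F$ plus lower terms; more carefully, iterating the Leibniz rule, $U_\eta(\n)^{N}(v\otimes f)$ is a sum of terms $(U_\eta(\n)^{i}v)\otimes(U(\n)^{j}f)$ with $i+j = N$ (up to reindexing by the comultiplication), and for $N$ large each term vanishes because either $i\ge k$ (killing $v$) or $j$ is large enough that $U(\n)^j f$ involves only weights strictly above those appearing in $F$, forcing it to zero by finite-dimensionality. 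Hence $v\otimes f$ is $\eta$-finite, giving $(\overline{M(\lambda)})_\eta \otimes F \subseteq (\overline{M(\lambda)}\otimes F)_\eta$.

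For the reverse inclusion $\subseteq$: choose a basis $\{v_i\}$ of $F$ and write any $w\in (\overline{M(\lambda)}\otimes F)_\eta$ uniquely as $w = \sum_i x_i\otimes v_i$ with $x_i\in\overline{M(\lambda)}$. The subtlety is that the $v_i$ are not $\n$-eigenvectors, so I cannot immediately read off that each $x_i$ is $\eta$-finite. The fix is to work with the finite-dimensional space $W' := \sum_i U_\eta(\n)^{\le \infty}(x_i\otimes v_i)$—better, to argue by induction on $\dim F$ using a $\bb$- or $\bar\bb$-stable flag of $F$: pick a one-dimensional $\n$-stable (weight) subspace $\mathbb{C}f_0\subset F$ on which $\n$ acts by $0$, giving $0\to \overline{M(\lambda)}\otimes \mathbb{C}f_0 \to \overline{M(\lambda)}\otimes F \to \overline{M(\lambda)}\otimes (F/\mathbb{C}f_0)\to 0$; taking $\eta$-finite vectors is left exact, and by the previous paragraph the sub and (inductively) the quotient have $\eta$-finite parts equal to $(\overline{M(\lambda)})_\eta\otimes(\cdot)$, so a diagram chase together with the $\supseteq$ inclusion pins down the middle term. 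The main obstacle is precisely this $\subseteq$ direction—making rigorous that an $\eta$-finite vector in the tensor product decomposes into $\eta$-finite pieces; I expect the cleanest route is the induction on a flag of $F$ just described, since it reduces everything to the trivial-action one-dimensional case where the statement is immediate. (One should also note that $M(\lambda)\otimes F$ lies in $\mathcal{O}'$ so that $\overline{\Gamma}_\eta$ is defined on it, which is standard.)
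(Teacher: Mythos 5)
Your proposal is correct, and its first half (invoking Lemma~\ref{tensor_prod} to identify the ambient modules, then checking that $v\otimes f$ is $\eta$-finite whenever $v$ is, so that $\overline{M(\lambda)}_\eta\otimes F\subseteq \big(\overline{M(\lambda)}\otimes F\big)_\eta$) coincides with the paper's. Where you diverge is the reverse inclusion, which is indeed the only real content. The paper settles it by a filtration comparison: $\overline{M(\lambda)}_\eta\otimes F=\stdN(\lambda,\eta)\otimes F$ has a standard filtration with subquotients $\stdN(\lambda+\tau,\eta)$, $\tau\in P(F)$ (Corollary~\ref{dwt}), while $M(\lambda)\otimes F$ has a Verma filtration with subquotients $M(\lambda+\tau)$, so applying the exact functor $\overline{\Gamma}_\eta$ (which sends $M(\lambda+\tau)$ to $\stdN(\lambda+\tau,\eta)$ by Theorem~\ref{back}) shows both sides of the asserted equality have identical standard filtrations; together with the inclusion and the finiteness of $\z$-weight spaces this forces equality. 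Your route instead stays entirely at the level of $\n$-module structure: induct on an $\n$-stable flag of $F$, use that the $\eta$-finite-vectors functor is left exact, and chase the diagram against the (exact) tensored flag sequence, with the base case being a one-dimensional $\n$-trivial line. This is a legitimate and genuinely different argument; it is more elementary in that it avoids Corollary~\ref{dwt}, the exactness of $\overline{\Gamma}_\eta$, and Backelin's computation of $\overline{\Gamma}_\eta(M(\mu))$, whereas the paper's version has the advantage of exhibiting the standard filtration of $\overline{\Gamma}_\eta(M(\lambda)\otimes F)$ explicitly, which is what is used later in the proof of Theorem~\ref{induced_morph}. Two small points to tighten: your Leibniz-rule justification of the easy inclusion should be phrased via the two-sided ideal $U_\eta(\n)$ being generated by $\{u-\eta(u):u\in\n\}$ together with the nilpotence of $\n$ on $F$ (the phrase ``up to reindexing by the comultiplication'' is doing real work there); and in the induction you should note explicitly that $F/\mathbb{C}f_0$ is only an $\n$-module (not a $\g$-module), which is harmless since $\eta$-finiteness depends only on the $\n$-action.
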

\begin{proof}
By Lemma \ref{tensor_prod}, we have that $\overline{M(\lambda)\otimes F}=\overline{M(\lambda)}\otimes F$. If $v\in \overline{M(\lambda)}_\eta$, then $U_\eta(\n)^k(v\otimes f)=0$ for any $f\in F$ and some $k
>>0$. Therefore, we have the inclusion $\overline{M(\lambda)}_\eta\otimes F\subseteq \overline{M(\lambda)\otimes F}_\eta$. From Corollary 2.1.5 and Theorem 2.4.2, we know that $\overline{M(\lambda)}_\eta\otimes F $ has a standard filtration with quotients $\stdN(\lambda+\tau,\eta)$ for each $\tau\in P(F)$ (including multiplicity). Similarly, we have that $M(\lambda)\otimes F$ has a standard filtration with quotients $M(\lambda+\tau)$ for $\tau\in P(F)$. Since $\overline{\Gamma}_\eta$ is exact, and maps $M(\lambda+\tau)$ to $\stdN(\lambda+\tau,\eta)$, we can conclude that $\overline{M(\lambda)}_\eta\otimes F$ and $\overline{M(\lambda)\otimes F}_\eta$ have isomorphic standard filtrations.
\end{proof}
\end{subsection}

\begin{subsection}{Composition series of Whittaker modules}
We can now use Theorem \ref{back} to calculate the multiplicity of irreducible Whittaker modules in the composition series of standard Whittaker modules.

\begin{thm}{\cite{MS},\cite[Theorem 6.2]{Back}}{\label{whitklp}}
Assume $\lambda\in\h^\ast$ is dominant. In the Grothendieck group of $\mathcal{N}(\xi(\lambda),\eta)$, we have
\beqn
[\stdN(w\bullet\lambda,\eta)]=\sum_{y\in I} P_{w,y}(1)[\irrN(y\bullet \lambda, \eta)],
\eeqn
where the sum is taken over $I=\{y\ge w : y\text{ is the longest element of }W_\eta y W_\lambda\}$ and $P_{w,y}$ are the Kazhdan-Lusztig polynomials of the Coxeter system $(W,S_\Pi)$. 
\end{thm}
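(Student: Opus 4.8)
The plan is to transport the Kazhdan–Lusztig multiplicity formula for category $\mathcal{O}$ across Backelin's functor $\overline{\Gamma}_\eta$, using exactness and its known behavior on Verma and simple modules (Theorem \ref{back}). First I would recall the classical Kazhdan–Lusztig multiplicity formula in $\mathcal{O}_\lambda$ for $\lambda$ dominant: in the Grothendieck group one has $[M(w\bullet\lambda)] = \sum_{y\ge w} P_{w,y}(1)\,[L(y\bullet\lambda)]$, where the sum runs over all $y$ with $W_\lambda$-cosets suitably normalized (one fixes, say, longest or shortest coset representatives in $W/W_\lambda$ to avoid overcounting). Then I would apply $\overline{\Gamma}_\eta$, which is exact and hence descends to a homomorphism of Grothendieck groups; by Theorem \ref{back}(1) it sends $[M(w\bullet\lambda)]$ to $[\stdN(w\bullet\lambda,\eta)]$, and by Theorem \ref{back}(2)--(3) it sends $[L(y\bullet\lambda)]$ either to $[\irrN(y\bullet\lambda,\eta)]$ when $y\bullet\lambda$ is $\n_\eta$-antidominant, or to $0$ otherwise.

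The bookkeeping step is then to identify exactly which cosets survive. A weight $y\bullet\lambda$ is $\n_\eta$-antidominant precisely when $y$ is the longest element in its coset $W_\eta y W_\lambda$ (this is the combinatorial translation of the antidominance condition with respect to the simple roots in $\Pi_\eta$, and it is why the indexing set $I$ in the statement is phrased that way). So I would carefully check this equivalence: antidominance of $y\bullet\lambda$ with respect to $\n_\eta$ means $\langle y(\lambda+\rho),\alpha^\vee\rangle$ is a non-positive (or non-negative, depending on sign conventions) integer for all $\alpha\in\Pi_\eta$, which holds iff $s_\alpha y > y$ for all such $\alpha$ modulo $W_\lambda$, i.e. iff $y$ is $W_\eta$-maximal in $W_\eta y W_\lambda$. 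I would also need to confirm that distinct elements $y\in I$ give non-isomorphic $\irrN(y\bullet\lambda,\eta)$, which follows from Proposition \ref{stdstruct} together with the remark that $W_\eta\backslash W\slash W_\lambda$ parametrizes the simples of $\mathcal{N}(\xi(\lambda),\eta)$.

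Putting these together: applying $\overline{\Gamma}_\eta$ to the $\mathcal{O}$-side identity and discarding the terms killed by the functor yields
\beq
[\stdN(w\bullet\lambda,\eta)] = \sum_{y\in I} c_{w,y}\,[\irrN(y\bullet\lambda,\eta)],
\eeq
and it remains to see that the surviving coefficient $c_{w,y}$ equals $P_{w,y}(1)$ for $y$ the longest element of $W_\eta y W_\lambda$ — but this is immediate since $\overline{\Gamma}_\eta$ preserves multiplicities on the nose (it is exact and sends the relevant $L$'s bijectively to the relevant $\irrN$'s), so the coefficient is just the one read off from the $\mathcal{O}$ formula with the chosen coset representative. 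One subtlety I would be careful about is the choice of representatives on the $\mathcal{O}$-side: the classical formula is usually stated with $w, y$ ranging over a fixed transversal of $W/W_\lambda$ (or over all of $W$ with a correction), and I must make sure the transversal is chosen so that the $\n_\eta$-antidominant representatives are exactly the longest elements of the double cosets — which is the content of the set $I$.

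**Main obstacle.** The genuine work is entirely in this combinatorial matching: verifying the equivalence between "$y\bullet\lambda$ is $\n_\eta$-antidominant" and "$y$ is the longest element of $W_\eta y W_\lambda$," and checking that under this identification the Kazhdan–Lusztig coefficients $P_{w,y}(1)$ from $\mathcal{O}$ (with the right choice of coset representatives) match the claimed ones. The representation-theoretic content — exactness of $\overline{\Gamma}_\eta$ and its values on $M(\lambda)$ and $L(\lambda)$ — is already packaged in Theorem \ref{back}, so once the indexing is pinned down the result drops out. An alternative route, should the coset bookkeeping prove delicate, is to cite the Miličić–Soergel computation directly, but the point here is that Backelin's functor gives a clean conceptual derivation.
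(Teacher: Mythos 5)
Your proposal is correct and is exactly the route the paper has in mind: the paper states this result as a citation to Backelin and Mili\v{c}i\'{c}--Soergel, prefaced by the remark that it follows by applying the exact functor $\overline{\Gamma}_\eta$ of Theorem \ref{back} to the Kazhdan--Lusztig multiplicity formula in category $\mathcal{O}$. Your identification of the surviving terms (``$y\bullet\lambda$ is $\n_\eta$-antidominant'' if and only if $y$ is the longest element of $W_\eta y W_\lambda$, given the longest-representative normalization of the $\mathcal{O}$-side formula for singular dominant $\lambda$) is precisely the bookkeeping that makes the cited argument go through.
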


\begin{cor}{\label{whit_geo_mult}}
We can give an alternative description of the multiplicity formulas in terms of the geometry of the flag variety. Let $C(w)$ be the Bruhat cell of the flag variety corresponding to $w\in W$. For $x\in C(w)\subset \overline{C({}_\eta y_\lambda)}$,
\beq
[ \stdN(w\bullet\lambda,\eta):\irrN(y\bullet\lambda,\eta)]=\sum_i\text{dim}H^i(\text{IC}_{x}(C({}_\eta y_\lambda))).
\eeq
Here $\text{IC}_x(C({}_\eta y_\lambda))$ denotes the stalk at $x$ of the intersection cohomology complex corresponding to the trivial local system on $C({}_\eta y_\lambda)$. 
\end{cor}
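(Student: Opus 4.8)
The plan is to read the statement off directly from Theorem~\ref{whitklp} together with the Kazhdan--Lusztig interpretation of the polynomials $P_{w,y}$ as Poincar\'e polynomials of intersection cohomology stalks on Schubert varieties~\cite{KL}. Write ${}_\eta y_\lambda$ for the longest element of the double coset $W_\eta y W_\lambda$. By the remark following Theorem~\ref{irrN_class}, the irreducible module $\irrN(y\bullet\lambda,\eta)$ depends only on the double coset $W_\eta y W_\lambda$, and distinct double cosets have distinct longest representatives and non-isomorphic associated irreducibles; hence the coefficient $P_{w,y}(1)$ appearing in Theorem~\ref{whitklp} for $y$ in the index set $I$ is exactly the multiplicity $[\stdN(w\bullet\lambda,\eta):\irrN(y\bullet\lambda,\eta)]$. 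For an arbitrary $y$ one first replaces $y$ by ${}_\eta y_\lambda$, obtaining
\[
[\stdN(w\bullet\lambda,\eta):\irrN(y\bullet\lambda,\eta)] = \begin{cases} P_{w,{}_\eta y_\lambda}(1) & \text{if } {}_\eta y_\lambda\ge w,\\ 0 & \text{otherwise.}\end{cases}
\]
The condition ${}_\eta y_\lambda\ge w$ is precisely the containment $C(w)\subseteq\overline{C({}_\eta y_\lambda)}$ assumed in the statement; when it fails, both sides vanish, since an intersection cohomology stalk at a point outside the support is zero.

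Next I would recall the theorem of Kazhdan and Lusztig: for $w\le v$ in $W$ and any $x\in C(w)$,
\[
P_{w,v}(q) = \sum_i \dim \mathcal{H}^{2i}\bigl(\mathrm{IC}(\overline{C(v)})\bigr)_x\, q^i,
\]
where $\mathrm{IC}(\overline{C(v)})$ is the intersection cohomology complex of the Schubert variety $\overline{C(v)}$ attached to the constant local system on the open cell $C(v)$, and where the odd-degree stalk cohomology vanishes. Here one should fix a normalization of $\mathrm{IC}$ (the perverse shift) and note that every Bruhat cell $C(w)$ is an affine space, hence simply connected, so that the trivial local system is the only local system in play and the notation $\mathrm{IC}_x(C(v))$ is unambiguous. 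Setting $q=1$ and using the vanishing of odd stalk cohomology,
\[
P_{w,v}(1) = \sum_i \dim \mathcal{H}^i\bigl(\mathrm{IC}(\overline{C(v)})\bigr)_x = \sum_i \dim H^i(\mathrm{IC}_x(C(v))).
\]

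Combining the two displays with $v={}_\eta y_\lambda$ yields the asserted formula. The only content beyond bookkeeping is the translation between the combinatorial index set $I$ of Theorem~\ref{whitklp} and the flag-variety geometry: one must check that passing to the longest double-coset representative ${}_\eta y_\lambda$ is exactly what makes the Bruhat comparison $w\le{}_\eta y_\lambda$ the correct nonvanishing condition, and that no overcounting occurs because distinct double cosets contribute distinct irreducibles. For non-integral $\lambda$ one replaces $W$ throughout by the integral Weyl group $W_{[\lambda]}$ and the flag variety by the corresponding partial flag geometry; the argument is otherwise identical. I expect the only obstacle, and it is a minor one, to be keeping the conventions for $W_\lambda$, the integral Weyl group, and the $\mathrm{IC}$ normalization mutually consistent with those implicit in Theorem~\ref{whitklp}.
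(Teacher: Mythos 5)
Your proposal is correct and is exactly the argument the paper intends: the corollary is stated without proof as an immediate consequence of Theorem~\ref{whitklp} combined with the classical Kazhdan--Lusztig identification of $P_{w,v}(1)$ with the total dimension of the intersection cohomology stalk of the Schubert variety $\overline{C(v)}$ at a point of $C(w)$. Your additional bookkeeping about longest double-coset representatives and the vanishing outside the closure is consistent with the index set $I$ in Theorem~\ref{whitklp}.
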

\end{subsection}

\end{section}

\begin{section}{Graded affine Hecke algebra modules}\label{heck-sec}
Hecke algebras appear naturally in the representation theory of algebraic groups over $p$-adic fields. Most notably, affine Hecke algebras were used in the proof of the Deligne-Langlands conjecture for irreducible representations of $p$-adic groups in~\cite{KL87}. By introducing a filtration on the affine Hecke algebra, Lusztig constructs a corresponding graded algebra, whose representation theory is closely related that of the affine Hecke algebra~\cite{L}. The representation theory of the graded affine Hecke algebra is in many ways easier to study, and the relationship between graded affine Hecke algebras and affine Hecke algebras can be thought of as analogous to the relationship between Lie algebras and Lie groups. Specifically, the graded affine Hecke algebra can be studied using methods of equivariant K-homology. With these tools available, Lusztig was able to construct standard and irreducible modules for the Hecke algebra, as well as compute the composition series of standard modules in terms of intersection homology~\cite{LLS2}. \par 
  In this section we will review an algebraic construction of standard and irreducible graded affine Hecke algebras due to Evens~\cite{E}, as well as the corresponding composition series in terms of the geometric parametrization of standard modules due to Lusztig. Finally, we will discuss a useful combinatorial parametrization due to Zelevinsky~\cite{Z} in the case where $G$ is of type $A_n$.

\begin{subsection}{Graded affine Hecke algebras}
We will now define the graded affine Hecke algebra introduced by Lusztig \cite{L}. Let $(X,R,   Y, R^\vee,\Pi)$ be a based root datum, with $V^\ast=\mathbb{C}\otimes_\mathbb{Z}X$ and $V=\mathbb{C}\otimes_\mathbb{Z}Y$. Let $W$ be the reflection group generated by simple reflections $s_\alpha$ for $\alpha\in\Pi$.
\begin{defi}
The graded affine Hecke algebra $\HH$ of the based root datum $(X,R,   Y, R^\vee,\Pi)$ is the unital associative algebra over $\mathbb{C}$ generated by $\{t_w:w\in W\}$ and $\{t_h:h\in V\}$, subject to the relations
\begin{enumerate}[i.]
\item The map $w\mapsto t_w$ from $\mathbb{C}[W]$ to $\HH$ is an algebra injection.
\item The map $h\mapsto t_h$ from $S(V)$ to $\HH$ is an algebra injection.
\item The generators satisfy the following commutation relation
\beq
t_{s_\alpha} t_h-t_{s_\alpha(h)}t_{s_\alpha}=\langle \alpha,h\rangle  \text{ for all $\alpha\in \Pi$ and $h\in V$}.
\eeq
\begin{rem}
The map $w\otimes h\mapsto t_wt_h$ defines a vector space isomorphism from $\mathbb{C}[W]\otimes S(V)$ to $\HH$. 
\end{rem}
For notational convenience we will write $w$ instead of $t_w$ and $h$ instead of $t_h$. Rewriting the commutation relations in condition iii, we get
\beq
s_\alpha h-s_\alpha(h) s_\alpha=\langle \alpha,h\rangle \text{ for all $\alpha\in \Pi$ and $h\in V$}.
\eeq
\end{enumerate}

\end{defi}

Let $\mathfrak{a}^\ast=\{x\in V^\ast:\alpha^\vee(x)=0\quad\forall \alpha\in\Pi\}$ and $\mathfrak{a}=\{x\in V:\lambda(\alpha)=0\quad\forall\alpha\in\Pi\}$. We will define the based root datum $(X_{ss},R, Y_{ss},R^\vee, \Pi)$ by considering the subsets of $X$ and $Y$ which are perpendicular to $\mathfrak{a}$ and $\mathfrak{a}^\ast$ respectively. Let $X_{ss}=\{x\in X: x(a)=0\quad\forall a\in\mathfrak{a}\}$ and $Y_{ss}=\{y\in Y: a'(y)=0\quad\forall a'\in\mathfrak{a}^\ast\}$. Then $S(\mathfrak{a})$ is in the center of $\HH$ and we have the decomposition (as algebras)
\beq
\HH\cong \HH_{ss}\otimes S(\mathfrak{a}),
\eeq
where $\HH_{ss}$ is the graded affine Hecke algebra associated to the root datum $(X_{ss},R, Y_{ss},R^\vee, \Pi)$. 
 \begin{lem}{\cite[Lemma 4.5]{L}}
The center $Z(\HH)$ of $\HH$ is 
\beq
Z(\HH)=S(V)^{W}.
\eeq
Moreover, central characters of $\HH$ (and maximal ideals of $S(V)^W$) are parametrized by $W$ orbits of $\lambda\in V^\ast$ (with the usual action $w\lambda$ of $W$ on $V^\ast$). Let $\chi_\lambda$ denote the maximal ideal in $S(V)^W$ corresponding to $\lambda\in V^\ast$.
\end{lem}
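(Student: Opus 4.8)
The plan is to prove the Lemma (that $Z(\HH) = S(V)^W$, with central characters parametrized by $W$-orbits in $V^\ast$) by a direct computation using the defining relations, in the style of Lusztig's original argument. First I would use the vector space isomorphism $\mathbb{C}[W]\otimes S(V)\xrightarrow{\sim}\HH$ to write any element $z\in\HH$ uniquely as $z=\sum_{w\in W} w\, p_w$ with $p_w\in S(V)$. The strategy is to impose the two families of commutation conditions: $z$ must commute with every $h\in V\subset S(V)$, and $z$ must commute with every $s_\alpha$ for $\alpha\in\Pi$. Since the $s_\alpha$ generate $W$, these together say $z$ is central.

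The first condition is the easy one: commuting $z$ with $h\in V$ and using relation (iii) in the form $w h = (w\cdot h)\, w + (\text{lower terms in }S(V))$ — more precisely, moving $h$ past each $w$ by repeatedly applying $s_\alpha h - s_\alpha(h)s_\alpha = \langle\alpha,h\rangle$ — one compares the top-degree-in-$w$ parts and concludes that for $z$ to commute with all of $S(V)$ each $p_w$ must be supported so that $w$ acts trivially on the relevant characters; a clean induction on the length of $w$ (or on degree in $S(V)$) forces $p_w=0$ unless $w=e$. Hence $z=p_e\in S(V)$. Then the second condition, commuting $p_e\in S(V)$ with each $s_\alpha$, reads $s_\alpha p_e - s_\alpha(p_e) s_\alpha = (\text{sum of }\langle\alpha,\cdot\rangle\text{ terms})$ obtained by iterating relation (iii); setting this equal to $p_e s_\alpha - s_\alpha p_e \cdots$ shows it vanishes precisely when $s_\alpha(p_e)=p_e$, i.e. $p_e\in S(V)^W$. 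Conversely every element of $S(V)^W$ is central: it commutes with $S(V)$ trivially and with each $s_\alpha$ by relation (iii), since $\langle\alpha,h\rangle$-type corrections all cancel for $W$-invariant polynomials. This gives $Z(\HH)=S(V)^W$.

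For the second assertion, I would invoke standard commutative algebra: $S(V)$ is a polynomial ring, $W$ acts on it as a finite reflection group, and by Chevalley's theorem $S(V)$ is a finitely generated free module over $S(V)^W$, with $\mathrm{Max}\,S(V)^W = (\mathrm{Max}\,S(V))/W = V^\ast/W$. Thus maximal ideals of $Z(\HH)=S(V)^W$ are in bijection with $W$-orbits $W\lambda$ for $\lambda\in V^\ast$, and we define $\chi_\lambda$ to be the corresponding maximal ideal. Since $\HH$ is a finitely generated module over its center (again using $\mathbb{C}[W]\otimes S(V)\cong\HH$ and that $S(V)$ is finite over $S(V)^W$), every irreducible $\HH$-module has a well-defined central character $\chi_\lambda$.

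The main obstacle is the bookkeeping in the first computation: moving a polynomial $h\in V$ (or a general element of $S(V)$) past a Weyl group element $w$ produces, via repeated application of relation (iii), not just the leading term $(w\cdot h)\,w$ but a cascade of lower-order correction terms indexed by subwords of a reduced expression for $w$, and one must organize these carefully — e.g. by filtering $\HH$ by the order of the $S(V)$-part and doing induction on $\ell(w)$ — to isolate the top-order obstruction and conclude $p_w=0$ for $w\neq e$. Everything after that is formal. I expect this Lemma to be quoted rather than reproved in detail, so in practice I would simply cite \cite[Lemma 4.5]{L} and recall Chevalley's theorem for the parametrization statement.
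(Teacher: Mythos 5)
The paper offers no proof of this lemma --- it is quoted directly from Lusztig \cite[Lemma 4.5]{L} --- and your sketch reproduces the standard argument from that source: compare leading terms in the length filtration to force $p_w=0$ for $w\neq e$ (using that $S(V)$ is a domain and $W$ acts faithfully on $V$), then note that $p_e\in S(V)$ commutes with $s_\alpha$ iff $s_\alpha(p_e)=p_e$, and obtain the parametrization of central characters from the finiteness of $S(V)$ over $S(V)^W$. This is correct and consistent with the paper's treatment, which, as you anticipate, simply cites the result.
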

 Let $\mathcal{H}$ denote the category of finite-dimensional $\HH$-modules and $\mathcal{H}_\lambda$ denote the subcategory of finite-dimensional $\HH$-modules with central character corresponding to the maximal ideal $\chi_\lambda$. 
 
\end{subsection}
\begin{subsection}{Classification of finite-dimensional irreducible graded affine Hecke algebra modules}
In this section we will review three types of classifications of simple $\HH$-modules. First we will review the geometric construction of standard $\HH$-modules, following \cite{LLS2}. These $\HH$-modules have unique irreducible quotients, and parametrize isomorphism classes of simple $\HH$-modules. We will then review the algebraic approach of \cite{E}, parametrizing simple modules by pairs $(\HH_\p,U)$, where $\HH_\p$ is a parabolic subalgebra of $\HH$ and $U$ is a tempered representation of $\HH_\p$. Finally, we will consider the combinatorial parametrization developed in \cite{BZ} for the case when $\HH$ corresponds to root datum of type $A_n$.

\begin{subsubsection}{Geometric classification}{\label{Hecke-geo-class}}
Let $G$ be the connected complex reductive group with root datum $(X,R,Y,R^\vee,\Pi)$, with Lie algebra $\g$, and flag variety $\mathcal{B}$. Let $\sigma\in \g$ be semisimple, let $  L_\sigma=\{g\in  G:\text{Ad}(g)(\sigma)=\sigma\}$, and let $\mathcal{N}:=\{x\in \g:\text{ad}(x)\text{ is nilpotent}\}$ denote the nilpotent cone in $\g$. Consider the Springer resolution of the nilpotent cone
\beqn
\widetilde{\mathcal{N}}:=\{(x,\bb)\in \mathcal{N}\times \mathcal{B}:x\in\bb\}&\xrightarrow{\mu}& \mathcal{N}\\
(x,\bb)&\mapsto&x
\eeqn
Let $\mathcal{N}^\sigma$ denote the subvariety of $\mathcal{N}$ fixed by ad$(\sigma)$, $\widetilde{\mathcal{N}}^\sigma=\{(x,\bb)\in\N^\sigma\times \mathcal{B}:x,\sigma\in\bb\}$, and $\mu^\sigma$ denote the restriction of $\mu$ to $\widetilde{\mathcal{N}}^\sigma$. Let $\mathcal{C}_{\widetilde{\mathcal{N}}^\sigma}$ denote the constant perverse sheaf on $\widetilde{\mathcal{N}}^\sigma$. Let $$\g_{1}(\sigma):=\{x\in \g:\text{ad}(\sigma)(x)=x\},$$ and $\mathcal{P}_\HH(\sigma)$ denote the set of pairs $(\mathcal{O},\mathcal{E})$ such that $\mathcal{O}$ is an $ L_{\sigma}$ orbit in $ \g_{1}(\sigma)$ and $\mathcal{E}$ is an $L_\sigma$-equivariant local system on $\mathcal{O}$ such that $\text{IC}(\mathcal{O},\mathcal{E})$ appears in the decomposition of $\mu^\sigma_\ast\mathcal{C}_{\widetilde{\mathcal{N}}^\sigma}$. In \cite{LLS2} (cf.~\cite{ChrissGinzburg}), Lusztig constructs an action of $\HH$ on the vector space $\mathcal{H}^\bullet(i^!_x(\mu^\sigma_\ast(\mathcal{C}_{\widetilde{\mathcal{N}}^\sigma})))_\chi$, where $i_x:\{x\}\hookrightarrow \g_1(\sigma)$, and $\chi$ is a representation of the component group (corresponding to the local system $\mathcal{E}$) of $Z_{ G}(\sigma, x)$, the simultaneous centralizer of $\sigma$ and $x$ in $G$. Note that since $\mu^\sigma$ is a proper map, we can identify the above vector space with the homology of the fiber of $\mu^\sigma$ at $x$, as is done in~\cite{ChrissGinzburg}. The $\HH$-module $\mathcal{H}^\bullet(i^!_x(\mu^\sigma_\ast(\mathcal{C}_{\widetilde{\mathcal{N}}^\sigma})))_\chi$ only depends on the $ L_\sigma$ orbit of $x$ and the local system $\mathcal{E}$. Therefore, we will denote this module by $\stdH(\mathcal{O},\mathcal{E})$, and refer to it as a standard $\HH$-module corresponding to the geometric parameters $(\mathcal{O},\mathcal{E})\in \mathcal{P}_\HH(\sigma)$. 
 \begin{rem}
 If $\lambda\in V^\ast$ corresponds to $\sigma\in V$ under the trace form (with $V$ identified with $\h$), then $\stdH(\mathcal{O},\mathcal{E})$ will have central character $\chi_\lambda$ for each $(\mathcal{O},\mathcal{E})\in \mathcal{P}_\HH(\sigma)$. To ease notation, we will denote this set of geometric parameters as $\mathcal{P}_\HH(\lambda)$ in Section 4. 
 \end{rem}
 \begin{thm}{\label{H_quotient}}{\cite[Theorem 8.15]{Lusztig88}}
 Each simple $\HH$-module is isomorphic to the quotient of a standard $\HH$-module. 
 \end{thm}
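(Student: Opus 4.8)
The plan is to derive this from the geometric construction of the standard modules via a rank argument in the Grothendieck group. First I would reduce to a fixed central character: every simple $\HH$-module $V$ is finite-dimensional, since $\HH$ is a finitely generated module over its Noetherian center $Z(\HH)=S(V)^W$, so by Schur's lemma $Z(\HH)$ acts on $V$ through a maximal ideal $\chi_\lambda$ for some $\lambda\in V^\ast$; that is, $V\in\mathcal{H}_\lambda$. Fix $\sigma\in V$ corresponding to $\lambda$ under the trace form. Each $\stdH(\mathcal{O},\mathcal{E})$ with $(\mathcal{O},\mathcal{E})\in\mathcal{P}_\HH(\sigma)$ lies in $\mathcal{H}_\lambda$ and, by construction, has a unique simple quotient $\irrH(\mathcal{O},\mathcal{E})$. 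So it suffices to prove that the assignment $(\mathcal{O},\mathcal{E})\mapsto\irrH(\mathcal{O},\mathcal{E})$ is surjective onto the set of isomorphism classes of simple objects of $\mathcal{H}_\lambda$.

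For this I would combine two inputs. The first is unitriangularity: the composition factors of $\stdH(\mathcal{O},\mathcal{E})$ other than its top quotient $\irrH(\mathcal{O},\mathcal{E})$ (which occurs with multiplicity one) are of the form $\irrH(\mathcal{O}',\mathcal{E}')$ with $\mathcal{O}'$ contained in the boundary $\overline{\mathcal{O}}\setminus\mathcal{O}$ — the usual behaviour of a costalk-of-$\text{IC}$ construction, where the lower factors are supported on strictly smaller orbits. Hence in the Grothendieck group $K(\mathcal{H}_\lambda)$ the transition matrix between the families $\{[\stdH(\mathcal{O},\mathcal{E})]\}$ and $\{[\irrH(\mathcal{O},\mathcal{E})]\}$ is triangular with ones on the diagonal for the partial order induced by orbit closures, so it is invertible over $\mathbb{Z}$; in particular the two families span the same subgroup and the classes $[\irrH(\mathcal{O},\mathcal{E})]$ are $\mathbb{Z}$-linearly independent. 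The second input is that $\{[\stdH(\mathcal{O},\mathcal{E})]: (\mathcal{O},\mathcal{E})\in\mathcal{P}_\HH(\sigma)\}$ in fact spans all of $K(\mathcal{H}_\lambda)$, equivalently, every simple object of $\mathcal{H}_\lambda$ occurs as a composition factor of some standard module. Granting this, $\{[\irrH(\mathcal{O},\mathcal{E})]\}$ is simultaneously independent and spanning, hence a $\mathbb{Z}$-basis of $K(\mathcal{H}_\lambda)$; since the classes of all simple objects of $\mathcal{H}_\lambda$ also form a $\mathbb{Z}$-basis and $\{\irrH(\mathcal{O},\mathcal{E})\}$ is a subfamily of them, the two bases coincide, which is exactly the surjectivity we want.

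The substantive point — and the step I expect to be the main obstacle — is the spanning input. Here one uses the realization of $\HH$, after the appropriate specialization, as a convolution algebra in the equivariant Borel--Moore homology of the Steinberg-type variety $\widetilde{\mathcal{N}}\times_{\mathcal{N}}\widetilde{\mathcal{N}}$. For fixed $\sigma$, the space $\bigoplus_x\mathcal{H}^\bullet\!\big(i^!_x(\mu^\sigma_\ast(\mathcal{C}_{\widetilde{\mathcal{N}}^\sigma}))\big)$, summed over representatives $x$ of the $L_\sigma$-orbits in $\g_1(\sigma)$ and decomposed by the characters $\chi$ of the relevant component groups, is an $\HH$-module all of whose composition factors lie in $\mathcal{H}_\lambda$; the decomposition theorem for the proper map $\mu^\sigma$ writes $\mu^\sigma_\ast\mathcal{C}_{\widetilde{\mathcal{N}}^\sigma}$ as a sum of shifted $\text{IC}(\mathcal{O},\mathcal{E})$ with $(\mathcal{O},\mathcal{E})\in\mathcal{P}_\HH(\sigma)$, and a filtration-by-supports argument identifies the classes arising from this module — up to the unitriangular bookkeeping of the previous paragraph — with the $[\stdH(\mathcal{O},\mathcal{E})]$. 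What remains is the geometric completeness statement: this module already sees \emph{every} simple $\HH$-module with central character $\chi_\lambda$, which follows from faithfulness (after specializing the central character) of the convolution action on the direct sum of homologies of the Springer fibers $(\mu^\sigma)^{-1}(x)$. Proving that faithfulness is where the real work lies; the rest is formal. (Alternatively one could run the same rank argument starting from the algebraic Langlands-type classification of standard $\HH$-modules and then match it with the geometric one, but a completeness statement of this kind is needed in some form either way.)
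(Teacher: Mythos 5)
The paper does not prove this statement: it is imported verbatim from Lusztig~\cite[Theorem 8.15]{Lusztig88}, so there is no internal argument to measure yours against. Your outline is the standard strategy for deducing the classification from the geometric construction, and the formal scaffolding (reduction to a fixed central character, the rank argument in $K(\mathcal{H}_\lambda)$ via a unitriangular transition matrix) is sound. But the proposal is not a proof, for the reason you yourself flag: the spanning/completeness input --- that every simple $\HH$-module with central character $\chi_\lambda$ occurs as a composition factor of some $\stdH(\mathcal{O},\mathcal{E})$, equivalently that the $\HH$-action on $\bigoplus_x\mathcal{H}^\bullet\bigl(i^!_x(\mu^\sigma_\ast(\mathcal{C}_{\widetilde{\mathcal{N}}^\sigma}))\bigr)$ detects every simple object of $\mathcal{H}_\lambda$ --- is precisely the substantive content of Lusztig's theorem, and you leave it unproved. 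Establishing it requires identifying the algebra through which $\mathcal{H}_\lambda$ factors with an Ext-algebra of $\mu^\sigma_\ast\mathcal{C}_{\widetilde{\mathcal{N}}^\sigma}$ (or the convolution algebra of the $\sigma$-fixed Steinberg variety) and controlling its Jacobson radical; none of that is formal. So the attempt is an accurate map of where the proof lives, not the proof itself.

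One genuine misstatement: you assert that the composition factors of $\stdH(\mathcal{O},\mathcal{E})$ below the top quotient are $\irrH(\mathcal{O}',\mathcal{E}')$ with $\mathcal{O}'$ contained in $\overline{\mathcal{O}}\setminus\mathcal{O}$, i.e.\ supported on \emph{smaller} orbits. The support condition goes the other way: since $\stdH(\mathcal{O},\mathcal{E})$ is built from (co)stalks at a point of $\mathcal{O}$ of the summands $\text{IC}(\mathcal{O}',\mathcal{E}')$ of $\mu^\sigma_\ast\mathcal{C}_{\widetilde{\mathcal{N}}^\sigma}$, a factor $\irrH(\mathcal{O}',\mathcal{E}')$ can occur only when $\mathcal{O}\subset\overline{\mathcal{O}'}$, i.e.\ for \emph{larger} orbits --- this is exactly the hypothesis appearing in Theorem \ref{geo_mult_hecke} of the paper. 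The transition matrix is still unitriangular, just for the opposite order, so your rank argument survives, but the geometric picture you describe is inverted.
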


 \begin{thm}{\cite[Corollary 8.18]{LLS2}}
The set of isomorphism classes of simple $\HH$-modules with central character $\chi_\lambda$ is naturally in 1-to-1 correspondence with $\mathcal{P}_\HH(\lambda)$. 
\end{thm}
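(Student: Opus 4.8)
The plan is to deduce the bijection from the geometric construction of standard modules together with Theorem~\ref{H_quotient}, following \cite{LLS2} (cf.~\cite{ChrissGinzburg}). Fix $\lambda\in V^\ast$ and a corresponding semisimple $\sigma\in V\cong\h$, so that $\mathcal{P}_\HH(\lambda)=\mathcal{P}_\HH(\sigma)$. By the remark following the geometric construction, every standard module $\stdH(\mathcal{O},\mathcal{E})$ with $(\mathcal{O},\mathcal{E})\in\mathcal{P}_\HH(\lambda)$ has central character $\chi_\lambda$; conversely, Theorem~\ref{H_quotient} presents any simple module in $\mathcal{H}_\lambda$ as a quotient of some standard module, and since central characters are preserved under subquotients (and $\chi_{\sigma'}=\chi_\lambda$ forces $\sigma'$ to be $W$-conjugate to $\sigma$) that standard module may be taken of the form $\stdH(\mathcal{O},\mathcal{E})$ with $(\mathcal{O},\mathcal{E})\in\mathcal{P}_\HH(\lambda)$. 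It therefore remains to prove: (i) each such $\stdH(\mathcal{O},\mathcal{E})$ is nonzero and has a unique simple quotient $\irrH(\mathcal{O},\mathcal{E})$; and (ii) the assignment $(\mathcal{O},\mathcal{E})\mapsto\irrH(\mathcal{O},\mathcal{E})$ is injective on $\mathcal{P}_\HH(\lambda)$.

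For (i), I would apply the decomposition theorem to the proper morphism $\mu^\sigma$ (with $\widetilde{\mathcal{N}}^\sigma$ smooth, being a vector bundle over $\mathcal{B}^\sigma$), writing $\mu^\sigma_\ast\mathcal{C}_{\widetilde{\mathcal{N}}^\sigma}$ as a direct sum of shifts of intersection cohomology complexes $\mathrm{IC}(\overline{\mathcal{O}'},\mathcal{E}')$, $(\mathcal{O}',\mathcal{E}')\in\mathcal{P}_\HH(\sigma)$, with finite-dimensional graded multiplicity spaces. Taking the costalk $i_x^!(\mu^\sigma_\ast\mathcal{C}_{\widetilde{\mathcal{N}}^\sigma})$ at $x\in\mathcal{O}$ and passing to the $\chi$-isotypic part for the component group of $Z_G(\sigma,x)$ realizes $\stdH(\mathcal{O},\mathcal{E})$ with a filtration indexed by the closure order on $L_\sigma$-orbits: the summand $\mathrm{IC}(\overline{\mathcal{O}},\mathcal{E})$ contributes, in the extreme degree, the fibre $\mathcal{E}_x$ tensored with its multiplicity space, while any other summand $\mathrm{IC}(\overline{\mathcal{O}'},\mathcal{E}')$ with $x\in\overline{\mathcal{O}'}$ and $\mathcal{O}'\neq\mathcal{O}$ contributes only in strictly lower degrees by the (co)support conditions for perverse sheaves. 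The substance of \cite{LLS2} is that the $\HH$-action defined by convolution in Borel--Moore homology is compatible with this filtration: the lower terms span an $\HH$-submodule and the top quotient is a simple $\HH$-module. Since $(\mathcal{O},\mathcal{E})\in\mathcal{P}_\HH(\lambda)$ means exactly that $\mathrm{IC}(\overline{\mathcal{O}},\mathcal{E})$ occurs in the decomposition, this top quotient is nonzero; as the span of the lower terms is then the unique maximal proper submodule, $\irrH(\mathcal{O},\mathcal{E})$ is well defined, nonzero, and the unique simple quotient of $\stdH(\mathcal{O},\mathcal{E})$.

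For (ii), the costalk computation above shows, just as in Corollary~\ref{whit_geo_mult}, that in the Grothendieck group of $\mathcal{H}_\lambda$ one has $[\stdH(\mathcal{O},\mathcal{E})]=[\irrH(\mathcal{O},\mathcal{E})]+\sum m_{\mathcal{O}',\mathcal{E}'}[\irrH(\mathcal{O}',\mathcal{E}')]$, where the sum runs over parameters with $\mathcal{O}'$ strictly larger than $\mathcal{O}$ in the closure order and each $m_{\mathcal{O}',\mathcal{E}'}$ is a dimension of a stalk of an intersection cohomology complex. Hence $\irrH(\mathcal{O},\mathcal{E})$ occurs in $\stdH(\mathcal{O}',\mathcal{E}')$ only for $\mathcal{O}'\leq\mathcal{O}$, with $\mathcal{O}'=\mathcal{O}$ forcing $\mathcal{E}'=\mathcal{E}$ (distinct parameters on a common orbit correspond to non-isomorphic $L_\sigma$-equivariant local systems, i.e.\ to distinct component-group representations $\chi$), so the transition matrix between $\{[\stdH(\mathcal{O},\mathcal{E})]\}$ and $\{[\irrH(\mathcal{O},\mathcal{E})]\}$ is unitriangular. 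Therefore the classes $[\irrH(\mathcal{O},\mathcal{E})]$, $(\mathcal{O},\mathcal{E})\in\mathcal{P}_\HH(\lambda)$, are linearly independent, in particular pairwise non-isomorphic; together with the surjectivity from the first paragraph this yields the asserted bijection, and naturality follows because the whole construction is equivariant under conjugation of $\sigma$.

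The step I expect to be the main obstacle is the compatibility assertion in (i): proving that the support filtration on the $\chi$-isotypic costalk is a filtration by $\HH$-submodules and that its top associated-graded piece is not merely nonzero but irreducible. This is the technical heart of \cite{LLS2} (and \cite{ChrissGinzburg}); it rests on a careful analysis of the convolution action of $\HH$ on the Borel--Moore homology of the twisted Steinberg variety $\widetilde{\mathcal{N}}^\sigma\times_{\mathcal{N}^\sigma}\widetilde{\mathcal{N}}^\sigma$ and its interaction with the stratification, and cannot be replaced by the Grothendieck-group bookkeeping used for (ii).
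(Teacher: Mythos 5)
The paper offers no proof of this statement: it is imported wholesale from Lusztig \cite[Corollary 8.18]{LLS2}, so there is no in-paper argument to measure your attempt against. Judged on its own terms, your outline is a faithful reconstruction of the strategy of the cited sources (\cite{LLS2} and \cite{ChrissGinzburg}, Chapter 8): apply the decomposition theorem to the proper map $\mu^\sigma$ from the smooth variety $\widetilde{\mathcal{N}}^\sigma$, read off the standard module as a ($\chi$-isotypic) costalk with its support filtration, identify the simple modules with the multiplicity spaces of the $\mathrm{IC}$ summands that actually occur (which is exactly the defining condition for membership in $\mathcal{P}_\HH(\lambda)$), and get injectivity from the unitriangularity of the standard-to-simple transition matrix with respect to the closure order --- the same triangularity that underlies Theorem \ref{geo_mult_hecke}. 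Surjectivity via Theorem \ref{H_quotient} plus preservation of central characters is also the right reduction. So the architecture is sound and matches the literature.

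That said, you should be clear that what you have written is a roadmap, not a proof. The decisive assertion --- that the convolution action of $\HH$ on the Borel--Moore homology of the fiber is compatible with the support filtration, that the lower strata span an $\HH$-submodule, and that the top associated-graded piece is an irreducible $\HH$-module rather than merely a nonzero one --- is precisely the content of Lusztig's work, and you defer it entirely to the references (as you acknowledge in your final paragraph). In the Chriss--Ginzburg formulation this step is where one trades the costalk picture for the statement that the simple modules of the convolution algebra $\mathrm{Ext}^\bullet(\mu^\sigma_\ast\mathcal{C},\mu^\sigma_\ast\mathcal{C})$ are exactly the multiplicity spaces of the $\mathrm{IC}$ summands, using the perverse (co)support conditions; your sketch is consistent with that but does not substitute for it. One further small caveat: your injectivity argument in (ii) presupposes that every composition factor of every standard module already lies in the list $\{\irrH(\mathcal{O}',\mathcal{E}')\}$, which again follows only from the deferred step in (i). None of this is a mathematical error --- it is the honest shape of a proof that cannot be made self-contained at the length of a referee note --- but it means the proposal establishes the theorem only modulo the main theorem of \cite{LLS2}, which is exactly how the paper itself treats it.
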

 We will therefore use the notation $\irrH(\mathcal{O},\mathcal{E})$ to denote the irreducible module corresponding to parameter $(\mathcal{O},\mathcal{E})\in\mathcal{P}_\HH(\sigma)$.

\end{subsubsection}
\begin{subsubsection}{Algebraic classification}
Consider a subset $\Pi_\p$ of $\Pi$, and the corresponding roots (coroots) $R_\p$ (resp. $R^\vee_\p$) generated by $\alpha$ (resp. $\alpha^\vee$) for $\alpha\in \Pi_\p$. Then $(X,R_\p,Y,R^\vee_\p,\Pi_\p)$ is a root system. Let $\HH_\p$ be the graded affine Hecke algebra associated to the root system $(X,R_\p,Y,R^\vee_\p,\Pi_\p)$. Let $\mathfrak{a}$ be as in 3.1, and $\HH_{\s}$ denote corresponding subalgebra in the decomposition 
\beq
\HH_\p=\HH_\s\otimes S(\mathfrak{a}).
\eeq
\begin{thm}{\label{evens2.1}}{\cite[Theorem 2.1]{E}}
\begin{enumerate}[(i)]
\item Let $V$ be an irreducible $\HH$-module. Then $V$ is a quotient of $\HH\otimes_{\HH_\p}U$, where $U=\tilde{U}\otimes \mathbb{C}_\nu$ is such that $\tilde{U}$ is a tempered $\HH_{\s}$-module and $\nu\in\mathfrak{a}^\ast$ with $\text{Re}\langle\nu,\alpha\rangle>0$ for all $\alpha\in \Pi-\Pi_\p$. We will refer to $\HH\otimes_{\HH_\p}U$ as a standard module, and denote it by $\stdH(\HH_\p,U)$. 
\item If $U$ is as in (i), then $\HH\otimes_{\HH_\p}U$ has a unique irreducible quotient, which we will denote by $\irrH(\HH_\p,U)$. 
\item If $\irrH(\HH_\p,U)\cong \irrH(\HH_{\p'},U')$, then $\p=\p'$ and $U\cong U'$. 
 \end{enumerate}
\end{thm}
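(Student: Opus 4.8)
The plan is to establish this as the graded-affine-Hecke-algebra analogue of the Langlands classification, proved directly by the method of leading exponents — parallel to Langlands' argument for real reductive groups and Zelevinsky's for $p$-adic $\mathrm{GL}_n$ — using the commutation relation iii together with the identification $Z(\HH)=S(V)^W$. The first task is bookkeeping. Every finite-dimensional $\HH$-module $M$ is finitely generated over the commutative subalgebra $S(V)$, hence decomposes as a finite direct sum $M=\bigoplus_\mu M_{(\mu)}$ of generalized $S(V)$-weight spaces; I call the $\mu\in V^\ast$ occurring here the \emph{exponents} of $M$. Relation iii gives $h\,t_{s_\alpha}=t_{s_\alpha}\,s_\alpha(h)+\langle\alpha,h\rangle$, from which one reads off how $t_w$ moves exponents — namely $M_{(\mu)}$ into $\sum M_{(\mu')}$ over $\mu'$ no more dominant than $w\mu$ — and hence that the $S(V)$-module $\HH\otimes_{\HH_\p}U$ is filtered with exponents exactly the translates $w\mu$ for $w$ ranging over minimal-length coset representatives of $W/W_\p$ and $\mu$ an exponent of $U$, the canonical summand $1\otimes U$ accounting for $w=e$ and for the (unique) most dominant such translate. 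I would also fix the Casselman-type temperedness criterion: a finite-dimensional $\HH_\s$-module is \emph{tempered} iff $\operatorname{Re}\mu$ lies in the closed antidominant cone $\{x\in V^\ast:\langle x,\varpi_\alpha^\vee\rangle\le 0\ \text{for every }\alpha\in\Pi_\p\}$ for each of its exponents $\mu$.

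For (i), take $V$ irreducible and choose an exponent $\mu$ with $\operatorname{Re}\mu$ maximal in the dominance order among all exponents of $V$. Put $\Pi_\p=\{\alpha\in\Pi:\langle\operatorname{Re}\mu,\alpha^\vee\rangle\le 0\}$ and let $\nu\in\mathfrak a^\ast$ be the projection of $\operatorname{Re}\mu$ to the part of $V^\ast$ orthogonal to $\Pi_\p$; maximality of $\mu$ forces $\operatorname{Re}\langle\nu,\alpha\rangle>0$ for all $\alpha\in\Pi-\Pi_\p$, exactly as in the classical argument. Next I would form the Jacquet-type functor along $\Pi_\p$: restrict to the span of the generalized $\mathfrak a$-weight-$\nu$ vectors of $V$, show it carries a natural $\HH_\p$-module structure, and extract from it an irreducible $\HH_\p$-quotient $U=\tilde U\otimes\mathbb C_\nu$ — here the choice of $\mu$ forces the remaining exponents of $\tilde U$ into the antidominant cone of $\Pi_\p$, so $\tilde U$ is tempered. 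Finally, the adjunction between induction $\HH\otimes_{\HH_\p}(-)$ and a suitable Jacquet-restriction functor (the graded-Hecke analogue of Bernstein's second adjointness) converts the nonzero $\HH_\p$-map produced above into a nonzero, hence surjective, $\HH$-map $\HH\otimes_{\HH_\p}U\twoheadrightarrow V$. \textbf{This is the crux}: making the Jacquet construction genuinely functorial and $\HH_\p$-equivariant despite the noncommutativity in iii, and producing its tempered quotient with the sharp positivity on $\nu$, is the technical heart of the statement.

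For (ii), take $U$ as in (i). By the weight computation of the first step, among the exponents $\{w\mu' : w\ \text{a coset representative},\ \mu'\ \text{an exponent of }U\}$ of $\HH\otimes_{\HH_\p}U$, the one with maximal dominant real part is $\mu$ itself, it occurs only in the canonical copy $1\otimes U$, and with multiplicity $\dim U_{(\mu)}$; the \emph{strict} inequality $\operatorname{Re}\langle\nu,\alpha\rangle>0$ for $\alpha\notin\Pi_\p$ is precisely what prevents any competing $w\mu'$ from being as dominant. Since any submodule is $S(V)$-stable, any proper submodule avoids this distinguished weight space, so the sum of all proper submodules is again proper; hence $\HH\otimes_{\HH_\p}U$ has a unique maximal submodule and a unique irreducible quotient $\irrH(\HH_\p,U)$, in which the distinguished weight space survives.

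For (iii), define the \emph{leading exponent} of a finite-dimensional $\HH$-module to be its exponent of maximal dominant real part, when this is unique. By (ii), $\stdH(\HH_\p,U)$ and its quotient $\irrH(\HH_\p,U)$ share the leading exponent $\mu$. From $\mu$ one recovers $\nu$ (its projection onto the regular dominant directions), hence $\Pi_\p$ (the walls $\langle\operatorname{Re}\mu,\alpha^\vee\rangle\le 0$), hence $\mathfrak a$ and $\HH_\s$; restricting $\irrH(\HH_\p,U)$ to $\HH_\p$ and taking its generalized $\mathfrak a$-weight-$\nu$ Jacquet module returns $U$ up to semisimplification, and irreducibility of $\tilde U$ together with the constraint on its central character coming from $Z(\HH_\p)=S(V)^{W_\p}$ pins $\tilde U$ down exactly. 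Therefore $\irrH(\HH_\p,U)\cong\irrH(\HH_{\p'},U')$ forces $\Pi_\p=\Pi_{\p'}$ and $U\cong U'$. The remaining subtlety, secondary to the obstacle in (i), is to check that the $\mathfrak a$-weight-$\nu$ Jacquet module agrees for $\stdH(\HH_\p,U)$ and for $\irrH(\HH_\p,U)$ — but the maximal submodule contributes only strictly-less-dominant exponents to that weight space (again by the bookkeeping of (ii)), so it is killed by passage to the Jacquet module.
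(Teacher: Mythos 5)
This theorem is not proved in the paper at all: it is quoted verbatim from Evens \cite[Theorem 2.1]{E}, so the only comparison available is with Evens' own argument. Your overall strategy --- exponents as generalized $S(V)$-weights, the Casselman-type temperedness criterion, Frobenius reciprocity for $\HH\otimes_{\HH_\p}(-)$, and a leading-exponent analysis for uniqueness and disjointness --- is indeed the strategy of the cited proof, and you have correctly located the crux in the extraction of the tempered-times-positive datum. However, two steps as written would fail.

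First, the recipe in (i) for producing $(\Pi_\p,\nu)$ from a single dominance-maximal exponent $\mu$, namely $\Pi_\p=\{\alpha\in\Pi:\langle\mathrm{Re}\,\mu,\alpha^\vee\rangle\le 0\}$ with $\nu$ the projection of $\mathrm{Re}\,\mu$ onto $\mathfrak{a}^\ast$, is not correct. Because distinct fundamental weights are not orthogonal, that projection need not be strictly positive on $\Pi-\Pi_\p$: already in type $A_2$, for $\mathrm{Re}\,\mu=a\varpi_1+b\varpi_2$ with $a>0$ and $b\ll 0$ your recipe gives $\Pi_\p=\{\alpha_2\}$ and $\nu=(a+\tfrac{b}{2})\varpi_1$, which can be zero or negative (e.g.\ $\mathrm{Re}\,\mu=-\alpha_2$). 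The correct tool is the Langlands geometric lemma: the unique subset $F\subseteq\Pi$ and decomposition $\mathrm{Re}\,\mu=\sum_{\alpha\in F}c_\alpha\alpha+\nu$ with all $c_\alpha\le 0$ and $\nu$ a strictly positive combination of the fundamental weights for $\Pi-F$. This $F$ is produced iteratively and generally differs from the set of nonpositive walls; it is also what makes the uniqueness claims in (ii) and (iii) meaningful, since dominance is only a partial order and ``the exponent of maximal dominant real part'' is not well defined --- what is well defined is the maximal $\mathfrak{a}^\ast$-parameter furnished by the lemma. Second, your adjunction runs the wrong way: the available reciprocity is $\mathrm{Hom}_\HH(\HH\otimes_{\HH_\p}U,V)\cong\mathrm{Hom}_{\HH_\p}(U,V|_{\HH_\p})$, so to exhibit the irreducible $V$ as a quotient of $\stdH(\HH_\p,U)$ you must realize $U$ as an irreducible $\HH_\p$-\emph{sub}module of the restriction of $V$ (or pass to duals), not as a quotient of a Jacquet-type construction followed by an appeal to ``second adjointness,'' which is not available here in the form you invoke. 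With these two repairs the sketch closes and coincides with Evens' proof.
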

\end{subsubsection}

\begin{subsubsection}{Combinatorial classification}\label{multiseg-sec}

Let $\g=\mathfrak{sl}_\ell(\mathbb{C})$. For convenience, let $\h$ be the Cartan subalgebra consisting of diagonal matrices in $\g$, and $\bb$ be the Borel subalgebra consisting of upper triangular matrices in $\g$. Let $\HH$ be the graded affine Hecke algebra of the root system associated with $(\g,\bb)$. 
\begin{rem}
 When we want to emphasize that we are considering the graded affine Hecke algebra corresponding to the root datum of $\mathfrak{sl}_k(\mathbb{C})$ for some $k$, we will use the notation $\HH_k$. 
 \end{rem}
  Finite-dimensional irreducible $\HH$-modules are parametrized by combinatorial objects which we will refer to as multisegments. Define a segment to be a finite increasing sequence of complex numbers, such that any two consecutive terms differ by 1. A multisegment is an ordered collection of segments. Define the support $\underline{\tau}$ of a multisegment $\tau$ to be the multiset of all elements (so as to keep track of multiplicity) of all segments of the multisegment $\tau$. Since we are considering $\mathfrak{sl}_\ell$, we will only require multisegments with zero trace. More precisely, let MS be the set of multisegments $\tau$ such that $\sum_{x\in\underline{\tau}}x=0$. For $\lambda\in\h^\ast$, let $\lambda'\in\h$ be the image of $\lambda$ when we identify $\h^\ast$ with $\h$ using the trace form. For $\lambda\in \h^\ast$, set
\beq
\text{MS}(\lambda)=\{\tau\in\text{MS} | \underline{\tau}=\underline{\lambda}\},
\eeq
where we view $\underline{\lambda}$ as a multiset consisting of the diagonal entries of $\lambda'\in\h$. If $\tau,\sigma\in MS(\lambda)$, then define an equivalence relation by $\tau\sim \sigma$ if $\tau$ and $\sigma$ have the same segments (with a possibly different ordering). Let $MS_\circ(\lambda)$ denote the set of equivalence classes of $MS(\lambda)$. We will proceed by building a standard object in $\HH$ from a class of multisegments $\tilde{\tau}\in MS_\circ(\lambda)$. Let $\tilde{\tau}$ be represented by $\tau=\{\{a_1,a_1+1,a_1+2,\cdots,a_1+(l_1-1)\},\cdots,\{a_r,a_r+1,a_r+2,\cdots,a_r+(l_r-1)\}\}\in MS(\lambda)$ where $\text{Re}(a_i+\frac{1}{2}(l_i-1))\ge  \text{Re}(a_{i+1}+\frac{1}{2}(l_{i+1}-1))$ for all $i$.  Consider $\mathfrak{sl}(l_1)\oplus\cdots\oplus \mathfrak{sl}(l_r)$ as a block diagonal subalgebra of $\g$. Let $R_\lev$, $R^\vee_\lev$, and $\Pi_\lev$ be the set of roots, coroots, and simple roots for $\mathfrak{sl}(l_1)\oplus\cdots\oplus \mathfrak{sl}(l_r)$, respectively, chosen so that $\Pi_\lev\subset \Pi$. Then the graded affine Hecke algebra (denoted $\HH_\p$) associated with the root datum $(X,R_\lev,Y,R^\vee_\lev,\Pi_\lev)$ decomposes as 
\beq
\HH_\p=\HH_{ss}\otimes S(\mathfrak{a}),
\eeq
where $\HH_{ss}\cong \HH_{l_1}\otimes\cdots\otimes \HH_{l_r}$ is isomorphic to the graded affine Hecke algebra of the root datum associated with $\mathfrak{sl}(l_1)\oplus\cdots\oplus \mathfrak{sl}(l_r)$ and $S(\mathfrak{a})$ is as in Section 3.1. \\

Let $\gamma_i=\sum_{k=1}^{l_i} (a_i+k-1)\epsilon_k\in \td^\ast$ be viewed as an element in $\h_{l_i}^\ast$, the dual of the Cartan subalgebra of diagonal matrices intersected with the $\mathfrak{sl}(l_i)$ block of $\g$. Define the discrete series representation $\delta_{\tilde{\tau}}$ of $\HH_{ss}$ to be
\beq
\delta_{\tilde{\tau}}=\delta_1\boxtimes\cdots\boxtimes \delta_r,
\eeq
where $\delta_i= \mathbb{C}_{\gamma_i}$ is the one dimensional representation of $\HH_{l_i}$ (the graded affine Hecke algebra of the root datum associated with the algebra $\mathfrak{sl}(l_i)$) where $\h_{l_i}$ acts by weight $\gamma_i$ and $W_{l_i}$ acts by the sign representation. We will denote the standard $\HH_\ell$-module corresponding to $\tilde{\tau}$ by
\beq
\stdH(\tilde{\tau})=\HH_\ell\otimes_{\HH_\p}(\delta_{\tilde{\tau}}\boxtimes \mathbb{C}_\gamma),
\eeq
where $\mathbb{C}_\gamma$ is the character of $S(\mathfrak{a})$ given by $\gamma=\sum\gamma_i\in\h^\ast$ restricted to $\mathfrak{a}$. Notice that the module $\stdH(\tilde{\tau})$ is generated as a $\mathbb{C}[W]$-module by the vector 
\beq
\mathds{1}=e\otimes 1_1\otimes\cdots\otimes 1_r\otimes 1_\nu,
\eeq 
where $e\in \HH$ is the identity element, $1_i\in \delta_i$ is the identity in $\mathbb{C}_{\gamma_i}$, and $1_\nu$ is the identity in $\mathbb{C}_\nu$. Additionally $\mathds{1}$ is an $\h$-weight vector with weight $\zeta_{\tilde{\tau}}$ given by
\beq
\zeta_{\tilde{\tau}}(e_j^\vee)=a_i+j-\sum_{k=1}^{i-1}l_k-1\quad\text{ for }\sum_{k=1}^{i-1}l_k<j\le \sum_{k=1}^il_k.
\eeq
Here it is notationally easiest to define $\zeta_{\tilde{\tau}}$ as an element of $\td^\ast$, but we will only consider the restriction of $\zeta_{\tilde{\tau}}$ to $\h$. 
\begin{prop}
The standard $\HH_\ell$-module $\stdH(\tilde{\tau})$ has a unique simple quotient denoted $\irrH(\tilde{\tau})$.
\end{prop}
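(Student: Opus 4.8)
The plan is to deduce this from Evens's classification (Theorem~\ref{evens2.1}): it suffices to recognize $\stdH(\tilde\tau)$ as a standard module $\stdH(\HH_{\p'},U')$ in the sense of that theorem, i.e.\ with $U'=\tilde U'\otimes\mathbb{C}_{\nu'}$ where $\tilde U'$ is a tempered $\HH_{\s'}$-module and $\text{Re}\langle\nu',\alpha\rangle>0$ for every $\alpha\in\Pi-\Pi_{\p'}$; then part (ii) of Theorem~\ref{evens2.1} furnishes the desired unique simple quotient, which we name $\irrH(\tilde\tau)$. We already have $\stdH(\tilde\tau)=\HH_\ell\otimes_{\HH_\p}(\delta_{\tilde\tau}\boxtimes\mathbb{C}_\gamma)$ with $\HH_\p=\HH_{ss}\otimes S(\mathfrak{a})$ and $\HH_{ss}\cong\HH_{l_1}\otimes\cdots\otimes\HH_{l_r}$, so the matching is a matter of checking temperedness and a dominance condition.

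First I would verify that $\tilde U:=\delta_{\tilde\tau}=\delta_1\boxtimes\cdots\boxtimes\delta_r$ is tempered as an $\HH_{ss}$-module. Each $\delta_i=\mathbb{C}_{\gamma_i}$ is, as an $\HH_{l_i}$-module, the one-dimensional module on which $\h_{l_i}$ acts by a weight $\mu$ with $\mu(\alpha^\vee)=-1$ for every simple coroot (this is exactly what the defining relations force once $W_{l_i}$ acts by the sign character, and it is satisfied by $\gamma_i$), i.e.\ $\mu=-\rho_{l_i}$; this is the Steinberg module of $\HH_{l_i}$, which is a discrete series module and hence tempered. Thus $\delta_{\tilde\tau}$ is a discrete series, in particular tempered, $\HH_{ss}$-module. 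Secondly, $\mathbb{C}_\gamma$ restricts on $S(\mathfrak{a})$ to $\mathbb{C}_\nu$, where $\nu\in\mathfrak{a}^\ast$ is the block-constant functional equal to $a_i+\tfrac12(l_i-1)$ on the $i$-th block. For a simple root $\alpha$ joining blocks $i$ and $i+1$ — precisely the elements of $\Pi-\Pi_\lev$ — one computes $\langle\nu,\alpha^\vee\rangle=\big(a_i+\tfrac12(l_i-1)\big)-\big(a_{i+1}+\tfrac12(l_{i+1}-1)\big)$, whose real part is $\ge 0$ by the ordering hypothesis imposed on the representative $\tau$.

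If each such pairing has strictly positive real part, Theorem~\ref{evens2.1} applies verbatim to $(\HH_\p,\delta_{\tilde\tau}\boxtimes\mathbb{C}_\gamma)$ and we are done. In general I would first enlarge the parabolic: let $\HH_{\p'}$ be obtained by merging each maximal run of consecutive blocks whose centers $a_i+\tfrac12(l_i-1)$ share a common real part. By transitivity of induction, $\stdH(\tilde\tau)=\HH_\ell\otimes_{\HH_{\p'}}U'$ with $U'=\HH_{\p'}\otimes_{\HH_\p}(\delta_{\tilde\tau}\boxtimes\mathbb{C}_\gamma)$; the $\HH_{\s'}$-component of $U'$ is parabolically induced from a discrete series twisted by a purely imaginary character and is therefore tempered, while the residual character $\nu'$ now pairs with strictly positive real part against every coroot in $\Pi-\Pi_{\p'}$. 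Theorem~\ref{evens2.1}(ii) then yields the unique simple quotient $\irrH(\tilde\tau)$.

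The main obstacle is exactly this gap between the non-strict ordering built into the combinatorial parameters and the strict dominance needed for a Langlands-type classification; closing it requires the merging step together with the standard fact that a module induced from a discrete series twisted by a unitary (here purely imaginary) character of the central part is tempered, which I would cite rather than reprove. One could instead bypass Theorem~\ref{evens2.1} and appeal to the geometric classification of Section~\ref{Hecke-geo-class} via the known matching of Zelevinsky's multisegment data with the geometric parameters $\mathcal{P}_\HH(\lambda)$, but the route through Evens's theorem is the most economical given the constructions above.
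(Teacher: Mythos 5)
Your argument goes through and follows the same route as the paper's: both reduce the statement to Evens's classification, Theorem~\ref{evens2.1}. The difference is one of care. The paper's own proof is a two-line appeal to that theorem which quietly replaces the strict condition $\text{Re}\langle\nu,\alpha\rangle>0$ for $\alpha\in\Pi-\Pi_\p$ (as stated in Theorem~\ref{evens2.1}(i)) by the weak inequality $\text{Re}(\gamma(\alpha^\vee))\ge 0$ that the chosen multisegment representative actually guarantees; it does not address the boundary case, nor does it verify that $\delta_{\tilde{\tau}}$ is tempered. You identify both points: your computation that each $\delta_i$ restricts to $-\rho_{l_i}$ on the semisimple part (hence is the Steinberg, a discrete series, module of $\HH_{l_i}$) is correct, as is the computation $\langle\nu,\alpha^\vee\rangle=(a_i+\tfrac12(l_i-1))-(a_{i+1}+\tfrac12(l_{i+1}-1))$, and your merging of blocks with equal real central character is the standard way to pass from weak to strict dominance in a Langlands-type classification. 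One small caveat on the merging step: for Theorem~\ref{evens2.1}(ii) to produce a \emph{unique} simple quotient one needs the merged tempered module $\tilde{U}'$ to be irreducible, not merely tempered; in type $A$ this is true (parabolic induction of discrete series twisted by a unitary central character is irreducible, by Bernstein--Zelevinsky), but it should be cited alongside the temperedness fact. With that reference added, your proof is complete and in fact repairs a gap that the paper's own proof leaves open.
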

\begin{proof}
This follows directly from Theorem \ref{evens2.1}, which states that $\stdH(\tilde{\tau})$ has a unique irreducible quotient if $\gamma=\sum\gamma_i$ satisfies 
\beq
\text{Re}(\gamma(\alpha^\vee))\ge 0\quad\forall \alpha\in \Pi-\Pi_\lev.
\eeq
This is guaranteed by our choice of multisegment representative of $\tilde{\tau}$ chose to satisfy the condition 
\beq
\text{Re}(a_i+\frac{1}{2}\big(l_i-1)\big)\ge \text{Re}(a_{i+1}+\frac{1}{2}\big(l_{i+1}-1)\big),
\eeq
for all $i$. 
\end{proof}
\begin{thm}{\cite{BZ}}
 Suppose that $\lambda\in \h^\ast$ corresponds to $\sigma\in \h$ by the trace form on $\g$. There is a one-to-one correspondence between multisegments $\text{MS}_\circ(\lambda)$ and geometric parameters $\mathcal{P}_\HH(\sigma)$. 
\end{thm}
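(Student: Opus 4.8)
The plan is to construct the bijection in two movements. First I would match the $L_\sigma$-orbits on $\g_1(\sigma)$ with the multisegment classes $\text{MS}_\circ(\lambda)$ by way of Gabriel's theorem for a linear type $A$ quiver. Then I would show, by a counting argument using the classification of simple $\HH$-modules already recorded above, that the geometric parameter set $\mathcal{P}_\HH(\sigma)$ is precisely the set of those orbits (equivalently, that only the trivial local system occurs on each, and that every orbit occurs).

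For the first movement, diagonalize $\sigma\in\h$ and let $a_1,\dots,a_s$ be its distinct eigenvalues with multiplicities $m_1,\dots,m_s$. Since the degree-one eigenspace $\g_1(\sigma)=\{x:\mathrm{ad}(\sigma)x=x\}$ has zero diagonal, it is unchanged under $\mathfrak{sl}_\ell\hookrightarrow\mathfrak{gl}_\ell$, and $L_\sigma=Z_{SL_\ell}(\sigma)$ has the same orbits on it as $Z_{GL_\ell}(\sigma)=\prod_iGL_{m_i}$ (the two groups differ only by central scalars, which act trivially on $\g$). Now $\g_1(\sigma)$ is spanned by the matrix units $E_{jk}$ with $\sigma_j-\sigma_k=1$, so as a $\prod_iGL_{m_i}$-module it is the representation space $\bigoplus_{a_j=a_i+1}\mathrm{Hom}(\mathbb{C}^{m_i},\mathbb{C}^{m_j})$ of the quiver $Q_\sigma$ whose vertices are the distinct eigenvalues of $\sigma$ and whose arrows go $a\to a+1$ whenever both $a$ and $a+1$ are eigenvalues; since only jumps of exactly $1$ contribute, $Q_\sigma$ is a disjoint union of linear type $A$ quivers. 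Hence $L_\sigma$-orbits on $\g_1(\sigma)$ are in bijection with isomorphism classes of representations of $Q_\sigma$ of dimension vector $(m_i)_i$. By Gabriel's theorem the indecomposable representations are the interval modules, which are exactly the segments of Section~\ref{multiseg-sec} (the segment $\{a,a+1,\dots,a+l-1\}$ being the indecomposable supported on that run of consecutive vertices), so an isomorphism class of $Q_\sigma$-representations is an unordered finite collection of segments, i.e. an element of $\text{MS}_\circ$, and the dimension-vector constraint is precisely the support condition $\underline{\tau}=\underline{\lambda}$ defining $\text{MS}_\circ(\lambda)$ (under the trace form, $\underline{\lambda}$ is the multiset of eigenvalues of $\sigma$, which is the dimension-vector data of $Q_\sigma$). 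This is the graded-nilpotent-orbit dictionary of~\cite{Z}, and it is compatible with the construction of $\stdH(\tilde\tau)$ in Section~\ref{multiseg-sec}, which reads the block-diagonal subalgebra $\mathfrak{sl}(l_1)\oplus\cdots\oplus\mathfrak{sl}(l_r)$ off the segments.

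For the second movement, note first that the decomposition of $\mu^\sigma_\ast\mathcal{C}_{\widetilde{\mathcal{N}}^\sigma}$ is intrinsic to the flag variety of $\g$, and when it is computed in the $GL_\ell$-picture the stabilizers $Z_{GL_\ell}(\sigma,x)$ are automorphism groups of quiver representations, hence connected (the unit group of the finite-dimensional algebra $\mathrm{End}(x)$ is Zariski-open in an affine space); so no nontrivial local system can occur and $(\mathcal{O},\mathcal{E})\mapsto\mathcal{O}$ injects $\mathcal{P}_\HH(\sigma)$ into the set of $L_\sigma$-orbits on $\g_1(\sigma)$. On the other hand, Section~\ref{multiseg-sec} produces, for each class $\tilde\tau\in\text{MS}_\circ(\lambda)$, a simple module $\irrH(\tilde\tau)$ with central character $\chi_\lambda$, and these are pairwise non-isomorphic by Theorem~\ref{evens2.1}(iii) (distinct classes give distinct parabolic-plus-tempered data $(\HH_\p,U)$); since the number of simple $\HH$-modules with central character $\chi_\lambda$ equals $|\mathcal{P}_\HH(\sigma)|$, we obtain $|\text{MS}_\circ(\lambda)|\le|\mathcal{P}_\HH(\sigma)|\le\#\{L_\sigma\text{-orbits on }\g_1(\sigma)\}=|\text{MS}_\circ(\lambda)|$. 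Every inequality is therefore an equality: $\mathcal{P}_\HH(\sigma)$ is exactly the set of orbits, each carrying the trivial local system, and composing with the first movement yields the desired bijection $\mathcal{P}_\HH(\sigma)\xrightarrow{\sim}\text{MS}_\circ(\lambda)$.

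The quiver part is routine (Gabriel's theorem plus linear algebra), and the counting trick sidesteps any direct computation of graded Springer fibres. I expect the main obstacle to lie in the second movement: making the $SL_\ell$-versus-$GL_\ell$ comparison of centralizers and component groups fully precise, and justifying carefully that the intrinsic perverse decomposition of $\mu^\sigma_\ast\mathcal{C}_{\widetilde{\mathcal{N}}^\sigma}$ involves only constant sheaves in type $A$. A self-contained treatment of this last point would invoke semismallness of the graded Springer resolution together with the triviality of type $A$ Springer local systems, so it may be cleanest to cite~\cite{LLS2} and~\cite{Lusztig88} here rather than reprove it; the bijection itself on the combinatorial side is the one of~\cite{BZ},~\cite{Z}.
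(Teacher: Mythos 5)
Your proposal is correct, and it supplies an actual argument where the paper gives none: the statement is quoted from Bernstein--Zelevinsky \cite{BZ} with no proof, and the only supporting material in the text is the explicit assignment $\tau\mapsto X_\tau$ in the discussion following Theorem \ref{geo_mult_hecke} together with the unproved assertion that only the trivial local system occurs in the decomposition of $\mu^\sigma_\ast\mathcal{C}_{\widetilde{\mathcal{N}}^\sigma}$. Your first movement (identifying $\g_1(\sigma)$ with the representation space of a disjoint union of linear type $A$ quivers and invoking Gabriel's theorem) is the standard proof of the orbit--multisegment bijection, and your interval modules are exactly the paper's representatives $x_\tau$, so the two descriptions match on the nose; the paper instead simply exhibits $x_\tau$ and cites \cite{Z81} and \cite{CT} for the bijection. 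Your second movement is genuinely different in an interesting way: the sandwich $|\text{MS}_\circ(\lambda)|\le|\mathcal{P}_\HH(\sigma)|\le\#\{L_\sigma\text{-orbits}\}$ simultaneously establishes that every orbit carries a parameter (i.e.\ every $\text{IC}(\mathcal{O},\mathbb{C}_\mathcal{O})$ really occurs in the pushforward) and that no nontrivial local system does, purely by counting against the Langlands classification, whereas the geometric route would require semismallness of $\mu^\sigma$ and triviality of type $A$ Springer local systems from \cite{LLS2}. The one step you should firm up is the lower bound: Theorem \ref{evens2.1}(iii) as stated requires the strict inequality $\text{Re}\langle\nu,\alpha\rangle>0$ on $\Pi-\Pi_\p$, and when two segments have equal centers the chosen representative only achieves $\ge$, so one must enlarge the tempered datum to absorb the tied segments before concluding that distinct classes of $\text{MS}_\circ(\lambda)$ yield non-isomorphic simples (the same point is glossed over in the paper's proposition asserting that $\stdH(\tilde{\tau})$ has a unique simple quotient). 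With that standard refinement your argument is complete, and the $SL_\ell$-versus-$GL_\ell$ comparison of centralizers you flag as the delicate point is handled correctly: the center acts trivially on $\g$, so the connectedness of $Z_{GL_\ell}(\sigma,x)$ as the unit group of $\text{End}(x)$ forces the component group of $Z_{SL_\ell}(\sigma,x)$ to act trivially on the relevant stalks.
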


Following~\cite{AS}, we will now construct an $\HH_\ell$-module from a pair $\lambda,\mu\in\h^\ast$ with $\lambda-\mu\in P(V^{\otimes \ell})=\{\gamma\in\h^\ast: (V^{\otimes \ell})_\gamma\neq 0\}$. There exists $(\ell_1,\cdots,\ell_n)\in\mathbb{Z}^n_{\ge 0}$ so that $\ell=\sum_i\ell_i$ and 
\begin{equation}
\lambda-\mu\equiv \sum_{i=1}^n\ell_i\e_i\text{ mod}\mathbb{C}\bigg(\sum_{i=1}^n\e_i\bigg)
\end{equation}
Using the pair $\lambda,\mu\in\h^\ast$, with $\lambda$ dominant, we define the following multisegment
\beq
\delta_{\lambda,\mu}=\{\{(\mu+\rho)(\e_1^\vee),\cdots,(\mu+\rho)(\e_1^\vee)+\ell_1-1\},\cdots,\{(\mu+\rho)(\e_n^\vee),\cdots,(\mu+\rho)(\e_n^\vee)+\ell_n-1\}\}
\eeq
and set $\stdH(\lambda,\mu):=\stdH(\tilde{\delta}_{\lambda,\mu})$. The standard module $\stdH(\lambda,\mu)$ is a cyclic module with a cyclic weight vector $\mathds{1}$, whose weight $\zeta_{\lambda,\mu}$ is given by
\beqn
\zeta_{\lambda,\mu}(\e_j^\vee)=(\mu+\rho)(\e_i^\vee)+j-\sum_{r=1}^{i-1}\ell_r-1\quad\text{ for }\sum_{r=1}^{i-1}\ell_r<j\le\sum_{r=1}^{i}\ell_r.
\eeqn
Since $\lambda$ is dominant, $\stdH(\lambda,\mu)$ has a unique simple quotient denoted $\irrH(\lambda,\mu)$. 
 \begin{lem}{\label{dim_eq}}{\cite[Lemma 3.3.2]{AS}}
Let $P(V^{\otimes \ell})$ be the set of nontrivial weights of $V^{\otimes \ell}$. If $\lambda$ and $\mu$ are integral weights such that $\lambda-\mu\in P(V^{\otimes \ell})$, then 
 \beq
 \text{dim }\stdH(\lambda,\mu)=\text{dim}\big(V^{\otimes \ell}\big)_{\lambda-\mu}.
 \eeq
 \end{lem}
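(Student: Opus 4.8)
The plan is to show that both sides of the asserted equality are equal to the multinomial coefficient $\ell!/(\ell_1!\cdots\ell_n!)$, where $(\ell_1,\dots,\ell_n)\in\mathbb{Z}_{\ge 0}^n$ is the tuple with $\sum_i\ell_i=\ell$ attached to $\lambda-\mu$ by the congruence $\lambda-\mu\equiv\sum_i\ell_i\e_i$ modulo $\mathbb{C}(\sum_i\e_i)$ that enters the definition of $\stdH(\lambda,\mu)$; the hypothesis $\lambda-\mu\in P(V^{\otimes\ell})$ is precisely what guarantees that such a tuple exists.

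I would first treat the right-hand side. The standard representation $V=\mathbb{C}^n$ has weights $\e_1,\dots,\e_n$, each of multiplicity one, so a weight basis of $V^{\otimes\ell}$ consists of the pure tensors $v_{i_1}\otimes\cdots\otimes v_{i_\ell}$, with $v_{i_1}\otimes\cdots\otimes v_{i_\ell}$ of weight $\e_{i_1}+\cdots+\e_{i_\ell}$. Such a tensor lies in $(V^{\otimes\ell})_{\lambda-\mu}$ exactly when, for each $i$, precisely $\ell_i$ of the indices $i_1,\dots,i_\ell$ are equal to $i$: here one uses that two weights of $V^{\otimes\ell}$ which are congruent modulo $\mathbb{C}(\sum_i\e_i)$ and have the same total degree $\ell$ must coincide, so the congruence pins down the weight space unambiguously. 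Counting these tensors gives $\dim (V^{\otimes\ell})_{\lambda-\mu}=\ell!/(\ell_1!\cdots\ell_n!)$.

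Next I would compute the left-hand side from the construction. By definition $\stdH(\lambda,\mu)=\stdH(\tilde{\delta}_{\lambda,\mu})=\HH_\ell\otimes_{\HH_\p}U$ with $U=\delta_{\tilde{\delta}_{\lambda,\mu}}\boxtimes\mathbb{C}_\gamma$, where the multisegment $\delta_{\lambda,\mu}$ has segments of lengths $\ell_1,\dots,\ell_n$ (zero-length segments being simply discarded, contributing trivially), so that the parabolic subgroup attached to $\HH_\p$ is $W_\p=S_{\ell_1}\times\cdots\times S_{\ell_n}\subseteq S_\ell=W_\ell$. Applying the vector-space identification $\mathbb{C}[W]\otimes S(V)\xrightarrow{\sim}\HH$, $w\otimes h\mapsto t_wt_h$, from the remark following the definition of the graded affine Hecke algebra, both to $\HH_\ell$ and to $\HH_\p$ — which have the same polynomial part $S(V)$ because they are built from the same lattices $X,Y$ — shows that $\HH_\ell=\bigoplus_j t_{w_j}\HH_\p$ is free as a right $\HH_\p$-module on any set $\{w_j\}$ of representatives for the cosets $W_\ell/W_\p$, of rank $[W_\ell:W_\p]=\ell!/(\ell_1!\cdots\ell_n!)$; since $w\mapsto t_w$ is an algebra homomorphism on $\mathbb{C}[W]$ we have $t_ut_v=t_{uv}$ for all $u,v\in W$, so no length condition on the $w_j$ is needed. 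Hence $\stdH(\lambda,\mu)\cong U^{\oplus[W_\ell:W_\p]}$ as a vector space, and since each $\delta_i=\mathbb{C}_{\gamma_i}$ and $\mathbb{C}_\gamma$ are one-dimensional, $U$ is one-dimensional, giving $\dim\stdH(\lambda,\mu)=[W_\ell:W_\p]=\ell!/(\ell_1!\cdots\ell_n!)$, which agrees with the right-hand side.

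Since the argument is essentially a dimension count, there is no genuine obstacle; the only things demanding attention are bookkeeping ones: matching the parabolic $W_\p$ with the tuple of segment lengths and handling zero-length segments, observing that neither the ordering condition on the segments of $\delta_{\lambda,\mu}$ nor the dominance of $\lambda$ enters the dimension (they are needed only to ensure the existence of a unique simple quotient $\irrH(\lambda,\mu)$), and confirming that passing "modulo $\mathbb{C}(\sum_i\e_i)$" does not alter which weight space of $V^{\otimes\ell}$ is being counted.
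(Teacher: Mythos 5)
Your argument is correct. The paper does not prove this lemma itself but simply cites \cite[Lemma 3.3.2]{AS}, and your computation is exactly the standard verification behind that citation: both sides equal the multinomial coefficient $\ell!/(\ell_1!\cdots\ell_n!)$, the left side because $\HH_\ell$ is free as a right $\HH_\p$-module of rank $[S_\ell:S_{\ell_1}\times\cdots\times S_{\ell_n}]$ (using $t_ut_v=t_{uv}$ and the common polynomial part $S(V)$) and $U$ is one-dimensional, and the right side by counting pure tensors, with the congruence modulo $\mathbb{C}\bigl(\sum_i\e_i\bigr)$ pinned down by the fixed total degree $\ell$ as you note.
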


\begin{rem}
Notice that the multisegment $\delta_{\lambda,w\circ\lambda}$ is an element of $MS(\lambda+\rho)$, and the $\HH_\ell$-module $\stdH(\lambda,w\bullet\lambda)$ has central character $\chi_{\lambda+\rho}$. 
\end{rem}
\end{subsubsection}
\end{subsection}
\begin{subsection}{The composition series of standard modules}
We will now review the $p$-adic analogue of the Kazhdan-Lusztig conjectures. Using Lusztig's geometric realization of $\HH$-modules (\cite{LLS2}), we can relate the multiplicity of simple $\HH$-modules in the composition series of standard $\HH$-modules to stalks of intersection cohomology complexes. 
\begin{thm}{\label{geo_mult_hecke}}{\cite[Corollary 10.7]{LLS2}}
Suppose we have two geometric parameters $(\mathcal{O},\mathcal{E}),(\mathcal{O}',\mathcal{E}')\in\mathcal{P}_\HH(\sigma)$ such that $\mathcal{O}\subset\overline{\mathcal{O}'}$. Let $\text{IC}_y(\mathcal{O}',\mathcal{E}')$ denote the stalk at $y\in\mathcal{O}\subset\overline{\mathcal{O}'}$ of the intersection cohomology complex on $\mathcal{O}'$ corresponding to the local system $\mathcal{E}'$. Let $H^i(\text{IC}_y(\mathcal{O}',\mathcal{E}'))_\mathcal{E}$ denote the $\mathcal{E}$ isotypic component of $H^i(\text{IC}_y(\mathcal{O}',\mathcal{E}'))$, where we view $\mathcal{E}$ as a representation of the component group of $Z_G(\sigma,x)$ (where $\sigma$ and $x$ are as in Section \ref{Hecke-geo-class}). Then
\beq
[\stdH(\mathcal{O},\mathcal{E}):\irrH(\mathcal{O}',\mathcal{E}')]=\sum_i\text{dim}H^i(\text{IC}_y(\mathcal{O}',\mathcal{E}'))_\mathcal{E}.
\eeq
\end{thm}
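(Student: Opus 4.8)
The plan is to obtain the multiplicity formula from Lusztig's geometric realization of standard and simple $\HH$-modules together with the Beilinson--Bernstein--Deligne decomposition theorem, following the convolution-theoretic template of Kazhdan--Lusztig as organized in \cite{ChrissGinzburg}. The first step is to apply the decomposition theorem to the proper morphism $\mu^\sigma$ of Section \ref{Hecke-geo-class}. Since $\mathcal{C}_{\widetilde{\mathcal{N}}^\sigma}$ is (a shift of) a semisimple perverse sheaf, one obtains an $L_\sigma$-equivariant decomposition
\beq
\mu^\sigma_\ast\mathcal{C}_{\widetilde{\mathcal{N}}^\sigma}\cong\bigoplus_{(\mathcal{O}',\mathcal{E}')\in\mathcal{P}_\HH(\sigma)}\text{IC}(\mathcal{O}',\mathcal{E}')\otimes_{\mathbb{C}}M_{(\mathcal{O}',\mathcal{E}')},
\eeq
where each multiplicity space $M_{(\mathcal{O}',\mathcal{E}')}$ is finite-dimensional and graded, and the component group of $Z_G(\sigma,x)$ acts on it through $\mathcal{E}'$.

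The second step identifies these multiplicity spaces. The content of \cite{LLS2} recalled in Section \ref{Hecke-geo-class}---in particular \cite[Corollary 8.18]{LLS2} together with Theorem \ref{H_quotient}---is that, forgetting the grading, $M_{(\mathcal{O}',\mathcal{E}')}$ carries a natural $\HH$-module structure isomorphic to $\irrH(\mathcal{O}',\mathcal{E}')$, and that these exhaust the simple $\HH$-modules with central character $\chi_\lambda$. Granting this, I would take costalks at a point $y\in\mathcal{O}$ and extract the $\mathcal{E}$-isotypic component. Because $\stdH(\mathcal{O},\mathcal{E})$ is by definition $\mathcal{H}^\bullet(i^!_y(\mu^\sigma_\ast\mathcal{C}_{\widetilde{\mathcal{N}}^\sigma}))_\mathcal{E}$, the displayed decomposition yields, in the Grothendieck group $K_0(\mathcal{H}_\lambda)$, the identity
\beq
[\stdH(\mathcal{O},\mathcal{E})]=\sum_{(\mathcal{O}',\mathcal{E}')}\Big(\sum_i\text{dim}\,\mathcal{H}^i\big(i^!_y\text{IC}(\mathcal{O}',\mathcal{E}')\big)_\mathcal{E}\Big)\,[\irrH(\mathcal{O}',\mathcal{E}')].
\eeq

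To finish, I would check that these scalars are exactly the composition multiplicities. By Verdier self-duality of $\text{IC}(\mathcal{O}',\mathcal{E}')$ (up to the usual twist) the costalk cohomology $\mathcal{H}^i(i^!_y\text{IC}(\mathcal{O}',\mathcal{E}'))_\mathcal{E}$ has the same dimensions as the stalk cohomology $H^i(\text{IC}_y(\mathcal{O}',\mathcal{E}'))_\mathcal{E}$ appearing in the statement. Moreover the support conditions defining intersection cohomology show this space vanishes unless $y\in\overline{\mathcal{O}'}$, i.e.\ unless $\mathcal{O}\subseteq\overline{\mathcal{O}'}$, and that when $(\mathcal{O},\mathcal{E})=(\mathcal{O}',\mathcal{E}')$ it is one-dimensional, concentrated in degree $0$. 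Hence the transition matrix between $\{[\stdH(\mathcal{O},\mathcal{E})]\}$ and $\{[\irrH(\mathcal{O},\mathcal{E})]\}$ is unitriangular for the closure order, so both families are $\mathbb{Z}$-bases of $K_0(\mathcal{H}_\lambda)$, and the coefficient of $[\irrH(\mathcal{O}',\mathcal{E}')]$ in the identity above is precisely $[\stdH(\mathcal{O},\mathcal{E}):\irrH(\mathcal{O}',\mathcal{E}')]$, which is the claim.

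The step I expect to be the real obstacle is the identification in the second paragraph: that the decomposition-theorem multiplicity spaces $M_{(\mathcal{O}',\mathcal{E}')}$ are the irreducible $\HH$-modules $\irrH(\mathcal{O}',\mathcal{E}')$, compatibly with the $\HH$-action. This amounts to tracking the convolution action of $\HH$ on the equivariant Borel--Moore homology of the relevant Steinberg-type variety through the decomposition, and is the substantive part of Lusztig's work in \cite{LLS2} and \cite{Lusztig88}; I would cite it rather than reprove it. Once it is in hand, the remaining points---the Verdier duality comparison, the support vanishing, and the unitriangular inversion---are routine.
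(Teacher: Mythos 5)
The paper does not prove this statement at all: it is imported verbatim as \cite[Corollary 10.7]{LLS2}, so there is no internal argument to compare yours against. Your outline is a correct account of how the result is actually established in the literature via the Chriss--Ginzburg/Kazhdan--Lusztig convolution formalism: apply the decomposition theorem to the proper map $\mu^\sigma$, identify the multiplicity spaces with the simple modules $\irrH(\mathcal{O}',\mathcal{E}')$ (the substantive input, which you rightly defer to \cite{LLS2} and \cite{Lusztig88}), take (co)stalks and isotypic components, and invert a unitriangular transition matrix. Two small imprecisions are worth fixing. First, in your opening display the component group of $Z_G(\sigma,x)$ for $x\in\mathcal{O}$ does not act on the multiplicity space $M_{(\mathcal{O}',\mathcal{E}')}$ ``through $\mathcal{E}'$''; the local system $\mathcal{E}'$ corresponds to a representation of the component group at a point of $\mathcal{O}'$, whereas the $\mathcal{E}$-isotypic decomposition in the statement is taken with respect to the component group at $y\in\mathcal{O}$ acting on the (co)stalk, exactly as you later do in the second paragraph --- so the remark in the first paragraph should be deleted or rephrased rather than relied upon. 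Second, the Verdier duality comparison between $\mathcal{H}^i(i_y^!\text{IC}(\mathcal{O}',\mathcal{E}'))$ and $H^i(\text{IC}_y(\mathcal{O}',\mathcal{E}'))$ holds only after dualizing the component-group representation and shifting degrees; since the statement sums over all $i$ and the component-group representations occurring here are self-dual in the type $A$ setting the paper ultimately uses (only the trivial local system appears), the total dimensions agree, but as written the degree-by-degree claim is not quite right. Neither point affects the conclusion, and your identification of where the real mathematical weight lies is accurate.
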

When $\HH$ is a graded affine Hecke algebra corresponding to a root datum of type $A_n$, Theorem \ref{geo_mult_hecke} can be reformulated in terms of the combinatorial parametrization of \cite{BZ}, which proves a conjecture of Zelevinsky~\cite{Z}. We will thus proceed with the notation of Section 3.2.3. Suppose $\tau\in \text{MS}_\circ(\lambda)$ is a multisegment consisting of segments of size $l_1$ through $l_r$. Let $\sigma\in\h$ correspond to $\lambda\in \h^\ast$ by the trace form. Let $x_\tau\in\g_1(\sigma)$ denote the nilpotent matrix in $\g$ with a nonzero entry in the $i^{\text{th}}$ row and $i+1^{\text{th}}$ column for each $\sum_{j=1}^kl_j\le i< \sum_{j=1}^{k+1}l_{j}$, and zero entries elsewhere. Let $X_\tau$ be the $L_\sigma $ orbit of $x_\tau$. This gives us a bijection between $\text{MS}_\circ(\lambda)$ and $L_\sigma $ orbits on $\g_1(\sigma)$ (cf. \cite{Z81},\cite[Equation 4.2]{CT})
\beqn
\text{MS}_\circ(\lambda)&\longleftrightarrow& L_\sigma\backslash \g_1(\sigma)\\
\tau&\mapsto& X_\tau
\eeqn
For $G=SL_n$, we have that the component group of $Z_G(\sigma, x)$ is nontrivial. However, the only irreducible $L_\sigma$-equivariant local system which appears in the decomposition of $\mu_\ast^\sigma\mathcal{C}_{\widetilde{N}^\sigma}$ is the trivial local system. So the multisegment $\tau\in \text{MS}_\circ(\lambda)$ corresponds to geometric parameter $(X_\tau, \mathbb{C}_{X_\tau})\in\mathcal{P}_\HH(\sigma)$. 
\begin{cor}
For $\tau,\gamma\in \text{MS}_\circ(\lambda)$, we have that
\beq
[\stdH(\tau):\irrH(\gamma)]=\sum_{i\ge0}\text{dim}H^i(\text{IC}_y(X_\gamma,\mathbb{C}_{X_\gamma})),
\eeq
for $y\in {X_\tau}$.

\end{cor}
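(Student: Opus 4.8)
The plan is to deduce this directly from Theorem \ref{geo_mult_hecke}, translating every term through the bijection $\text{MS}_\circ(\lambda)\longleftrightarrow L_\sigma\backslash\g_1(\sigma)$, $\tau\mapsto X_\tau$, recalled above. Thus the proof is essentially a dictionary argument: nothing new is proved, but each object on the left must be matched with its geometric avatar on the right.

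\textbf{Step 1: identify the modules.} First I would check that, under the bijection, the combinatorially defined standard module $\stdH(\tau)=\HH_\ell\otimes_{\HH_\p}(\delta_{\tilde\tau}\boxtimes\mathbb{C}_\gamma)$ agrees with Lusztig's geometric standard module $\stdH(X_\tau,\mathbb{C}_{X_\tau})$, and likewise that $\irrH(\tau)\cong\irrH(X_\tau,\mathbb{C}_{X_\tau})$. By construction $x_\tau$ is regular in the block-diagonal Levi $\mathfrak{sl}(l_1)\oplus\cdots\oplus\mathfrak{sl}(l_r)$ attached to the segment sizes of $\tau$; on the algebraic side this Levi corresponds to the parabolic subalgebra $\HH_\p$ of Section 3.2.3, and the discrete-series datum $\delta_{\tilde\tau}=\delta_1\boxtimes\cdots\boxtimes\delta_r$ is precisely the tempered $\HH_\s$-module attached to the regular nilpotent in each $\mathfrak{sl}(l_i)$ block. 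Matching Evens' classification (Theorem \ref{evens2.1}) with Lusztig's geometric one, for type $A_n$, is the content of the theorem of \cite{BZ}, and the normalizations fixed in Section 3.2.3 make the two parametrizations of standard (resp.\ simple) modules coincide. Passing to unique irreducible quotients then identifies the simple modules as well.

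\textbf{Step 2: remove the local system.} For $G=SL_n$ the component group of $Z_G(\sigma,x_\gamma)$ need not be trivial, but, as recorded in the discussion preceding the statement, the only $L_\sigma$-equivariant local system occurring in the decomposition of $\mu^\sigma_\ast\mathcal{C}_{\widetilde{\mathcal{N}}^\sigma}$ is the trivial one. Hence in the formula of Theorem \ref{geo_mult_hecke}, with $\mathcal{E}=\mathbb{C}_{X_\tau}$ and $\mathcal{E}'=\mathbb{C}_{X_\gamma}$ the trivial systems, the $\mathcal{E}$-isotypic component $H^i(\text{IC}_y(X_\gamma,\mathbb{C}_{X_\gamma}))_{\mathcal{E}}$ is simply the full stalk cohomology $H^i(\text{IC}_y(X_\gamma,\mathbb{C}_{X_\gamma}))$.

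\textbf{Step 3: conclude, and the main obstacle.} If $X_\tau\subset\overline{X_\gamma}$, the claim is exactly Theorem \ref{geo_mult_hecke} applied to $(\mathcal{O},\mathcal{E})=(X_\tau,\mathbb{C}_{X_\tau})$ and $(\mathcal{O}',\mathcal{E}')=(X_\gamma,\mathbb{C}_{X_\gamma})$, via Steps 1 and 2. If $X_\tau\not\subset\overline{X_\gamma}$, then $y\in X_\tau$ lies outside $\overline{X_\gamma}$, so $\text{IC}_y(X_\gamma,\mathbb{C}_{X_\gamma})=0$ and the right-hand side vanishes; on the other hand every composition factor $\irrH(X_{\gamma'},\mathbb{C}_{X_{\gamma'}})$ of $\stdH(X_\tau,\mathbb{C}_{X_\tau})$ satisfies $X_\tau\subset\overline{X_{\gamma'}}$ (a standard module is supported on the closure of its orbit), so $\irrH(\gamma)\cong\irrH(X_\gamma,\mathbb{C}_{X_\gamma})$ does not occur and the left-hand side vanishes too. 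The substantive point is Step 1: making precise that the algebraically and combinatorially defined induced modules of Section 3.2.3 are the geometric standard modules of \cite{LLS2}. This is a bookkeeping matter that reduces to \cite{BZ} and the compatibility of the two classifications of simple $\HH$-modules, but it is where all the content sits; Steps 2 and 3 are formal.
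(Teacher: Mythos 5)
Your argument is correct and is exactly the route the paper intends: the corollary is stated without proof as an immediate consequence of Theorem \ref{geo_mult_hecke}, the bijection $\tau\mapsto X_\tau$ of \cite{BZ}, and the observation that only the trivial local system occurs for $SL_n$. Your Step 1 (matching the combinatorial/algebraic standard modules of Section 3.2.3 with Lusztig's geometric ones) and Step 3 (the vanishing of both sides when $X_\tau\not\subset\overline{X_\gamma}$, via the support condition on composition factors of standard modules) make explicit precisely the points the paper leaves implicit, and both are handled correctly.
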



\end{subsection}

\begin{subsection}{$\HH_\ell$ action on $X\otimes V^{\otimes \ell}$}\label{hact}
To begin, let $B=\{E_i\}$ ($B^\ast=\{E_i^\ast\}$)  be an orthonormal basis (dual basis) of $\g$ with respect to the trace form. For notational convenience, let
\beqn
\tau_r(x)&=&1^{\otimes r-1}\otimes x\otimes 1^{\otimes \ell-r+1}\in U(\g)^{\otimes \ell+1},\\
\tau_{r,s}(y,z)&=&1^{\otimes r-1}\otimes  y\otimes 1^{\otimes s-r-1}\otimes z\otimes 1^{\otimes \ell-s+1} \in U(\g)^{\otimes\ell+1}.
\eeqn
We will define an operator $\Omega_{i,j}\in\text{End}(X\otimes V^{\otimes \ell})$ by
\beq
\Omega_{i,j}:=\sum_{E\in B}\tau_{i,j}(E,E^\ast).
\eeq
Consider the map $\Theta$ from $\HH_\ell$ to End$(X\otimes V^{\otimes\ell})$ defined by
\beqn
\Theta(s_i)&=&-\Omega_{i,i+1}\quad\text{ for $1\le i<\ell$},\\
\Theta(\e_k)&=&\frac{n-1}{2}+\sum_{0\le j<k}\Omega_{j,k}\quad\text{ for $1\le k\le \ell$},
\eeqn
where $\e_k$ is as in Section 3.1 and $s_i$ is the simple reflection defined by $s_i(\e_i)=\e_{i+1}$. 
\begin{lem}{\cite[Lemma 2.2.1]{AS}}
As operators on $X\otimes V^{\otimes \ell}$, we have the following equalities
\beqn
[\Theta(\e_i),\Theta(\e_j)]&=&0\\
\Theta(s_i)\Theta(\e_j)-\Theta(s_i(\e_j))\Theta(s_i)&=&\alpha_i(\e_j^\vee)\quad\text{ for $1\le i<\ell$ and $1\le j\le\ell$}.
\eeqn
\end{lem}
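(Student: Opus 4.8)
\emph{Proof strategy.} The plan is to establish both identities by a direct computation, expanding the operators in terms of the elementary pieces $\Omega_{a,b}$, where from now on the index $0$ labels the tensor factor $X$. Everything reduces to three properties of these operators, which I would record first. \textbf{(P1)} If $\{a,b\}\cap\{c,d\}=\emptyset$, then $[\Omega_{a,b},\Omega_{c,d}]=0$, since the two operators act on disjoint tensor slots. \textbf{(P2)} The tensor $\sum_{E\in B}E\otimes E^\ast\in\g\otimes\g$ is $\g$-invariant, so $[\tau_a(x)+\tau_b(x),\Omega_{a,b}]=0$ for every $x\in\g$; multiplying by $\tau_c(F^\ast)$ and summing over $F$ yields the classical Yang--Baxter relation $[\Omega_{a,b},\,\Omega_{a,c}+\Omega_{b,c}]=0$ for pairwise distinct $a,b,c$ (together with its cyclic variants). \textbf{(P3)} On the two adjacent copies of $V$ labelled $i$ and $i+1$, the operator $\Omega_{i,i+1}$ is the transposition of those two factors; in particular $\Omega_{i,i+1}^{\,2}$ is a scalar, and conjugation by $\Omega_{i,i+1}$ interchanges the labels $i$ and $i+1$, so that $\Omega_{i,i+1}\,\Omega_{a,i}=\Omega_{a,i+1}\,\Omega_{i,i+1}$ whenever $a\notin\{i,i+1\}$.

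For the commutativity $[\Theta(\e_i),\Theta(\e_j)]=0$ I may assume $i<j$. After expansion the constant terms $\tfrac{n-1}{2}$ drop out, and the commutator equals $\sum_{0\le a<i}\sum_{0\le b<j}[\Omega_{a,i},\Omega_{b,j}]$. By (P1) the only surviving terms are those in which $\{a,i\}$ meets $\{b,j\}$, i.e.\ $b=i$ or $a=b$; grouping them, the commutator becomes $\sum_{0\le a<i}[\Omega_{a,i},\,\Omega_{a,j}+\Omega_{i,j}]$, and each summand vanishes by (P2) applied to the distinct triple $a<i<j$.

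For the second identity I would split on the position of $j$. If $j\notin\{i,i+1\}$, then $s_i(\e_j)=\e_j$ and $\alpha_i(\e_j^\vee)=0$, and the claim reduces to $[\Omega_{i,i+1},\,\sum_{0\le a<j}\Omega_{a,j}]=0$: the terms with $a\notin\{i,i+1\}$ vanish by (P1), while $[\Omega_{i,i+1},\,\Omega_{i,j}+\Omega_{i+1,j}]=0$ by (P2). The two remaining cases $j=i$ and $j=i+1$, which must return the scalars $\alpha_i(\e_i^\vee)=1$ and $\alpha_i(\e_{i+1}^\vee)=-1$, carry the actual content. For $j=i$ one writes out $\Theta(s_i)\Theta(\e_i)-\Theta(\e_{i+1})\Theta(s_i)$: the $\tfrac{n-1}{2}$-terms cancel each other, the sums over $0\le a<i$ cancel in pairs once (P3) is used to rewrite each $\Omega_{i,i+1}\Omega_{a,i}$ as $\Omega_{a,i+1}\Omega_{i,i+1}$, and what survives is the scalar $\Omega_{i,i+1}^{\,2}=1$. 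The case $j=i+1$ is the mirror image, with $i$ and $i+1$ exchanged, leaving $-\Omega_{i,i+1}^{\,2}=-1$.

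The cancellations driven by (P1) and (P2) are routine; the step I expect to be the main obstacle is the bookkeeping in the cases $j\in\{i,i+1\}$, where one must verify that every mixed term $\Omega_{i,i+1}\Omega_{a,i}$ is matched against a term $\Omega_{a,i+1}\Omega_{i,i+1}$ with nothing left over, so that the residual scalar is exactly $\pm1$. This is precisely where the transposition description of $\Omega_{i,i+1}$ in (P3) is indispensable.
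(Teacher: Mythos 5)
Your strategy is the standard one (and is essentially how the cited source [AS] argues): reduce everything to the locality property (P1), the invariance of the Casimir tensor (P2), and the explicit form of $\Omega_{i,i+1}$ on the two $V$-factors (P3). The paper itself gives no proof, only the citation, so you are on your own here, and your treatment of $[\Theta(\e_i),\Theta(\e_j)]=0$ and of the cross relation for $j\notin\{i,i+1\}$ is complete and correct: those cases use only (P1) and (P2), which hold for any $\g$-invariant tensor $\sum_{E}E\otimes E^{\ast}$.

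The gap is in (P3), precisely at the point you flag as carrying the actual content. As defined in this paper, $B$ is a basis of $\g=\mathfrak{sl}_n(\mathbb{C})$, so $\sum_E E\otimes E^{\ast}$ is the \emph{traceless} Casimir tensor; its action on $V\otimes V$ is $P-\tfrac1n$ (with $P$ the flip), not $P$, since the $\mathfrak{gl}_n$ Casimir $\sum_{i,j}E_{ij}\otimes E_{ji}$ acts as the flip and passing to $\mathfrak{sl}_n$ subtracts $\tfrac1n I\otimes I$. Consequently $\Omega_{i,i+1}^{2}=1-\tfrac2n\Omega_{i,i+1}-\tfrac1{n^{2}}$ is not a scalar, and the interchange identity acquires corrections: $\Omega_{i,i+1}\Omega_{a,i}-\Omega_{a,i+1}\Omega_{i,i+1}=\tfrac1n\left(\Omega_{a,i+1}-\Omega_{a,i}\right)$. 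Running your $j=i$ computation with these corrected identities leaves $1-\tfrac1{n^{2}}-\tfrac2n\Omega_{i,i+1}+\tfrac1n\sum_{0\le a<i}\left(\Omega_{a,i}-\Omega_{a,i+1}\right)$ rather than the scalar $1$, and these extra terms do not cancel (already for $X$ trivial, $n=\ell=2$, one gets $\Omega_{1,2}^{2}=\tfrac54-P\neq 1$). So your argument, applied literally to the operators as defined here, would prove a false identity. The repair is to note that the lemma must be read with $\Omega_{i,j}$ built from the $\mathfrak{gl}_n$ Casimir, i.e.\ with $\Omega_{i,i+1}$ genuinely equal to the transposition on the $V$-slots (the convention of [AS]); the two normalizations differ only by scalars on a fixed weight or central-character eigenspace, but that is not enough for an operator identity on all of $X\otimes V^{\otimes\ell}$. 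Once (P3) is justified for that normalization, the remaining bookkeeping in your cases $j\in\{i,i+1\}$, including the values $+1$ and $-1$, is correct.
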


\begin{prop}{\label{commutes}}{\cite[Lemma 2.2.3]{CTU}}
For any $U(\g)$-module $X$, $X\otimes V^{\otimes \ell}$ is an $\HH_\ell$-module. Moreover, the action of $\HH_\ell$ commutes with the action of $\g$. 
\end{prop}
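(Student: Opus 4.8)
Here is how I would attack this. The content of the statement is that the generator-wise recipe $\Theta$ extends to an algebra homomorphism $\HH_\ell\to\mathrm{End}(X\otimes V^{\otimes\ell})$ whose image centralizes the diagonal $U(\g)$-action on $X\otimes V^{\otimes\ell}$. The plan has two parts: (i) verify the defining relations of $\HH_\ell$ on the operators $\Theta(s_i)$ and $\Theta(\e_k)$, organizing the check along the vector space decomposition $\HH_\ell\cong\mathbb{C}[W]\otimes S(V)$; and (ii) deduce $\g$-equivariance from the $\mathrm{ad}$-invariance of the split Casimir tensor $\sum_{E\in B}E\otimes E^\ast$.

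For (i), I would invoke the Remark following the definition of $\HH$ (the vector space isomorphism $\mathbb{C}[W]\otimes S(V)\xrightarrow{\sim}\HH_\ell$): an algebra homomorphism out of $\HH_\ell$ is exactly the data of an algebra map $\mathbb{C}[W]\to\mathrm{End}(X\otimes V^{\otimes\ell})$ with $s_i\mapsto\Theta(s_i)$, an algebra map $S(V)\to\mathrm{End}(X\otimes V^{\otimes\ell})$ with $\e_k\mapsto\Theta(\e_k)$, and the cross relations $\Theta(s_i)\Theta(h)-\Theta(s_i(h))\Theta(s_i)=\langle\alpha_i,h\rangle$ for $h\in V$. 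The second and third of these are precisely the preceding lemma (\cite[Lemma~2.2.1]{AS}): $[\Theta(\e_i),\Theta(\e_j)]=0$ makes $\e_k\mapsto\Theta(\e_k)$ an algebra map, and $\Theta(s_i)\Theta(\e_j)-\Theta(s_i(\e_j))\Theta(s_i)=\alpha_i(\e_j^\vee)$ is the cross relation (the additive constant $\tfrac{n-1}{2}$ in $\Theta(\e_k)$ is central, hence harmless). What then remains are the Coxeter relations for the $\Theta(s_i)=-\Omega_{i,i+1}$: $\Theta(s_i)^2=\mathrm{id}$, commutation for $|i-j|>1$, and the braid relation. Since $\Omega_{i,i+1}$ acts only on the $i$-th and $(i{+}1)$-st tensor slots, as $\mathrm{id}$ tensored with $\sum_{E\in B}E\otimes E^\ast\in\mathrm{End}(V\otimes V)$, and since $V$ is the standard $\g$-module, this operator on $V\otimes V$ is explicit (it is, up to an additive scalar, the flip); the three Coxeter relations then follow either by a direct computation on $V\otimes V$ and $V^{\otimes 3}$, or from the classical identities $[\Omega_{ab},\Omega_{cd}]=0$ for pairwise distinct indices and $[\Omega_{ab},\Omega_{ac}+\Omega_{bc}]=0$ together with the value of $\Omega_{i,i+1}^2$. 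Reassembling the three pieces via the decomposition of $\HH_\ell$, the homomorphism $\Theta$ exists, so $X\otimes V^{\otimes\ell}$ is an $\HH_\ell$-module.

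For (ii), note that every element of $\Theta(\HH_\ell)$ is a noncommutative polynomial with scalar coefficients in the $\Omega_{i,j}$, so it suffices to show that each $\Omega_{i,j}$ commutes with the action of every $x\in\g$ acting diagonally on the $\ell+1$ tensor factors. The commutator is supported in slots $i$ and $j$, where it equals $\sum_{E\in B}\big([x,E]\otimes E^\ast+E\otimes[x,E^\ast]\big)$, and this vanishes because $B^\ast$ is the basis dual to $B$ with respect to the (ad-invariant) trace form. Hence the image of $\HH_\ell$ commutes with $U(\g)$. The genuinely substantive identities among the $\Omega_{i,j}$ — the ones underlying the commutativity of the $\Theta(\e_k)$ and the cross relations — are already carried by \cite[Lemma~2.2.1]{AS}, so no new hard computation arises here; the one point that calls for real care is fixing the normalizations entering $\Theta$ (in particular the coefficient of the flip in $\sum_{E\in B}E\otimes E^\ast$, which differs between $\mathfrak{sl}_n$ and $\mathfrak{gl}_n$, and the constant $\tfrac{n-1}{2}$) so that the Coxeter relations and the cross relations hold simultaneously. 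With those pinned down, each verification above is routine.
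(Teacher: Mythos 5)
The paper offers no proof of this proposition; it is quoted verbatim from \cite[Lemma 2.2.3]{CTU}, and your outline is precisely the argument given there and in \cite{AS}: use the presentation of $\HH_\ell$ (equivalently the decomposition $\mathbb{C}[W]\otimes S(V)\cong\HH_\ell$) to reduce to the three families of relations, let \cite[Lemma 2.2.1]{AS} (quoted just before the proposition) supply the polynomial and cross relations, check the Coxeter relations on $V^{\otimes 2}$ and $V^{\otimes 3}$, and get $\g$-equivariance from the $\mathrm{ad}$-invariance of $\sum_{E\in B}E\otimes E^{\ast}$. Structurally your proof is the right one, and part (ii) is complete as written.

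The normalization point you defer at the end is, however, not merely a matter of care: it is the one place where the verification genuinely hinges. With the conventions as literally printed in Section \ref{hact} ($B$ an orthonormal basis of $\g=\mathfrak{sl}_n$ for the trace form), the restriction of $\Omega_{i,i+1}$ to the $i$-th and $(i{+}1)$-st slots is $P-\tfrac{1}{n}$, where $P$ is the flip on $V\otimes V$, so that $\Theta(s_i)^2=\bigl(P-\tfrac{1}{n}\bigr)^2=1-\tfrac{2}{n}P+\tfrac{1}{n^2}\neq 1$: the quadratic Coxeter relation fails, and the direct computation you propose would refute rather than verify it. The relation does hold for the $\mathfrak{gl}_n$ split Casimir $\sum_{a,b}E_{ab}\otimes E_{ba}$, which acts on $V\otimes V$ as $P$ exactly, and that is the tensor actually used in \cite{AS} and \cite{CTU}; the braid and commutation relations, as well as the relations treated by \cite[Lemma 2.2.1]{AS}, are insensitive to adding central scalars to the $\Omega_{i,j}$ and to the constant $\tfrac{n-1}{2}$. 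So your proof goes through once you fix the convention to the $\mathfrak{gl}_n$ one (equivalently, add $\tfrac{1}{n}$ back into $\Omega_{i,i+1}$ in the definition of $\Theta(s_i)$); as a proof of the statement with the paper's literal normalization, the Coxeter step cannot be completed.
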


\end{subsection}

\end{section}

\begin{section}{Main Results}\label{sec-main}
In this section we will illuminate the relationship between irreducible Whittaker modules and irreducible $\HH$-modules for type $A_n$. Theorem \ref{aff_to_proj} shows that there is a natural correspondence between our parametrizations of irreducible objects in each category. This correspondence allows us to compare the respective character formulas, given by the Kazhdan-Lusztig polynomials. Theorem \ref{mult_equal}, which is the geometric core of our main result, shows that the multiplicity formulas agree under the correspondence in Theorem \ref{aff_to_proj}. In Section \ref{ASO}, we review the construction of Arakawa-Suzuki functors for the category of highest weight modules~\cite{AS}. We view these results as a categorical realization of the geometric results of Section \ref{geo_sec}. We conclude with our main result, the construction of Arakawa-Suzuki functors for Whittaker modules. 
\begin{subsection}{Equivariant maps between geometric parameters}\label{geo_sec}

Let $V=\mathbb{C}^n$, $G=\text{SL}(V)$, and $\g=\text{Lie}(G)$. In this section we will fix $\lambda\in\h^\ast$, and let $\sigma\in\h$ correspond to $\lambda+\rho$ by the trace form. Let $\g_i$ denote the $i$-eigenspace of ad$(\sigma)$. Let
\beq
\lev=\g_0,\qquad \uu=\bigoplus_{i>0}\g_i,\qquad \p=\lev\oplus\uu.
\eeq
There is a decomposition $V=V_1\oplus\cdots\oplus V_k$ such that 
\beqn
\lev&=&\{x\in\g|x(V_i)\subset V_i\},\\
\uu&=&\{x\in\g|x(V_i)\subset\bigoplus_{j<i}V_j\}.
\eeqn
Finally, let $L\subset G$ and $P\subset G$ be such that Lie$(L)=\lev$ and Lie$(P)=\p$. Consider the conjugation action of $L$ on $\g_{1}$. An orbit of $L$ on $\g_{1}$ is called a \emph{graded nilpotent class}~\cite{Z}. 

\begin{thm}{\cite[Theorem 1]{Z}}{\label{aff_to_proj}}
Let $F(V)$ be the variety of conjugates of $\p$. Let $N^t$ denote the transpose of $N$, and consider the following map  
\beqn
\varphi:\g_{1}&\rightarrow& F(V)\\
N&\mapsto&(1+N^t)\bullet \p:=(1+N^t)\p(1+N^t)^{-1}
\eeqn
We have that \begin{enumerate}[a)]
\item $\varphi$ is equivariant for the action of $L$.
\item $\varphi(L\bullet N)=L(1+N^t)\bullet\p$ is dense in $P(1+N^t)\bullet \p$.
\end{enumerate}
\end{thm}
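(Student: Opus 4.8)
The plan is to analyze the map $\varphi$ directly by tracking how conjugation by $1+N^t$ interacts with the $L$-action and with closures of orbits. For part a), the key observation is that $N \mapsto 1+N^t$ is $L$-equivariant when $\g_1$ carries the conjugation action and the target carries the conjugation action twisted by transpose; more precisely, for $g \in L$ we have $(g N g^{-1})^t = (g^{-1})^t N^t g^t$, and since $L$ preserves the trace form, transpose-inverse of an element of $L$ is again in $L$. So I would first check that $g \bullet \varphi(N) = \varphi(g \bullet N)$ by writing out $g(1+N^t)\p(1+N^t)^{-1}g^{-1}$ and inserting $g^{-1}g$ appropriately, reducing to the identity $g(1+N^t)g^{-1} = 1 + (gNg^{-1})^{\text{(something)}}$. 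The mild subtlety is that the naive computation gives $gN^tg^{-1}$, not $(gNg^{-1})^t$, so I need to be careful about whether the statement intends the standard $L$-action on the source or a twisted one; I expect the resolution is that $L$ (being $\text{Lie}(L)$'s group, stabilizer of the grading, hence of $\sigma$) consists of matrices preserving each $V_i$, and combined with preservation of the trace form one gets the needed compatibility. I would state the equivariance with the action conventions spelled out explicitly so the computation closes cleanly.

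For part b), I would argue in two steps. First, $\varphi(L \bullet N) = L(1+N^t)\bullet \p$ is clear by definition and part a). Second, I need $L(1+N^t)\bullet\p$ to be dense in $P(1+N^t)\bullet\p$. Here the central point is a factorization: I claim that for $N \in \g_1$, the element $1+N^t$ together with $P$ generates a set whose orbit on $\p$ is controlled by $L$ up to something contained in the stabilizer. Concretely, $P = LU$ where $U = \exp(\uu)$ (with $\uu = \bigoplus_{i>0}\g_i$), and since $N \in \g_1$ we have $N^t \in \g_{-1}$, so $1+N^t \in \exp(\bar\uu_{-1}) \subset \exp(\bar\uu)$. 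The orbit $P(1+N^t)\bullet\p$ is the image in $F(V)$ of the coset $P(1+N^t)$, i.e. of $P \exp(N^t)$. I would show that $L\exp(N^t)$ is dense in $P\exp(N^t)$ modulo the stabilizer $\text{Stab}_P(\p) = P$ — that is, that the natural map $P/P \cap ((1+N^t)P(1+N^t)^{-1}) \to F(V)$ restricted to the $L$-orbit hits a dense subset. The cleanest route: compute the stabilizer of $\varphi(N)$ in $P$ and its stabilizer in $L$, show the $L$-orbit and $P$-orbit have the same dimension by a tangent-space computation at $\varphi(N)$ (comparing $[\lev, N^t] \bmod \p$-type terms with $[\p, N^t]\bmod (\ldots)$), and invoke irreducibility of orbit closures plus the fact that a constructible subset of an irreducible variety containing a dense orbit of maximal dimension is dense.

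The main obstacle I anticipate is the dimension/density comparison in part b): one must show that enlarging the acting group from $L$ to $P$ does not enlarge the orbit through $\varphi(N)$, despite $P$ being strictly larger than $L$. The reason this works is that the extra directions in $\p$ beyond $\lev$ lie in $\uu$, and $\uu$ already stabilizes $\p$ (indeed $U \subset \text{Stab}(\p)$), so the "new" group elements act trivially on the base point $\p$ before conjugation by $1+N^t$; after conjugation, the stabilizer becomes $(1+N^t)P(1+N^t)^{-1} \cap P$, and the point is that this intersection still has dimension $\dim P - \dim L$ more than $\text{Stab}_L$. I would make this precise by the tangent space calculation: $T_{\varphi(N)}(P\bullet\varphi(N)) \cong \p/(\p \cap \text{Ad}(1+N^t)\p)$, and $T_{\varphi(N)}(L\bullet\varphi(N)) \cong \lev/(\lev \cap \text{Ad}(1+N^t)\p)$, and one checks these have equal dimension because the graded pieces match up — the $\uu$ part of $\p$ maps into $\text{Ad}(1+N^t)\p$ since $\text{ad}(N^t)$ shifts degree down by one and $\uu$ is built from strictly positive degrees. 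Once dimensions agree, density of the $L$-orbit in the $P$-orbit follows from general facts about algebraic group actions (the orbit of maximal dimension inside an irreducible invariant subset is open dense), which also gives the "dense in $P(1+N^t)\bullet\p$" phrasing directly.
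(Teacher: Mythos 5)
The paper does not prove this statement; it is quoted verbatim from Zelevinsky \cite[Theorem 1]{Z}, so your attempt can only be measured against Zelevinsky's (very terse) original argument. Your treatment of part a) is correct: the transpose really does force the twist $g\mapsto (g^t)^{-1}$, this is an automorphism of $L$ because $L$ is block-diagonal, and that is enough to conclude $\varphi(L\bullet N)=L(1+N^t)\bullet\p$. Your strategy for part b) --- show $\dim L\bullet\varphi(N)=\dim P\bullet\varphi(N)$ via the tangent-space identifications $T(P\bullet\varphi(N))\cong\p/(\p\cap\mathrm{Ad}(1+N^t)\p)$ and $T(L\bullet\varphi(N))\cong\lev/(\lev\cap\mathrm{Ad}(1+N^t)\p)$, then use irreducibility of the $P$-orbit --- is also the right skeleton.

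The gap is in the one step that carries all the content: your justification of the dimension equality. You claim ``the $\uu$ part of $\p$ maps into $\mathrm{Ad}(1+N^t)\p$ since $\mathrm{ad}(N^t)$ shifts degree down by one.'' This is false, under either reading. Take $\g=\mathfrak{sl}_3$ with the principal grading, $\p=\bb$, $\lev=\h$, $N=e_{12}+e_{23}$, $g=1+N^t$. Then $\mathrm{Ad}(g)e_{12}=e_{12}+e_{22}-e_{11}-e_{21}\notin\bb$, and $\mathrm{Ad}(g^{-1})e_{12}=e_{11}+e_{12}-e_{21}-e_{22}+e_{31}+e_{32}\notin\bb$, so neither $\mathrm{Ad}(g)\uu\subset\p$ nor $\uu\subset\mathrm{Ad}(g)\p$ holds: conjugation by $1+N^t$ is only \emph{filtered} with respect to $\bigoplus_{j\le i}\g_j$, and the lower-order terms land in $\bar\uu$, not in $\p$. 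What you actually need is the weaker statement $\uu\subset\lev+\mathrm{Ad}(1+N^t)\p$, i.e.\ that each $u\in\uu$ can be corrected by an element of $\lev$ so that $u-l\in\mathrm{Ad}(1+N^t)\p$; equivalently, that the linear map $\p\to\bar\uu$, $x\mapsto\pi_{\bar\uu}(\mathrm{Ad}(1+N^t)^{-1}x)$, has the same image as its restriction to $\lev$. This is true (in the example above one solves a small consistent linear system for $l$), but it is not a formal consequence of degree bookkeeping --- it is exactly the assertion that passing from $L$ to $P$ does not enlarge the orbit, i.e.\ it is essentially the theorem. It fails for conjugates of $\p$ by general group elements (the open $P$-orbit on $G/P$ has dimension $\dim\uu$, far exceeding any $L$-orbit when $L$ is a torus), so the argument must use that $N^t$ lies in the single graded piece $\g_{-1}$. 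To close the gap you must either solve the correction problem directly (downward induction on the grading, tracking the $\bar\uu$-components and showing the resulting system in $l\in\lev$ is always consistent), or do what Zelevinsky does: compute $\dim L\bullet N=\dim\lev-\dim\mathrm{End}(M_N)$ from the quiver/multisegment parametrization and $\dim P\bullet\varphi(N)$ from the combinatorial parametrization of $W_L\backslash W/W_L$ by integer matrices, and check the two formulas agree.
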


Since $\varphi$ is $L$-equivariant, we can define an induced map on $L$ orbits of $\g_{1}$ as follows
\beqn
\Phi:L\backslash \g_{1}&\rightarrow & P \backslash G / P\\
Q&\mapsto & P\cdot\varphi(Q)
\eeqn 
Now we will relate the above map to the classifications of irreducible objects in Section 2 and Section 3. Recall the notation $\N(\xi(\lambda),\eta)$ denoting a subcategory of Whittaker modules from Section 2, and $\mathcal{P}_\HH(\lambda+\rho)$ denoting the geometric parametrization of irreducible modules for the graded affine Hecke algebra from Section 3. The double cosets $W_\eta\backslash W/W_\lambda$ parametrize the set of irreducible Whittaker modules in $\N(\xi(\lambda),\eta)$ (see Theorem \ref{irrN_class}). In the present setting, these double cosets are in one-to-one correspondence with $P_\eta$ orbits on $G/P$ (where $P_\eta$ is a Lie subgroup of $G$ with Lie algebra $\p_\eta$). Therefore, we denote the set of $P_\eta$ orbits on $G/P$ by $\mathcal{P}_\N(\eta,\lambda)$, and use this set to parametrize the irreducible objects in $\N(\xi(\lambda),\eta)$. Similarly, the geometric parameters $\mathcal{P}_\HH(\lambda+\rho)$ can be identified with the set of $L$ orbits on $\g_1$ (see the discussion following Theorem \ref{geo_mult_hecke}). If we assume that $W_\lambda=W_\eta$, then $P_\eta=P$, and we get the induced map on geometric parameters:
\beqn
\Psi:\mathcal{P}_\N(\eta,\lambda)&\rightarrow &\mathcal{P}_\HH(\lambda+\rho)\cup \{0\}\\
\mathcal{Q}&\mapsto & 
\begin{cases}
\Phi^{-1}(\mathcal{Q})\text{ if $\mathcal{Q}\in \text{im}\Phi$}\\
0\text{ otherwise}
\end{cases}
\eeqn
The above map of geometric parameters allows us to compare multiplicity formulas in each category using Corollary \ref{whit_geo_mult} and Theorem \ref{geo_mult_hecke}.

\begin{thm}{\label{mult_equal}}
Assume $\lambda$ is integral, dominant, and $W_\eta=W_\lambda$. Then for $\mathcal{Q},\mathcal{O}\in \mathcal{P}_\mathcal{N}(\eta,\lambda)$, we have the equality
\beq
[\stdN(\mathcal{Q}):\irrN(\mathcal{O})]=[\stdH(\Psi(\mathcal{Q})),\irrH(\Psi(\mathcal{O}))]
\eeq
if $\Psi(\mathcal{Q})$, $\Psi(\mathcal{O})\neq 0$. 
\end{thm}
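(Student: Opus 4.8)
The plan is to reduce the equality of multiplicities to a single geometric identity: the statement that the map $\varphi$ of Theorem~\ref{aff_to_proj} intertwines the stratifications of $\g_1$ (by $L$-orbits) and of $G/P$ (by $P$-orbits, equivalently $P\backslash G/P$) in a way compatible with closure relations, and that it pulls back intersection cohomology complexes to intersection cohomology complexes up to shift. Concretely, by Corollary~\ref{whit_geo_mult} the left-hand side is $\sum_i \dim H^i(\mathrm{IC}_x(C({}_\eta y_\lambda)))$ for $x$ in the Bruhat cell attached to $\mathcal{Q}$, while by Theorem~\ref{geo_mult_hecke} (and the discussion identifying $\mathcal{P}_\HH(\lambda+\rho)$ with $L\backslash\g_1$, with only the trivial local system occurring) the right-hand side is $\sum_i \dim H^i(\mathrm{IC}_y(X_{\Psi(\mathcal{O})}, \mathbb{C}))$ for $y\in X_{\Psi(\mathcal{Q})}$. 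So the whole theorem amounts to comparing IC stalks on two spaces related by $\varphi$.

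First I would record the precise dictionary. Under the hypothesis $W_\eta = W_\lambda$ we have $P_\eta = P$, so $\mathcal{P}_\N(\eta,\lambda)$ is the set of $P$-orbits on $G/P$, i.e. $P\backslash G/P$; the Bruhat cells $C({}_\eta y_\lambda)$ refining the flag-variety picture of Corollary~\ref{whit_geo_mult} are exactly the $B$-orbit pieces of these double cosets, and the relevant IC complex is the one for the trivial local system on the double coset $P w P / B$, whose stalk is constant along $B$-orbits inside a fixed $P$-orbit. On the Hecke side $\mathcal{P}_\HH(\lambda+\rho) \leftrightarrow L\backslash\g_1$, and $\Psi = \Phi^{-1}$ where $\Phi(Q) = P\cdot\varphi(Q)\in P\backslash G/P$; the content of Theorem~\ref{aff_to_proj}(b) is that $\varphi(L\bullet N)$ is dense in $P(1+N^t)\bullet\p$, so $\Phi$ is a well-defined bijection from those $L$-orbits in its image onto a subset of $P\backslash G/P$, and $\varphi$ restricted to (a neighbourhood of) a point of $Q$ is a smooth map onto its image with fibers of constant dimension.

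Next I would prove the two geometric facts that make the stalks match. Step one: $\Phi$ is order-preserving for closures, i.e. $\overline{L\bullet N} \supseteq L\bullet N'$ iff $\overline{P\cdot\varphi(N)}\supseteq P\cdot\varphi(N')$, so that ``$\mathcal{Q}\in\overline{\mathcal{O}}$'' translates correctly between the two pictures (this uses that $\varphi$ is a morphism, $L$-equivariance, and the density statement in~(b) to transfer closures along the dominant map $L\times\p$-orbit $\to$ $P$-orbit). Step two: for $y\in X_{\Psi(\mathcal{Q})}$ and its image $\varphi(y)$ lying in the $P$-orbit corresponding to $\mathcal{Q}$, there is an isomorphism of stalks
\beq
H^\bullet\bigl(\mathrm{IC}_{y}(X_{\Psi(\mathcal{O})},\mathbb{C})\bigr)\;\cong\;H^\bullet\bigl(\mathrm{IC}_{\varphi(y)}(\overline{P\cdot\varphi(Q_{\mathcal{O}})},\mathbb{C})\bigr)
\eeq
up to an overall degree shift by the relative dimension of $\varphi$, which is constant on each stratum; since a uniform shift does not change the total dimension $\sum_i \dim H^i$, the two sums agree. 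The cleanest way to get this is to note that $\varphi$ is, locally near $Q$, a composition of a smooth surjection (an affine-bundle-type map, by the ``$1+N^t$'' parametrization) followed by the open inclusion $L\bullet N \hookrightarrow P\cdot\varphi(N)$ of Theorem~\ref{aff_to_proj}(b); smooth pullback commutes with IC up to shift, and an open dense inclusion does not change stalks over points in the image, so IC on $\overline{X_{\Psi(\mathcal{O})}}$ pulls back to IC on the closure of the corresponding $P$-orbit in $G/P$. Finally I would invoke that on $G/P$ the $P$-orbit closures and their IC sheaves can be computed either directly or via the $B$-orbit refinement used in Corollary~\ref{whit_geo_mult}, so that $\sum_i\dim H^i(\mathrm{IC}_{\varphi(y)}) = \sum_i\dim H^i(\mathrm{IC}_x(C({}_\eta y_\lambda)))$ for $x$ in the relevant Bruhat cell, completing the chain of equalities.

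The main obstacle I expect is Step two: controlling the behaviour of the intersection cohomology sheaf under $\varphi$ precisely enough to conclude that the \emph{total} stalk dimension is preserved. Theorem~\ref{aff_to_proj} only asserts that $\varphi$ is an $L$-equivariant morphism with dense image into each $P$-orbit, not that it is smooth or that its fibers are equidimensional in a way compatible with the whole stratification; one must either extract this from the explicit ``$1+N^t$'' formula (showing $\varphi$ is, stratum by stratum, a locally trivial fibration with affine-space fibers) or appeal to the known structure of the $L\backslash\g_1 \leftrightarrow P\backslash G/P$ correspondence in type $A$ (the $\mathfrak{gl}$-analogue of Zelevinsky's picture, cf.~\cite{Z81}, \cite{CT}). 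A secondary subtlety is bookkeeping the trivial-vs-nontrivial local system issue on the $SL_n$ side: one must use the fact recorded after Theorem~\ref{geo_mult_hecke} that only the trivial local system appears in $\mu^\sigma_\ast\mathcal{C}$, so that both multiplicities are honest IC-stalk sums with constant coefficients and no isotypic decomposition is lost under $\varphi$.
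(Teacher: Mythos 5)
Your proposal follows essentially the same route as the paper's proof: both sides are rewritten as sums of IC-stalk dimensions via Corollary~\ref{whit_geo_mult} and Theorem~\ref{geo_mult_hecke}, and the equality is then reduced to the fact that $\varphi$ is stratum-preserving with dense image in each stratum, so that the stalks agree. The geometric step you flag as the main obstacle (smoothness/equidimensionality of $\varphi$ and compatibility of IC under pullback) is exactly what the paper outsources to Zelevinsky~\cite{Z} and~\cite[Section 4.8]{Kirwan} rather than proving; your more explicit discussion of why the stalks match is a reasonable expansion of that citation.
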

\begin{proof}
Corollary \ref{whit_geo_mult} implies that
\beq
 [\stdN(\mathcal{Q}):\irrN(\mathcal{O})]=\sum_i\text{dim}H^i(\text{IC}_{x}(C({}_\eta y_\lambda))),
 \eeq
 where $C(w)$ is the open $B$ orbit in $\mathcal{Q}$, $x\in C(w)$, and $C({}_\eta y_\lambda)$ is the open $B$ orbit in $\mathcal{O}$.  Theorem \ref{geo_mult_hecke} implies that
 \beq
 [\stdH(\Psi(\mathcal{Q})):\irrH(\Psi(\mathcal{O}))]=\sum_i\text{dim}H^i(\text{IC}_{y}(\Psi(\mathcal{O})))
 \eeq
for $y\in\Psi(\mathcal{Q})$. Since the map $\varphi$ is stratum preserving, continuous, and has dense image in each stratum $S$ of $G/P$, we have that   $\text{dim}H^i(\text{IC}_{x}(C({}_\eta y_\lambda)))=\text{dim}H^i(\text{IC}_{y}(\Psi(\mathcal{O})))$ for each $i$ (\cite{Z},\cite[Section 4.8]{Kirwan}).
\end{proof}
Therefore, if there is an exact functor $F:\mathcal{N}(\xi(\lambda),\eta)\rightarrow \mathcal{H}_{\lambda+\rho}$ such that
 \beq
F(\stdN(\mathcal{O}))=\begin{cases}
\stdH(\Psi(\mathcal{O}))\text{ if }\Psi(\mathcal{O})\neq 0\\
0\text{ otherwise}
\end{cases}
\eeq
then we can conclude that
\beq
F(\irrN(\mathcal{O}))=
\begin{cases}
\irrH(\Psi(\mathcal{O}))
\text{ if $\Psi(\mathcal{O})\neq 0$}\\
0\text{ otherwise}
\end{cases}
\eeq
The content of the remainder of the paper will be focused on constructing such a functor for each choice of $\xi(\lambda)$ and $\eta$. 

\end{subsection}

\begin{subsection}{Arakawa-Suzuki functors for highest weight modules}\label{ASO}
In order to proceed constructing exact functors from the category of Whittaker modules to the category of graded affine Hecke algebra modules, we need to review some of the results of Arakawa-Suzuki. Specifically, following \cite{AS}, we can define a functor from $\mathcal{O}'$ to the category of vector spaces as follows:
\beq
F_{\ell,\lambda}(X):=\left(X\otimes V^{\otimes \ell}\right)^{[\lambda]}_\lambda.
\eeq
In this section we will briefly review results of \cite{AS} which show that $F_{\ell,\lambda}$ is a functor from $\mathcal{O}'$ to $\mathcal{H}_\ell$ (the category of finite dimensional $\HH_\ell$-modules) which maps standard modules in $\mathcal{O}'$ to standard modules in $\mathcal{H}_\ell$ (or zero). 

\begin{subsubsection}{Images of standard modules}

We will now review the following results of \cite{AS}. For a $U(\g)$-module $X$ where $\h$ acts semisimply, let $P(X):=\{\lambda\in\h^\ast: X_\lambda\neq 0\}$ be the set of all nontrivial weights of $X$. 
\begin{thm}{\label{AS}}{\cite[Theorem 3.3.1]{AS}}
For $\lambda,\mu$ integral weights with $\lambda$ dominant, there is an isomorphism of $\HH_\ell$-modules 
\beq
\left(M(\mu)\otimes V^{\otimes \ell}\right)^{[\lambda]}_\lambda\cong \begin{cases}
 \stdH(\lambda,\mu)\quad\text{ if $\lambda-\mu\in P(V^{\otimes \ell})$}\\
 0\qquad \qquad\text{ otherwise}
\end{cases}
\eeq 

\end{thm}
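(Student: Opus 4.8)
The plan is to relate the tensor-product module $M(\mu)\otimes V^{\otimes\ell}$ to the parabolically induced description of $\stdH(\lambda,\mu)$ by analyzing weight spaces block by block. First I would record that $\h$ acts semisimply on $M(\mu)\otimes V^{\otimes\ell}$, so the generalized $Z(\g)$-eigenspace $(M(\mu)\otimes V^{\otimes\ell})^{[\lambda]}$ is spanned by actual $\h$-weight vectors whose weights lie in $(W_\lambda\bullet\lambda)$ shifted by weights of $V^{\otimes\ell}$; intersecting with the $\lambda$-weight space $(\cdot)_\lambda$ then isolates exactly the weight-$\lambda$ part. By Corollary \ref{dwt}, $M(\mu)\otimes V^{\otimes\ell}$ has a standard (Verma) filtration with subquotients $M(\mu+\nu)$ for $\nu\in P(V^{\otimes\ell})$ counted with multiplicity, and taking $(\cdot)^{[\lambda]}_\lambda$ is exact (Proposition \ref{ex_inf} plus the fact that passing to a fixed $\h$-weight space is exact on the category where $\h$ acts semisimply). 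Since $M(\mu+\nu)^{[\lambda]}_\lambda$ is one-dimensional when $W_\lambda\bullet(\mu+\nu)=W_\lambda\bullet\lambda$ with $\mu+\nu\preceq\lambda$ — which for $\lambda$ dominant forces $\mu+\nu=\lambda$, i.e. $\lambda-\mu=\nu\in P(V^{\otimes\ell})$ — and is zero otherwise, the dimension count gives $\dim(M(\mu)\otimes V^{\otimes\ell})^{[\lambda]}_\lambda=\dim(V^{\otimes\ell})_{\lambda-\mu}$, matching $\dim\stdH(\lambda,\mu)$ by Lemma \ref{dim_eq} when $\lambda-\mu\in P(V^{\otimes\ell})$, and matching $0$ otherwise. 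This settles the ``otherwise'' case immediately and reduces the first case to producing an $\HH_\ell$-module isomorphism between two spaces of equal dimension.

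Next I would construct the actual isomorphism. The standard module $\stdH(\lambda,\mu)=\HH_\ell\otimes_{\HH_\p}(\delta_{\tilde\delta_{\lambda,\mu}}\boxtimes\mathbb{C}_\gamma)$ is cyclic, generated over $\mathbb{C}[W]$ by the weight vector $\mathds{1}$ of $\h$-weight $\zeta_{\lambda,\mu}$. On the other side, I would exhibit a distinguished vector $v_{\mu}\otimes u$ in $(M(\mu)\otimes V^{\otimes\ell})^{[\lambda]}_\lambda$, where $v_\mu$ is the highest weight vector of $M(\mu)$ and $u\in V^{\otimes\ell}$ is the weight vector of weight $\lambda-\mu$ built as a tensor product of the extremal weight vectors of the $V$-factors arranged according to the block sizes $(\ell_1,\dots,\ell_n)$ from equation following Lemma \ref{dim_eq}; the claim is that $\Theta(\e_j)$ acts on this vector by the scalar $\zeta_{\lambda,\mu}(\e_j^\vee)$ (up to the normalization constant $\tfrac{n-1}{2}$ built into $\Theta(\e_k)$), using that the Casimir-type operators $\Omega_{0,k}$ act on $v_\mu\otimes(\cdots)$ through the highest weight $\mu$ plus $\rho$. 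This identifies $v_\mu\otimes u$ with a vector having the same $\h$-weight and the same $S(V)$-eigenvalues as $\mathds{1}$; then I would check that the $\mathbb{C}[W]$-submodule it generates is all of $(M(\mu)\otimes V^{\otimes\ell})^{[\lambda]}_\lambda$ (again a dimension/weight argument, since $\mathbb{C}[W]\mathds{1}$ already spans $\stdH(\lambda,\mu)$). By the universal property of the induced module $\HH_\ell\otimes_{\HH_\p}(\cdots)$ this produces a surjective $\HH_\ell$-homomorphism $\stdH(\lambda,\mu)\to(M(\mu)\otimes V^{\otimes\ell})^{[\lambda]}_\lambda$, which is an isomorphism by the equality of dimensions.

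The main obstacle I anticipate is the precise computation showing that $\Theta(\e_j)$ acts semisimply on $(M(\mu)\otimes V^{\otimes\ell})^{[\lambda]}_\lambda$ with exactly the eigenvalues $\zeta_{\lambda,\mu}(\e_j^\vee)$, and more generally that the $\HH_\ell$-module structure on this weight space is ``the same shape'' as a module induced from $\HH_\p$. Verifying this requires carefully tracking how the operators $\Omega_{i,j}$ interact with the Verma filtration: on each graded piece $M(\mu+\nu)$ one must understand the $\HH_\ell$-action on $M(\mu+\nu)\otimes V^{\otimes\ell}$ restricted to weight $\lambda$, and then argue that the extension data between graded pieces is controlled. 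One clean route is to reduce to the generating vector as above and invoke the general structure theory of $\HH_\ell$ (Theorem \ref{evens2.1}): since the $S(V)$-weights occurring in $(M(\mu)\otimes V^{\otimes\ell})^{[\lambda]}_\lambda$ all lie in the single $W$-orbit of $\lambda+\rho$ and the ``leading'' weight $\zeta_{\lambda,\mu}$ is the one attached to the discrete-series-times-character datum defining $\stdH(\lambda,\mu)$, a cyclicity argument pins the module down. The bookkeeping to match the combinatorial recipe for $\delta_{\lambda,\mu}$ (the segment decomposition determined by $(\ell_1,\dots,\ell_n)$ and the shifts $(\mu+\rho)(\e_i^\vee)$) against the weights appearing in $V^{\otimes\ell}$ is where the bulk of the work lies, but it is essentially the computation already carried out in \cite{AS} and should be quoted rather than redone in full.
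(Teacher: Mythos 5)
The paper does not prove this statement; it is imported verbatim as \cite[Theorem 3.3.1]{AS}, so there is no in-paper argument to compare against. Your outline is a faithful sketch of the Arakawa--Suzuki proof: the dimension count via the Verma filtration of $M(\mu)\otimes V^{\otimes\ell}$ (each subquotient $M(\mu+\nu)$ contributes to $(\cdot)^{[\lambda]}_\lambda$ only when $\mu+\nu=\lambda$, since $\mu+\nu\in W\bullet\lambda$ forces $\mu+\nu\preceq\lambda$ while $\lambda$ being a weight of $M(\mu+\nu)$ forces $\lambda\preceq\mu+\nu$) correctly disposes of the ``otherwise'' case and reduces the rest to matching dimensions, and the identification of the cyclic vector $v_\mu\otimes u$ with $\mathds{1}$ is exactly how the isomorphism is produced in \cite{AS}. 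The only under-specified step is the Frobenius-reciprocity input: to induce a map out of $\HH_\ell\otimes_{\HH_\p}(\delta_{\tilde\tau}\boxtimes\mathbb{C}_\gamma)$ you need $v_\mu\otimes u$ to span a one-dimensional $\HH_\p$-submodule, which requires not only the $S(V)$-eigenvalue computation you describe but also that each $\Theta(s_i)=-\Omega_{i,i+1}$ with $s_i\in W_\p$ acts by $-1$ on it, and you still owe surjectivity of the resulting map before the dimension count closes the argument; both points are part of the \cite{AS} computation you are (reasonably) quoting rather than redoing.
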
 
In order to restate this theorem using the geometric parametrizations of standard and irreducible modules in the respective categories, we need to be working in the equal rank case, where $n=\ell$.
\begin{cor}
Suppose $\lambda$ is regular, integral, and dominant, and $\eta=0$. The geometric parameters for $\N(\xi(\lambda),\eta)$ are $B$ orbits on the flag variety $G/B$. Let $C(w)$ denote the $B$ orbit corresponding to $w\in W$. 
\beq
\left(M(\mu)\otimes V^{\otimes n}\right)^{[\lambda]}_\lambda=\begin{cases}
\stdH(\Psi(C(w)))\text{ if $\Psi(C(w))\neq 0$}\\
0\text{ otherwise}
\end{cases}
\eeq
Notice that $\Psi(C(w))\neq 0$ precisely when $\lambda-w\bullet\lambda\in P(V^{\otimes n})$. 
\end{cor}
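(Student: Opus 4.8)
The plan is to deduce this corollary directly from Theorem~\ref{AS} together with the definition of the map $\Psi$ from Section~\ref{geo_sec}. First I would observe that, since $\lambda$ is regular, integral, and dominant and $\eta=0$, we have $W_\lambda=\{e\}=W_\eta$, so the hypothesis $W_\eta=W_\lambda$ of Section~\ref{geo_sec} is satisfied and the map $\Psi\colon\mathcal{P}_\N(\eta,\lambda)\to\mathcal{P}_\HH(\lambda+\rho)\cup\{0\}$ is defined. In this case $\p_\eta=\bb$, so $\mathcal{P}_\N(\eta,\lambda)$ is the set of $B$-orbits $C(w)$ on $G/B$, indexed by $w\in W$, and the corresponding standard Whittaker module is the Verma module $M(w\bullet\lambda)=\stdN(w\bullet\lambda,0)$. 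So the left-hand side of the claimed identity, with $\mu=w\bullet\lambda$, is exactly the object appearing in Theorem~\ref{AS}.

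Next I would invoke Theorem~\ref{AS} with $\mu=w\bullet\lambda$: it gives $\left(M(w\bullet\lambda)\otimes V^{\otimes n}\right)^{[\lambda]}_\lambda\cong\stdH(\lambda,w\bullet\lambda)$ when $\lambda-w\bullet\lambda\in P(V^{\otimes n})$, and $0$ otherwise. It then remains to match $\stdH(\lambda,w\bullet\lambda)$ with $\stdH(\Psi(C(w)))$ and to verify that the vanishing condition $\lambda-w\bullet\lambda\in P(V^{\otimes n})$ is precisely the condition $\Psi(C(w))\neq 0$. For the first point, I would recall from the combinatorial discussion in Section~\ref{multiseg-sec} that $\stdH(\lambda,\mu)=\stdH(\tilde\delta_{\lambda,\mu})$ and that the multisegment $\delta_{\lambda,w\bullet\lambda}$ lies in $\mathrm{MS}(\lambda+\rho)$, hence corresponds under the Bernstein–Zelevinsky bijection to a geometric parameter in $\mathcal{P}_\HH(\lambda+\rho)$, namely an $L$-orbit $X_{\delta_{\lambda,w\bullet\lambda}}\subset\g_1(\sigma)$ with $\sigma$ corresponding to $\lambda+\rho$. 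The content to check is that this $L$-orbit is exactly $\Phi^{-1}(C(w))$, i.e.\ that $\Phi(X_{\delta_{\lambda,w\bullet\lambda}})=P\cdot\varphi(X_{\delta_{\lambda,w\bullet\lambda}})$ is the $P$-orbit on $G/P=G/B$ corresponding to $w$; this is a compatibility between Zelevinsky's map $\varphi$ of Theorem~\ref{aff_to_proj} and the weight-bookkeeping implicit in $\delta_{\lambda,\mu}$, and it is essentially forced by comparing the cyclic weight $\zeta_{\lambda,w\bullet\lambda}$ with the combinatorics of the $B$-orbit $C(w)$.

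For the vanishing condition, I would argue that $\Psi(C(w))\neq 0$ means by definition $C(w)\in\operatorname{im}\Phi$, and since $\Phi$ is the induced-on-orbits version of $\varphi\colon\g_1(\sigma)\to F(V)$, the orbit $C(w)$ is in the image of $\Phi$ exactly when there is a graded nilpotent class mapping onto it, which by the multisegment description translates into the existence of $(\ell_1,\dots,\ell_n)$ with $\sum\ell_i=n$ realizing $\lambda-w\bullet\lambda$ as a weight of $V^{\otimes n}$; that is, $\lambda-w\bullet\lambda\in P(V^{\otimes n})$. Assembling these identifications gives the displayed formula and the final sentence of the statement.

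The main obstacle I anticipate is the bookkeeping in the second paragraph: precisely matching the geometric parameter attached to the multisegment $\delta_{\lambda,w\bullet\lambda}$ (via Bernstein–Zelevinsky and the $L$-orbit $X_{\delta_{\lambda,w\bullet\lambda}}$ on $\g_1(\sigma)$) with the $B$-orbit $C(w)$ under $\Phi$, keeping careful track of the $\rho$-shift (note $\stdH(\lambda,w\bullet\lambda)$ has central character $\chi_{\lambda+\rho}$, consistent with $\sigma\leftrightarrow\lambda+\rho$) and of the dominance normalization of the segments. Everything else is a direct citation of Theorem~\ref{AS} and the definitions, but this dictionary between the algebraic parameter $\mu=w\bullet\lambda$ and the geometric parameter $\Psi(C(w))$ is where the real work lies.
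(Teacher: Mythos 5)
Your proposal is correct and follows the same route the paper intends: the paper gives no explicit proof of this corollary, treating it as a direct restatement of Theorem~\ref{AS} (with $\mu=w\bullet\lambda$) through the dictionary between the combinatorial parameter $\stdH(\lambda,w\bullet\lambda)$ and the geometric parameter $\Psi(C(w))$, together with the observation that $W_\lambda=W_\eta=\{e\}$ and $P=P_\eta=B$ in this setting. You correctly isolate the only nontrivial point --- matching the multisegment $\delta_{\lambda,w\bullet\lambda}$ with $\Phi^{-1}(C(w))$ and identifying the vanishing condition with $\lambda-w\bullet\lambda\in P(V^{\otimes n})$ --- which is exactly the bookkeeping the paper leaves implicit.
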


\end{subsubsection}
\end{subsection}

\begin{subsection}{Arakawa-Suzuki functors for Whittaker modules}
For $\lambda \in\h^\ast$, we define the following functor from $\N(\eta)$ to the category of finite-dimensional vector spaces:
\begin{equation}
F_{\ell,\eta,\lambda}(X):=H^0_\eta\left(\n_\eta, (X\otimes V^{\otimes \ell})^{[\lambda]}_{\lambda_\z}\right).
\end{equation}
Since the action of $\HH_\ell$ on $X\otimes V^{\otimes \ell}$ commutes with the action of $\g$ (Proposition \ref{commutes}), $F_{\ell,\eta,\lambda}(X)$ has the structure of an $\HH_\ell$-module. 
\begin{prop}
For $\lambda$ dominant, $F_{\ell,\eta,\lambda}$ is an exact functor from $\N(\eta)$ to $\mathcal{H}$. 
\end{prop}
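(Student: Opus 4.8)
The plan is to assemble the exactness and finite-dimensionality from the three exactness statements already available in the excerpt, applied in sequence, together with the fact that the $\HH_\ell$-action descends to the subquotients at each stage. First I would fix a dominant $\lambda$ and a short exact sequence $0\to A\to B\to C\to 0$ in $\N_\g(\eta)$, and trace what happens to it under the three operations that compose to give $F_{\ell,\eta,\lambda}$: (i) tensoring with the finite-dimensional module $V^{\otimes\ell}$; (ii) projecting onto the generalized $Z(\g)$-infinitesimal character $[\lambda]$ and the generalized $\z$-weight space $\lambda_\z$; (iii) taking the degree-zero $\eta$-twisted $\n_\eta$-cohomology $H^0_\eta(\n_\eta,-)$.

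For step (i), tensoring with a fixed finite-dimensional vector space is exact, and by Theorem~\ref{kt} (and the remark following the definition of $F_{\ell,\eta,\lambda}$) $B\otimes V^{\otimes\ell}$ remains in $\N_\g(\eta)$; so $0\to A\otimes V^{\otimes\ell}\to B\otimes V^{\otimes\ell}\to C\otimes V^{\otimes\ell}\to 0$ is exact in $\N_\g(\eta)$. For step (ii), Proposition~\ref{ex_inf} gives exactness of projection onto $X^{[\lambda]}$, and Proposition~\ref{ex_wt} gives exactness of passing to the generalized $\z$-weight space $X_{\lambda_\z}$; since (as noted in the introduction) these two projections commute, their composite is exact, and by Proposition~\ref{ex_wt} the result $(X\otimes V^{\otimes\ell})^{[\lambda]}_{\lambda_\z}$ lies in $\N_{\s}(\eta_\s)$ — in particular it is a Whittaker module for the semisimple Lie algebra $\s$ with nondegenerate character $\eta_\s$. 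For step (iii), the nondegeneracy of $\eta_\s$ on $\s$ is exactly the hypothesis of Lemma~\ref{exact}, which says $H^0_{\eta_\s}(\n_\eta,-)$ is an exact functor on $\N_\s(\eta_\s)$ (and $H^i_{\eta_\s}(\n_\eta,-)=0$ for $i\ge 1$, which is what makes $H^0$ exact rather than merely left exact). Composing the three exact functors yields exactness of $F_{\ell,\eta,\lambda}$. Finite-dimensionality of the target follows because by Theorem~\ref{kt} applied inside $\N_\s(\eta_\s)$ (via Theorem~\ref{mcd}, as in the proof of Proposition~\ref{ex_wt}) the object $(X\otimes V^{\otimes\ell})^{[\lambda]}_{\lambda_\z}$ has finite length with each composition factor of the form $Y(\xi_{\s}(\nu),\eta_\s)$, whose space of Whittaker vectors is one-dimensional; hence $\dim F_{\ell,\eta,\lambda}(X)$ is finite and bounded by that length. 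Finally, the $\HH_\ell$-module structure: by Proposition~\ref{commutes} the $\HH_\ell$-action on $X\otimes V^{\otimes\ell}$ commutes with $U(\g)$, hence preserves $Z(\g)$-generalized eigenspaces and $\z$-generalized weight spaces and the $\eta$-twisted $\n_\eta$-invariants, so it descends to $F_{\ell,\eta,\lambda}(X)$; naturality of all constructions in $X$ makes the morphisms $\HH_\ell$-equivariant.

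The only real subtlety — and the step I expect to require the most care — is the bookkeeping in step (iii): Lemma~\ref{exact} is stated for the $\n$-cohomology of a semisimple $\g$ with nondegenerate $\eta$, so I must make sure the reduction via Proposition~\ref{ex_wt} genuinely places me in that situation, i.e. that $\n_\eta=\n\cap\s$ is the nilradical of the Borel of $\s$ and that $\eta_\s$ is nondegenerate on $\s$ (which holds since $\Pi_\eta$ are exactly the simple roots of $\s$, so $\eta$ is nonzero on every simple root space of $\s$). Everything else is a routine concatenation of results already in hand; I would simply state the composition and cite Theorem~\ref{kt}, Proposition~\ref{ex_inf}, Proposition~\ref{ex_wt}, Lemma~\ref{exact}, and Proposition~\ref{commutes} in that order.
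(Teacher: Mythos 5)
Your proposal is correct and follows essentially the same route as the paper: exactness is obtained by composing the three exact operations cited (Proposition~\ref{ex_inf}, Proposition~\ref{ex_wt}, Lemma~\ref{exact}, the last applicable precisely because Proposition~\ref{ex_wt} lands you in $\N_\s(\eta_\s)$ with $\eta_\s$ nondegenerate), and the $\HH_\ell$-equivariance of the induced maps comes from the fact that the $\HH_\ell$-action is built from the operators $\Omega_{i,j}$, which commute with $\phi\otimes\mathrm{id}$. Your additional remarks on finite-dimensionality (finite length in $\N_\s(\eta_\s)$ plus one-dimensionality of Whittaker vectors in each Kostant module) fill in a point the paper leaves implicit, but the argument is the same in substance.
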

 \begin{proof}
 Following Proposition \ref{ex_inf}, Lemma \ref{exact}, and Proposition \ref{ex_wt}, we can see that $F_{\ell,\eta,\lambda}$ is exact when viewed as a functor from $\N(\eta)$ to the category of vector spaces. In order to see that $F_{\ell,\eta,\lambda}$ takes exact sequences of $U(\g)$-modules to exact sequences of $\HH_\ell$-modules (not just vector spaces), we will show that given a morphism of $U(\g)$-modules $\phi$, the linear map $F_{\ell,\eta,\lambda}(\phi)$ is actually a morphism of $\HH_\ell$-modules. Suppose $\phi:A\rightarrow B$ is a morphism of $U(\g)$-modules. Recall the operators $\Omega_{i,j}\in U(\g)^{\otimes\ell+1}$ from Section 3.4. Let $\phi':A\otimes V^{\otimes \ell}\rightarrow B\otimes V^{\otimes \ell}$ be defined by $\phi'(a\otimes v_1\otimes \cdots\otimes v_{\ell})=\phi(a)\otimes v_1\otimes \cdots\otimes v_{\ell}$. If $a\in A$, then $\Omega_{i,j}(\phi'(a\otimes v_1\otimes \cdots\otimes v_{\ell}))=\Omega_{i,j}(\phi(a)\otimes v_1\otimes \cdots\otimes v_{\ell})=\phi'(\Omega_{i,j}(a\otimes v_1\otimes \cdots\otimes v_{\ell}))$ then $F_{\ell,\eta,\lambda}(\phi)$. Since the action of $\HH_\ell$ is defined in terms of the operators $\Omega_{i,j}$, we have that $h\phi'(a\otimes v_1\otimes \cdots\otimes v_{\ell})=\phi'(h(a\otimes v_1\otimes \cdots\otimes v_{\ell}))$ for $h\in\HH_\ell$. It follows that $F_{\ell,\eta,\lambda}(\phi)$ is a morphism of $\HH_\ell$-modules. Hence $F_{\ell,\eta,\lambda}$ is an exact functor from $\N(\eta)$ to $\mathcal{H}$.
 \end{proof}
\begin{subsubsection}{Images of Standard Modules}

 In this section we will calculate the image of standard Whittaker modules. 
 
\begin{thm}{\label{induced_morph}}
The functor $\overline{\Gamma}_\eta$ induces a morphism of $\HH_\ell$-modules 
\beq
\overline{\Gamma}^\HH_\eta(\lambda,\mu):\text{Hom}_{U(\g)}\left(M(\lambda),M(\mu)\otimes V^{\otimes \ell}\right)\rightarrow H^0_\eta\left(\n_\eta, \left(\stdN(\mu,\eta)\otimes V^{\otimes \ell}\right)^{[\lambda]}_{\lambda_\z}\right).
\eeq
Moreover, $\overline{\Gamma}^\HH_\eta(\lambda,\mu)$ is an isomorphism if $\lambda$ is dominant and $W_\eta=W_\lambda$.
\end{thm}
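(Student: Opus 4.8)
The plan is to build the map $\overline{\Gamma}^\HH_\eta(\lambda,\mu)$ by applying Backelin's functor $\overline{\Gamma}_\eta$ to morphisms and then identifying the target correctly. First I would observe that for any $U(\g)$-module morphism $\phi\colon M(\lambda)\to M(\mu)\otimes V^{\otimes\ell}$, applying the exact functor $\overline{\Gamma}_\eta$ produces a morphism $\overline{\Gamma}_\eta(\phi)\colon \stdN(\lambda,\eta)\to \overline{\Gamma}_\eta(M(\mu)\otimes V^{\otimes\ell})$, since $\overline{\Gamma}_\eta(M(\lambda))=\stdN(\lambda,\eta)$ by Theorem \ref{back}(1). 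By Proposition \ref{ten_com}, $\overline{\Gamma}_\eta(M(\mu)\otimes V^{\otimes\ell})=\stdN(\mu,\eta)\otimes V^{\otimes\ell}$, so $\overline{\Gamma}_\eta(\phi)$ is a morphism $\stdN(\lambda,\eta)\to\stdN(\mu,\eta)\otimes V^{\otimes\ell}$. Now $\stdN(\lambda,\eta)$ has infinitesimal character $\xi(\lambda)$, and it is generated over $U(\g)$ by its space of $\eta$-Whittaker vectors (this is the content of Backelin's Lemma 6.5 quoted in the excerpt, applied via the standard filtration); moreover that Whittaker space lies in a single generalized $\z$-weight space because $\stdN(\lambda,\eta)\cong U(\bar\n^\eta)\otimes_\C Y(\xi_\eta(\lambda),\eta)$ as $\lev_\eta$-modules with $Y$ having a single $\z$-weight. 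Hence the image of $\overline{\Gamma}_\eta(\phi)$ lands in the $\xi(\lambda)$-isotypic, $\lambda_\z$-generalized-weight part, and restricting to Whittaker vectors gives an element of $H^0_\eta(\n_\eta,(\stdN(\mu,\eta)\otimes V^{\otimes\ell})^{[\lambda]}_{\lambda_\z})$. I would then check that this assignment is linear and that it intertwines the $\HH_\ell$-actions: the $\HH_\ell$-action on both sides comes from the operators $\Omega_{i,j}$ acting on the $V^{\otimes\ell}$ tensor factors, and $\overline{\Gamma}_\eta$ (being built from weight-space completion, which commutes with tensoring by the finite-dimensional $V^{\otimes\ell}$ by Lemma \ref{tensor_prod}, and from the $\eta$-finiteness functor) is compatible with these operators since they act only on the $V$-factors; this is essentially the same argument used in the preceding exactness proposition for $F_{\ell,\eta,\lambda}(\phi)$.

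For the isomorphism statement under the hypothesis $\lambda$ dominant and $W_\eta=W_\lambda$, I would argue by a dimension count combined with injectivity. On the source side, $\mathrm{Hom}_{U(\g)}(M(\lambda),M(\mu)\otimes V^{\otimes\ell})$ — by the Arakawa-Suzuki analysis (Theorem \ref{AS} and the identification of $F_{\ell,\lambda}$ on Verma modules) — is isomorphic as an $\HH_\ell$-module to $(M(\mu)\otimes V^{\otimes\ell})^{[\lambda]}_\lambda$, which is $\stdH(\lambda,\mu)$ when $\lambda-\mu\in P(V^{\otimes\ell})$ and $0$ otherwise; its dimension is $\dim(V^{\otimes\ell})_{\lambda-\mu}$ by Lemma \ref{dim_eq}. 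On the target side, I would compute $\dim H^0_\eta(\n_\eta,(\stdN(\mu,\eta)\otimes V^{\otimes\ell})^{[\lambda]}_{\lambda_\z})$ directly: by Corollary \ref{dwt}, $\stdN(\mu,\eta)\otimes V^{\otimes\ell}$ has a filtration by standard Whittaker modules $\stdN(\mu+\nu,\eta)$ over the weights $\nu$ of $V^{\otimes\ell}$; projecting onto the $\xi(\lambda)$-infinitesimal block and onto the $\lambda_\z$-weight keeps exactly those $\stdN(\mu+\nu,\eta)$ with $\mu+\nu\in W_\eta\bullet\lambda$ (and, in the equal-rank setting with $W_\eta=W_\lambda$, each such $\nu$ contributes a distinct standard factor with $\nu=\lambda-\mu$ up to $W_\lambda$-dot-orbit representatives landing in the right $\z$-weight); then the exact functor $H^0_\eta(\n_\eta,-)$ applied to each standard factor $\stdN(\mu+\nu,\eta)_{\lambda_\z}$ — which by Theorem \ref{mcd} and Kostant's Theorem \ref{kt} has $\eta$-Whittaker space of dimension equal to the relevant finite-dimensional multiplicity — yields a total dimension matching $\dim(V^{\otimes\ell})_{\lambda-\mu}$. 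The hypothesis $W_\eta=W_\lambda$ is what guarantees the bookkeeping of $\z$-weights lines up so that the two filtrations have the same length and term-by-term dimensions; this is also precisely the hypothesis under which $\p_\eta=P$ in the geometric picture of Section \ref{geo_sec}.

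Finally I would establish that $\overline{\Gamma}^\HH_\eta(\lambda,\mu)$ is surjective (or injective), which together with the equality of dimensions forces bijectivity. Surjectivity should follow because $\overline{\Gamma}_\eta$ is exact and sends the standard filtration of $M(\mu)\otimes V^{\otimes\ell}$ to the standard filtration of $\stdN(\mu,\eta)\otimes V^{\otimes\ell}$, so every Whittaker vector in the target block is hit after passing to associated graded and lifting — more precisely, a morphism $M(\lambda)\to M(\mu)\otimes V^{\otimes\ell}$ is determined by the image of the highest weight vector, which must be a weight-$\lambda$ singular vector, and $\overline{\Gamma}_\eta$ carries such singular vectors to $\eta$-Whittaker vectors generating the corresponding standard sub-Whittaker-modules, giving a term-by-term surjection on the two filtrations. \textbf{The main obstacle} I anticipate is the careful identification of the target as living in a \emph{single} generalized $\z$-weight space and showing that the Whittaker-vector functor does not lose or gain dimension when passing between the Verma picture and the completion picture — i.e., verifying that $H^0_\eta(\n_\eta,-)$ of the $\z$-weight-restricted tensor product genuinely computes the same number as $\mathrm{Hom}_{U(\g)}(M(\lambda),-)$ of the un-completed tensor product. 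This requires combining Backelin's Lemma 6.5, Proposition \ref{ex_wt}, and the Kostant dimension formula (Theorem \ref{kt}) in a compatible way, and it is exactly here that the dominance of $\lambda$ and the equality $W_\eta=W_\lambda$ must be used essentially rather than incidentally.
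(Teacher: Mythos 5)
Your construction of the morphism follows the paper's route: apply $\overline{\Gamma}_\eta$ to $\phi$, use Theorem \ref{back}(1) and Proposition \ref{ten_com} to identify source and target as $\stdN(\lambda,\eta)$ and $\stdN(\mu,\eta)\otimes V^{\otimes\ell}$, identify $\text{Hom}_{U(\g)}(\stdN(\lambda,\eta),-)$ with the functor $H^0_\eta\left(\n_\eta,(-)^{[\lambda]}_{\lambda_\z}\right)$ via evaluation at the canonical generator, and check $\HH_\ell$-equivariance through the operators $\Omega_{i,j}$, which act only on the $V$-factors. The source-side dimension count, $\text{dim}\,\text{Hom}_{U(\g)}(M(\lambda),M(\mu)\otimes V^{\otimes\ell})=\text{dim}(V^{\otimes\ell})_{\lambda-\mu}$ via projectivity of $M(\lambda)$ for dominant $\lambda$, also matches the paper.

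The gap is in the last step. Equal dimensions on the two sides do not give an isomorphism without injectivity or surjectivity of the specific map $\overline{\Gamma}^\HH_\eta(\lambda,\mu)$, and your surjectivity argument (``hit after passing to associated graded and lifting'') is an assertion rather than a proof: to run it you would need to show the comparison map respects the two standard filtrations and is an isomorphism on \emph{every} graded piece, in particular that the pieces $\stdN(w\bullet\lambda,\eta)$ with $w\bullet\lambda\neq\lambda$ contribute nothing to the target after applying $H^0_\eta\left(\n_\eta,(-)_{\lambda_\z}\right)$ --- and that is exactly where dominance and $W_\eta=W_\lambda$ must enter, which your sketch does not carry out. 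The paper sidesteps surjectivity and instead proves \emph{injectivity} directly: the canonical generator of $\stdN(\lambda,\eta)=\overline{\Gamma}_\eta(M(\lambda))$, written as a formal sum $\sum_\nu x_\nu$ of $\h$-weight vectors of $M(\lambda)$, has $\z$-weight $\lambda$, so its nonzero components lie in $U(\lev_\eta)v_\lambda$; since $W_\eta=W_\lambda$ this is an irreducible $U(\lev_\eta)$-module, so any nonzero $\phi$ (which is determined by $\phi(v_\lambda)\neq 0$) satisfies $\phi(x_\nu)\neq 0$ for every such component, and because $\phi$ preserves $\h$-weight spaces these images cannot cancel in the formal sum; hence $\overline{\Gamma}_\eta(\phi)\neq 0$. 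Injectivity combined with the \emph{upper bound} $\text{dim}\,\text{Hom}_{U(\g)}(\stdN(\lambda,\eta),\stdN(\mu,\eta)\otimes V^{\otimes\ell})\le\text{dim}(V^{\otimes\ell})_{\lambda-\mu}$, coming from the filtration of Corollary \ref{dwt} and $W_\eta\bullet\lambda=\lambda$, then forces the map to be an isomorphism; note the paper never needs the exact target dimension your direct count would require. If you want to keep your architecture, you must either supply this injectivity argument or genuinely carry out the graded-piece induction for surjectivity.
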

\begin{proof}
We will begin by constructing a natural isomorphism 
 \beq
 \text{Hom}_{U(\g)}\left(\stdN(\lambda,\eta),\stdN(\mu,\eta)\otimes V^{\otimes \ell}\right)\cong H^0_\eta\left(\n_\eta, \left(\stdN(\mu,\eta)\otimes V^{\otimes \ell}\right)^{[\lambda]}_{\lambda_\z}\right).
 \eeq 
From Corollary \ref{dwt}, we see that $(\stdN(\mu,\eta)\otimes V^{\otimes \ell})^{[\lambda]}$ has a filtration with subquotients isomorphic to $\stdN(\lambda+\nu,\eta)$, where $\nu$ is a weight of $V^{\otimes \ell}$ and $\lambda+\nu\in W\bullet\lambda$.  Recall that $P(V^{\otimes n})$ denotes the set of nonzero $\h$-weights of $V^{\otimes n}$. By Theorem \ref{mcd}, we have that the nonzero $\z$-weight spaces of $\stdN(\lambda+\nu,\eta)$ have weights $w\bullet\lambda+\gamma$ for $\gamma\in P(U(\bar{\n}^\eta))$. Notice that $\mu\in \h^\ast$ and $w\bullet\mu$ are equal as $\z$-weights if $w\in W_\eta$. Since $\lambda$ is dominant, we have that $\stdN(w\bullet\lambda,\eta)$ has a nonzero $\z$-weight space of weight $\lambda$ if and only if $w\in W_\eta$. Moreover, since the action of $\n^\eta$ takes a $\z$-weight vector of weight $\lambda$ to a vector of weight $\lambda+\gamma'$ for $\gamma'\in P(\n^\eta)$, and $(w\bullet\lambda+P(U(\bar{\n}^\eta)))\cap( \lambda+P(\n^\eta))=\emptyset$, we can conclude that $\n^\eta v=0$ for all $\z$-weight vectors $v$ of weight $\lambda$ in $\stdN(w\bullet\lambda,\eta)$. In conclusion,  $v\in \stdN(\mu,\eta)\otimes V^{\otimes \ell}$ is contained in $ H^0_\eta\left(\n_\eta, \left(\stdN(\mu,\eta)\otimes V^{\otimes \ell}\right)^{[\lambda]}_{\lambda_\z}\right)$ if and only if $v$ is a $\z$-weight vector with weight $\lambda$, and has $xv=\eta(x)v$ for all $x\in \n$. Since $\stdN(\lambda,\eta)$ is isomorphic to $U(\g)v$ for each $v\in H^0_\eta\left(\n_\eta, \left(\stdN(\mu,\eta)\otimes V^{\otimes \ell}\right)^{[\lambda]}_{\lambda_\z}\right)$, we can define a morphism $\phi_v:\stdN(\lambda,\eta)\rightarrow\stdN(\mu,\eta)\otimes V^{\otimes \ell}$ for each $v\in H^0_\eta\left(\n_\eta, \left(\stdN(\mu,\eta)\otimes V^{\otimes \ell}\right)^{[\lambda]}_{\lambda_\z}\right)$. Alternatively, for each $\phi\in \text{Hom}_{U(\g)}(\stdN(\lambda,\eta),\stdN(\mu,\eta)\otimes V^{\otimes \ell})$, we can define a vector $v_\phi:=\phi(1\otimes 1\otimes 1)\in \stdN(\mu,\eta)\otimes V^{\otimes \ell}$ by considering the image of the canonical generator $1\otimes 1\otimes 1$ of $\stdN(\lambda,\eta)=U(\g)\otimes_{U(\p_\eta)}\big(U(\lev_\eta)/\xi_\eta(\lambda)U(\lev_\eta)
\otimes_{U(\n_\eta)}\mathbb{C}_\eta\big)$. Since $\phi$ is morphism of $U(\g)$-modules, it follows that $v_\phi$ is a $\z$-weight vector of weight $\lambda$ and $xv_\phi=\eta(x)v_\phi$ for all $x\in \n$. Moreover, the maps $v\mapsto \phi_v$ and $\phi\mapsto v_\phi$ are inverses of each other, which proves the natural identification described above.

Consider the map induced on morphisms by the functor $\overline{\Gamma}_\eta$. Let $\phi\in \text{Hom}_{U(\g)}(A, B)$ for $A,B$ in BGG category $\mathcal{O}$. Then $\overline{\Gamma}_\eta(\phi)\in \text{Hom}_{U(\g)}\left(\overline{\Gamma}_\eta(A),\overline{\Gamma}_\eta(B)\right)$ is given by
\beq
\overline{\Gamma}_\eta(\phi)\left(\sum_{\nu\in P(A)} x_\nu\right)=\sum_{\nu\in P(A)}\phi(x_\nu),
\eeq
where we view elements of $\overline{\Gamma}_\eta(A)$ (elements of $\overline{\Gamma}_\eta(B)$) as formal infinite linear sums of weight vectors of $A$ (weight vectors of $B$, respectively). Now, the action of $\HH_\ell$ on $\text{Hom}_{U(\g)}(M(\lambda),M(\mu)\otimes V^{\otimes \ell})$ (which we will denote here by $h.\phi$) is given by the action of $\HH_\ell$ on $M(\mu)\otimes V^{\otimes \ell}$. By Proposition \ref{ten_com}, we have that 
$$\text{Hom}_{U(\g)}\left(\overline{\Gamma}_\eta(M(\lambda)),\overline{\Gamma}_\eta(M(\mu)\otimes V^{\otimes\ell})\right)=\text{Hom}_{U(\g)}\left(\overline{\Gamma}_\eta(M(\lambda)),\overline{\Gamma}_\eta(M(\mu))\otimes V^{\otimes\ell}\right)$$
Since $\overline{\Gamma}_\eta(M(\lambda))$ is a $U(\g)$-module, we see that $\HH_\ell$ acts on $\text{Hom}_{U(\g)}\left(\overline{\Gamma}_\eta(M(\lambda)),\overline{\Gamma}_\eta(M(\mu))\otimes V^{\otimes\ell}\right)$ (which we will denote again by $h.\phi$). For $h\in\HH_\ell$ we have
\beq
h.\overline{\Gamma}_\eta(\phi)\left(\sum_{\nu\in P(M(\lambda))} x_\nu\right)=h\sum_{\nu\in P(M(\lambda))}\phi(x_\nu)=\sum_{\nu\in P(M(\lambda))}h\phi(x_\nu)=\overline{\Gamma}_\eta(h.\phi)\left(\sum_{\nu\in P(M(\lambda))} x_\nu\right).
\eeq
Therefore, $\overline{\Gamma}_\eta$ induces a morphism of $\HH_\ell$-modules. We will now turn our focus to showing that the induced morphism of $\HH_\ell$-modules is an isomorphism when $W_\eta=W_\lambda$, and $\mu$ is $\n_\eta$ antidominant. The generating vector $1\otimes 1 \otimes 1$ of $\stdN(\lambda,\eta)$ has $\z$-weight $\lambda$. Therefore, if we write the generating vector as a linear combination $\sum_{\nu\in P(M(\lambda))}x_\nu$, we have
\beq
z\sum_{\nu\in P(M(\lambda))}x_\nu=\sum_{\nu\in P(M(\lambda))}zx_\nu=\sum_{\nu\in P(M(\lambda))}\nu(z)x_\nu=\lambda(z)\sum_{\nu\in P(M(\lambda))}x_\nu,
\eeq
for all $z\in\z$. Therefore, the $\nu\in P(M(\lambda))$ for which $x_\nu$ is nonzero must have the property that $\nu(z)=\lambda(z)$ for all $z\in \z$. In other words, the $\nu\in P(M(\lambda))$ for which $x_\nu$ is nonzero must be contained in the set $P(U(\lev_\eta)v_\lambda)$, where $v_\lambda$ is a $\lambda$-highest weight vector in $M(\lambda)$. Since $W_\lambda=W_\eta$, we have that $U(\lev_\eta)v_\lambda$ is an irreducible $U(\lev_\eta)$-module. Therefore, if $\phi\in \text{Hom}_{U(\g)}(M(\lambda),M(\mu)\otimes V^{\otimes \ell})$ and $\phi(v_\lambda)\neq 0$, then $\phi(w)\neq 0$ for all (nonzero) $w\in U(\lev_\eta)v_\lambda$. Since $\phi$ is determined completely by its value on $v_\lambda$, we have that $\phi\neq 0$ implies that $\phi(v_\lambda)\neq 0$. This implies that $ \phi(x_\nu)\neq 0$ for each $x_\nu$ which appears in the sum decomposition of the generating vector of $\stdN(\lambda,\eta)$. Observe that since $\phi$ is a morphism of $U(\g)$-modules, it preserves weight spaces. Therefore, if $\nu_1\neq \nu_2\in\h^\ast$, we have $\phi(x_{\nu_1})+\phi(x_{\nu_2})=0$ if and only if $\phi(x_{\nu_1}),\phi(x_{\nu_2})=0$. So if $\phi\neq 0$, then 
\beq
\overline{\Gamma}_\eta(\phi)\left( \sum_{\nu\in P(M(\lambda))} x_\nu\right)=\sum_{\nu\in P(M(\lambda))}\phi(x_\nu)\neq 0.
\eeq
 We have therefore shown that the map $\overline{\Gamma}^\HH_\eta(\lambda,\mu)$ is injective. To conclude the proof, we will show that
\beq
\text{dimHom}_{U(\g)}(M(\lambda),M(\mu)\otimes V^{\otimes \ell})=\text{dimHom}_{U(\g)}\left(\stdN(\lambda,\eta),\stdN(\mu,\eta)\otimes V^{\otimes \ell}\right).
\eeq
Since $\lambda$ is dominant, $M(\lambda)$ is a projective object in $\mathcal{O}$, and the left hand side is equal to $[M(\mu)\otimes V^{\otimes \ell}:L(\lambda)]$, cf.~\cite{h}. Again, since $\lambda$ is dominant, $[M(w\bullet\lambda):L(\lambda)]$ is zero unless $w\bullet\lambda=\lambda$, in which case the multiplicity is 1. Therefore, the left hand side is equal to $\text{dim}(V^{\otimes \ell})_{\lambda-\mu}$ . On the right hand side, we have that $\stdN(\mu,\eta)\otimes V^{\otimes \ell}$ has a filtration with quotients isomorphic to $\stdN(\mu+\tau,\eta)$ for $\tau\in P(V^{\otimes \ell})$. So \beq 
\text{dimHom}_{U(\g)}(\stdN(\lambda,\eta),\stdN(\mu,\eta)\otimes V^{\otimes \ell})\le \sum_{\tau\in W_\eta\bullet\lambda-\mu} \text{dim}V^{\otimes \ell}_{\tau}.
\eeq
Since $W_\eta\bullet\lambda=\lambda$, we have that $\text{dimHom}_{U(\g)}(\stdN(\lambda,\eta),\stdN(\mu,\eta)\otimes V^{\otimes \ell})\le \text{dim}(V^{\otimes \ell})_{\lambda-\mu}$. Since $\overline{\Gamma}_\eta^\HH(\lambda,\mu)$ is injective, the inequality must be an equality, and the map must be an isomorphism. 
\end{proof}

\begin{thm}{\label{stds}}
Let $\lambda$ be dominant, integral, and suppose $W_\eta=W_\lambda$. Then
\beq
F_{\ell,\eta,\lambda}(\stdN(y\bullet\lambda,\eta))\cong\begin{cases}
 \stdH(\lambda, y\bullet\lambda)\quad\text{ if $\lambda-y\bullet\lambda\in P(V^{\otimes \ell})$}\\
 0\qquad\qquad\quad\hspace{15pt}\text{otherwise}\end{cases}
\eeq
Alternatively, under the geometric parametrization of standard objects (with the notation of Section 4), we have 
\beq
F_{\ell,\eta,\lambda}(\stdN(\mathcal{O}))\cong\begin{cases}
\stdH(\Psi(\mathcal{O}))\quad\text{ if $\Psi(\mathcal{O})\neq0$}\\
 0\qquad\qquad\quad\hspace{10pt}\text{otherwise}\end{cases}
\eeq
where $\mathcal{O}$ is a $P$ orbit on $G/P$ (recall that $P$ is a parabolic subgroup of $G$ whose Levi subgroup has Weyl group equal to $W_\lambda$). 
\end{thm}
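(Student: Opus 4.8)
The plan is to transport the computation into the highest weight category by means of Backelin's functor $\overline{\Gamma}_\eta$, and then invoke the Arakawa--Suzuki theorem. Fix $\mu=y\bullet\lambda$. By definition $F_{\ell,\eta,\lambda}(\stdN(\mu,\eta))=H^0_\eta\big(\n_\eta,(\stdN(\mu,\eta)\otimes V^{\otimes\ell})^{[\lambda]}_{\lambda_\z}\big)$, so since $\lambda$ is dominant and $W_\eta=W_\lambda$, Theorem \ref{induced_morph} supplies an isomorphism of $\HH_\ell$-modules
\[
F_{\ell,\eta,\lambda}(\stdN(\mu,\eta))\;\cong\;\text{Hom}_{U(\g)}\big(M(\lambda),\,M(\mu)\otimes V^{\otimes\ell}\big),
\]
where $\HH_\ell$ acts on the right-hand side through its action on $M(\mu)\otimes V^{\otimes\ell}$ (Proposition \ref{commutes}). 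If the proof of Theorem \ref{induced_morph} requires $\mu$ to be $\n_\eta$-antidominant, one first replaces $y\bullet\lambda$ by the $\n_\eta$-antidominant representative of $W_\eta\bullet(y\bullet\lambda)$, which changes neither $\stdN(y\bullet\lambda,\eta)$ (Proposition \ref{stdstruct}) nor $\stdH(\lambda,y\bullet\lambda)$.

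The next step is to identify this Hom-space with the Arakawa--Suzuki functor $F_{\ell,\lambda}(M(\mu))=(M(\mu)\otimes V^{\otimes\ell})^{[\lambda]}_\lambda$ of Section \ref{ASO}. Evaluation at the canonical generator $v_\lambda$ of $M(\lambda)$ defines a linear map $\phi\mapsto\phi(v_\lambda)$ into $M(\mu)\otimes V^{\otimes\ell}$; its image has $\h$-weight $\lambda$ and, being a cyclic vector for a quotient of $M(\lambda)$, lies in the $\xi(\lambda)$-block, hence inside $(M(\mu)\otimes V^{\otimes\ell})^{[\lambda]}_\lambda$. The map is injective because $\phi$ is determined by $\phi(v_\lambda)$, and it is $\HH_\ell$-equivariant since $(h\cdot\phi)(v_\lambda)=h\cdot\phi(v_\lambda)$ for $h\in\HH_\ell$, the action of $\HH_\ell$ commuting with that of $U(\g)$. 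For surjectivity I would use that $M(\mu)\otimes V^{\otimes\ell}$ has a standard filtration with subquotients $M(\mu+\tau)$, $\tau\in P(V^{\otimes\ell})$, so $(M(\mu)\otimes V^{\otimes\ell})^{[\lambda]}$ is filtered by Vermas $M(w\bullet\lambda)$; since $\lambda$ is dominant every $w\bullet\lambda\le\lambda$, so each weight-$\lambda$ vector of $(M(\mu)\otimes V^{\otimes\ell})^{[\lambda]}$ is annihilated by $\n$ and hence equals $\phi(v_\lambda)$ for a unique $\phi$, by the universal property of $M(\lambda)$. (Alternatively, compare dimensions: the left side has dimension $\dim(V^{\otimes\ell})_{\lambda-\mu}$ by the argument in the proof of Theorem \ref{induced_morph}, and so does the right side by Theorem \ref{AS} and Lemma \ref{dim_eq}.)

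Composing these two $\HH_\ell$-isomorphisms with Theorem \ref{AS} then gives
\[
F_{\ell,\eta,\lambda}(\stdN(y\bullet\lambda,\eta))\;\cong\;\big(M(y\bullet\lambda)\otimes V^{\otimes\ell}\big)^{[\lambda]}_\lambda\;\cong\;
\begin{cases}
\stdH(\lambda,\,y\bullet\lambda)&\text{if }\lambda-y\bullet\lambda\in P(V^{\otimes\ell}),\\
0&\text{otherwise},
\end{cases}
\]
which is the first assertion. For the geometric reformulation I would unwind the dictionary of Section \ref{geo_sec}: when $W_\eta=W_\lambda$ the set $W_\eta\backslash W/W_\lambda$ parametrizing the standard modules of $\N(\xi(\lambda),\eta)$ (Theorem \ref{irrN_class} and the remark following it) is identified with the $P$-orbits $\mathcal{O}$ on $G/P$, so $\stdN(y\bullet\lambda,\eta)=\stdN(\mathcal{O})$; and under the combinatorial parametrization of Section \ref{multiseg-sec} together with $\Psi=\Phi^{-1}$, the module $\stdH(\lambda,y\bullet\lambda)=\stdH(\tilde{\delta}_{\lambda,y\bullet\lambda})$ is exactly $\stdH(\Psi(\mathcal{O}))$, the condition $\lambda-y\bullet\lambda\in P(V^{\otimes\ell})$ being precisely $\Psi(\mathcal{O})\ne 0$ (as in the corollary at the end of Section \ref{ASO}).

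The main obstacle is keeping track of the $\HH_\ell$-module structure along this chain rather than any single analytic estimate: Theorem \ref{induced_morph} already carries the bulk of the descent from $\stdN(\mu,\eta)$ back to category $\mathcal{O}$, so what remains to check carefully is that evaluation at $v_\lambda$ genuinely intertwines the $\HH_\ell$-actions on $\text{Hom}_{U(\g)}(M(\lambda),M(\mu)\otimes V^{\otimes\ell})$ and on $(M(\mu)\otimes V^{\otimes\ell})^{[\lambda]}_\lambda$, and that the two surjectivity arguments—projectivity of $M(\lambda)$ for $\lambda$ dominant on the Whittaker side, Lemma \ref{dim_eq} on the Hecke side—match up. The passage to geometric parameters then introduces nothing new beyond the identifications recorded in Section \ref{geo_sec} and Section \ref{multiseg-sec}.
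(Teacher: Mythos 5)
Your proposal is correct and follows essentially the same route as the paper: compose the isomorphism of Theorem \ref{induced_morph} with Theorem \ref{AS} via the identification of $\text{Hom}_{U(\g)}(M(\lambda),M(\mu)\otimes V^{\otimes\ell})$ with $(M(\mu)\otimes V^{\otimes\ell})^{[\lambda]}_\lambda$, then unwind the geometric dictionary. The only difference is that you prove that identification (evaluation at $v_\lambda$, with surjectivity from dominance of $\lambda$ or a dimension count), whereas the paper simply cites it as the canonical bijection of \cite[Remark 1.4.3]{AS}.
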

\begin{proof}
By Theorem \ref{induced_morph},
\beq
\overline{\Gamma}^\HH_\eta(\lambda,\mu):\text{Hom}_{U(\g)}\left(M(\lambda),M(\mu)\otimes V^{\otimes \ell}\right)\rightarrow H^0_\eta\left(\n_\eta, \left(\stdN(\mu,\eta)\otimes V^{\otimes \ell}\right)^{[\lambda]}_{\lambda_\z}\right)
\eeq
is an isomorphism of $\HH_\ell$-modules. By Theorem \ref{AS},  
\beq
\left(M(\mu)\otimes V^{\otimes \ell}\right)^{[\lambda]}_\lambda\cong \begin{cases}
 \stdH(\lambda,\mu)\quad\text{if $\lambda-\mu\in P(V^{\otimes \ell})$}\\
 0\qquad \qquad\quad\text{otherwise}
\end{cases}
\eeq 
as $\HH_\ell$-modules. There is a canonical bijection $\text{Hom}_{U(\g)}\left(M(\lambda),M(\mu)\otimes V^{\otimes \ell}\right)\rightarrow \left(M(\mu)\otimes V^{\otimes \ell}\right)^{[\lambda]}_\lambda$~\cite[Remark 1.4.3]{AS}. Therefore 
\beqn
H^0_\eta\left(\n_\eta, \left(\stdN(\mu,\eta)\otimes V^{\otimes \ell}\right)^{[\lambda]}_{\lambda_\z}\right)&\cong& \text{Hom}_{U(\g)}\left(M(\lambda),M(\mu)\otimes V^{\otimes \ell}\right)\\
&\cong& \left(M(\mu)\otimes V^{\otimes \ell}\right)^{[\lambda]}_\lambda\\
&\cong& \begin{cases}
 \stdH(\lambda,\mu)\quad\text{if $\lambda-\mu\in P(V^{\otimes \ell})$}\\
 0\qquad \qquad\quad\text{otherwise}
\end{cases}
\eeqn
as $\HH_\ell$-modules. 
\end{proof}

\end{subsubsection}
\begin{subsubsection}{Images of Irreducible Modules}
Fix a character $\eta$ of $\n$, and let $\ell=n$. The following theorem is the main result of the paper, and gives an algebraic relationship between simple Whittaker modules and simple modules for the graded affine Hecke algebra. Moreover, this algebraic relationship agrees with the geometric relationship between the corresponding multiplicity formulas and intersection homologies observed by Zelevinsky~\cite{Z}. 
\begin{thm}{\label{main}}
Let $\lambda$ be integral and dominant. Assume $W_\lambda=W_\eta$. Let ${}_\eta y_\lambda$ denote the longest element in the double coset $W_\eta y W_\lambda$. If $\lambda-{}_\eta y_\lambda\bullet\lambda\in P(V^{\otimes n})$, then
\beq
F_{n,\eta,\lambda}\left(\irrN(y\bullet\lambda,\eta)\right)=\irrH(\lambda,{}_\eta y_\lambda\bullet\lambda).
\eeq
If $\lambda-{}_\eta y_\lambda\bullet\lambda\notin P(V^{\otimes n})$, then $F_{n,\eta,\lambda}(\irrN(y\bullet\lambda,\eta))=0$. Again, we will restate the theorem in the more natural setting of geometric parameters:
\beq
F_{n,\eta,\lambda}\left(\irrN(\mathcal{O})\right)\cong\begin{cases}
 \irrH(\Psi(\mathcal{O}))\quad\text{ if $\Psi(\mathcal{O})\neq0$}\\
 0\qquad\qquad\quad\hspace{5pt}\text{otherwise}\end{cases}
\eeq
\end{thm}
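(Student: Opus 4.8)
The plan is to reduce everything to the corresponding statement for category $\mathcal{O}$, which is Theorem 1.3 of Arakawa–Suzuki, by exploiting the exactness of all the functors in play together with the multiplicity matching of Theorem \ref{mult_equal}. First I would assemble the two exact functors whose composite realizes the Arakawa–Suzuki functor for Whittaker modules: the Backelin functor $\overline{\Gamma}_\eta:\mathcal{O}'\to\N(\eta)$ from Theorem \ref{back}, and the Whittaker Arakawa–Suzuki functor $F_{n,\eta,\lambda}$. Theorem \ref{stds} already records that $F_{n,\eta,\lambda}(\stdN(y\bullet\lambda,\eta))\cong \stdH(\lambda,{}_\eta y_\lambda\bullet\lambda)$ when $\lambda-{}_\eta y_\lambda\bullet\lambda\in P(V^{\otimes n})$ and is zero otherwise, so the image of standard objects is under control; the content of this theorem is to propagate this to irreducibles.

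The key steps, in order, are: (1) Because $\lambda$ is dominant integral with $W_\eta=W_\lambda$, the irreducible Whittaker modules $\irrN(y\bullet\lambda,\eta)$ for $y$ ranging over double coset representatives form a complete list of simple objects in $\N(\xi(\lambda),\eta)$ (Theorem \ref{irrN_class} and the remark following it); likewise the $\stdN(y\bullet\lambda,\eta)$ are the standard objects, and Theorem \ref{whitklp} gives their composition-series multiplicities as Kazhdan–Lusztig polynomials $P_{w,y}(1)$. (2) Apply the exact functor $F_{n,\eta,\lambda}$ to a composition series of $\stdN(w\bullet\lambda,\eta)$; this passes to the Grothendieck group, so $[F_{n,\eta,\lambda}(\stdN(w\bullet\lambda,\eta))] = \sum_y P_{w,y}(1)\,[F_{n,\eta,\lambda}(\irrN(y\bullet\lambda,\eta))]$ in $K(\mathcal{H}_{\lambda+\rho})$. (3) Compare with the known expansion $[\stdH(\Psi(\mathcal{O})):\irrH(\Psi(\mathcal{O}'))]$ on the Hecke side (Theorem \ref{geo_mult_hecke}), which by Theorem \ref{mult_equal} is governed by the \emph{same} numbers $P_{w,y}(1)$ under the bijection $\Psi$ of Section \ref{geo_sec}. (4) Conclude by a triangularity/unitriangularity argument: both the multiplicity matrix $(P_{w,y}(1))$ on the Whittaker side and its counterpart on the Hecke side are unitriangular with respect to the Bruhat order (restricted to longest double coset representatives), hence invertible over $\mathbb{Z}$; since $[F_{n,\eta,\lambda}(\stdN(w\bullet\lambda,\eta))]=[\stdH(\lambda,{}_\eta w_\lambda\bullet\lambda)]$ (or $0$) by Theorem \ref{stds}, inverting the matrix forces $[F_{n,\eta,\lambda}(\irrN(y\bullet\lambda,\eta))]=[\irrH(\lambda,{}_\eta y_\lambda\bullet\lambda)]$ (or $0$) in the Grothendieck group. (5) Upgrade the Grothendieck-group equality to an isomorphism: $F_{n,\eta,\lambda}(\irrN(y\bullet\lambda,\eta))$ is a quotient of $F_{n,\eta,\lambda}(\stdN(y\bullet\lambda,\eta))\cong\stdH(\lambda,{}_\eta y_\lambda\bullet\lambda)$ by exactness applied to the surjection $\stdN(y\bullet\lambda,\eta)\twoheadrightarrow\irrN(y\bullet\lambda,\eta)$; since $\stdH(\lambda,{}_\eta y_\lambda\bullet\lambda)$ has the unique simple quotient $\irrH(\lambda,{}_\eta y_\lambda\bullet\lambda)$ (Theorem \ref{evens2.1}(ii)), and the image is nonzero iff its class in $K$ is nonzero, the only way the Grothendieck class can equal $[\irrH(\lambda,{}_\eta y_\lambda\bullet\lambda)]$ is if the quotient is precisely that simple module.

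The main obstacle I anticipate is step (5): knowing the class of $F_{n,\eta,\lambda}(\irrN)$ in the Grothendieck group is not by itself enough to pin down the module, so I need the extra structural input that $F_{n,\eta,\lambda}(\irrN(y\bullet\lambda,\eta))$ is a quotient (not merely a subquotient) of the standard module $\stdH(\lambda,{}_\eta y_\lambda\bullet\lambda)$, which has a \emph{unique} irreducible quotient. This is where exactness of $F_{n,\eta,\lambda}$ is essential — applying it to $0\to K\to\stdN(y\bullet\lambda,\eta)\to\irrN(y\bullet\lambda,\eta)\to 0$ yields a surjection $\stdH(\lambda,{}_\eta y_\lambda\bullet\lambda)\twoheadrightarrow F_{n,\eta,\lambda}(\irrN(y\bullet\lambda,\eta))$. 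Combined with the numerical constraint from step (4) that the class is either $0$ or exactly $[\irrH(\lambda,{}_\eta y_\lambda\bullet\lambda)]$, and the fact that any nonzero quotient of $\stdH$ surjects onto its head $\irrH$, we get the claimed isomorphism. A secondary point requiring care is verifying that the unitriangularity on the Hecke side is with respect to a partial order compatible (via $\Psi$) with the Bruhat order on double cosets used on the Whittaker side — but this is exactly the content of the stratification-preserving property of $\varphi$ recorded in Theorem \ref{aff_to_proj} and used in the proof of Theorem \ref{mult_equal}.
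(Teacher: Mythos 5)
Your proposal is correct and follows essentially the same route as the paper: the paper's proof of Theorem \ref{main} simply invokes "the statements following Theorem \ref{mult_equal}," which are exactly the combination of the standard-to-standard result (Theorem \ref{stds}), the multiplicity matching of Theorem \ref{mult_equal}, and the implicit unitriangular inversion in the Grothendieck group that you spell out. Your steps (4) and (5) make explicit details the paper leaves to the reader (and step (5) is even slightly redundant, since in a finite-length category the classes of simples form a basis of the Grothendieck group, so $[F(\irrN)]=[\irrH]$ already forces the isomorphism).
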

\begin{proof}
This follows directly from the statements following Theorem \ref{mult_equal}.    
\end{proof}
\begin{cor}
Every simple $\HH_n$-module appears as the image of an irreducible Whittaker module $\irrN(\mathcal{O})$ for some choice of $\eta\in\text{ch}\n$ and $\mathcal{O}$ a $P$ orbit on $G/P$. 
\end{cor}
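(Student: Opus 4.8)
The plan is to show that the geometric parameters on the Hecke side are all hit by the map $\Psi$ coming from some choice of $\eta$, and then invoke Theorem \ref{main}. Concretely, fix a simple $\HH_n$-module $L$. By the geometric classification (Theorem following Theorem \ref{geo_mult_hecke}), $L \cong \irrH(X_\tau, \mathbb{C}_{X_\tau})$ for some multisegment $\tau \in \mathrm{MS}_\circ(\lambda+\rho)$, where $\lambda$ is chosen integral and dominant so that the central character of $L$ is $\chi_{\lambda+\rho}$; note $\lambda$ can be taken dominant since the $W$-orbit of the central character always contains a dominant representative. Equivalently, $L \cong \irrH(\Psi(\mathcal{O}'))$ for some $L$-orbit on $\g_1(\sigma)$, where $\sigma$ corresponds to $\lambda+\rho$.

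The key step is the choice of $\eta$. Given the dominant integral $\lambda$, I would let $\eta$ be a character of $\n$ with $\Pi_\eta = \{\alpha \in \Pi : \langle \lambda+\rho, \alpha^\vee\rangle = 0\}$, so that $W_\eta = \{w \in W : w(\lambda+\rho) = \lambda+\rho\} = W_\lambda$. Such an $\eta$ exists: one simply picks $\eta|_{\g_\alpha} \neq 0$ exactly for $\alpha \in \Pi_\eta$. With this choice, the hypotheses of Theorem \ref{main} ($\lambda$ integral and dominant, $W_\lambda = W_\eta$) are satisfied. Then the parabolic $P = P_\eta$ has Levi with Weyl group $W_\lambda$, and by the discussion in Section \ref{geo_sec} the $P$-orbits on $G/P$ are parametrized by $\mathcal{P}_\N(\eta,\lambda)$ while the $L$-orbits on $\g_1$ are parametrized by $\mathcal{P}_\HH(\lambda+\rho)$. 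Since $\Phi$ (hence $\Psi$ on its domain of definition) is built from the map $\varphi$ of Theorem \ref{aff_to_proj}, and $\varphi: \g_1 \to F(V)$ has the property that $\varphi(L\bullet N)$ is dense in $P(1+N^t)\bullet\p$, the induced map $\Phi: L\backslash\g_1 \to P\backslash G/P$ is injective onto the $P$-orbits meeting the image of $\varphi$; in particular every $L$-orbit $\mathcal{O}'$ on $\g_1$ arises as $\Phi^{-1}(\mathcal{Q})$ for $\mathcal{Q} = P\cdot\varphi(\mathcal{O}') \in \mathrm{im}\,\Phi$, so $\Psi(\mathcal{Q}) = \mathcal{O}' \neq 0$. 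Applying Theorem \ref{main} to this $\mathcal{Q}$ gives $F_{n,\eta,\lambda}(\irrN(\mathcal{Q})) \cong \irrH(\Psi(\mathcal{Q})) = \irrH(\mathcal{O}') \cong L$.

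The only place requiring care — and the main obstacle — is confirming that the geometric parameter attached to $L$ genuinely lies in the image of $\Psi$, i.e.\ that $\Psi$ (for the chosen $\eta$) is \emph{surjective} onto $\mathcal{P}_\HH(\lambda+\rho)$, not merely partially defined. This reduces to the surjectivity of $\Phi$ onto the set of $P$-orbits that are relevant, which in turn follows from part (b) of Theorem \ref{aff_to_proj}: every $L$-orbit $Q = L\bullet N$ on $\g_1$ maps under $\varphi$ to a dense subset of the $P$-orbit $P(1+N^t)\bullet\p$, so $\Phi(Q)$ is a well-defined $P$-orbit and the assignment $Q \mapsto \Phi(Q)$ is a bijection of $L\backslash\g_1$ onto its image in $P\backslash G/P$; hence $\Psi \circ (\text{this image inclusion})^{-1}$ recovers every element of $\mathcal{P}_\HH(\lambda+\rho)$. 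One should also double-check the edge case where $L$ has a singular central character so that $W_\lambda \neq 1$ — but this is precisely why we allow $\eta$ with $W_\eta = W_\lambda$ possibly nontrivial, and the argument above is uniform in that choice. I would therefore organize the write-up as: (1) reduce to producing a preimage under $\Psi$; (2) choose $\lambda$ dominant and $\eta$ with $W_\eta = W_\lambda$ realizing the central character; (3) invoke Theorem \ref{aff_to_proj}(b) for surjectivity of $\Phi$ onto its image and hence existence of the $P$-orbit $\mathcal{Q}$ with $\Psi(\mathcal{Q}) = \mathcal{O}'$; (4) conclude by Theorem \ref{main}.
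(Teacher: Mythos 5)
Your proposal is correct and follows essentially the same route as the paper: choose $\eta$ with $W_\eta=W_\lambda$ for the (dominant) central character of the given simple module, observe that $\Phi(\mathcal{O})$ is a single $P$-orbit so that $\Psi(\Phi(\mathcal{O}))=\mathcal{O}$, and apply Theorem \ref{main}. Your additional remarks on the existence of such an $\eta$ and on why $\Phi$ is a bijection onto its image (via Theorem \ref{aff_to_proj}(b)) simply make explicit steps the paper leaves implicit.
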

\begin{proof}
For any give parameter $\mathcal{O}\in \mathcal{P}_\HH(\lambda+\rho)$, choose $\eta$ such that $W_\eta=W_\lambda$. Recall the map $\Phi$ from Section 4,
\beq
\Phi(\mathcal{O})=P\cdot (1+N^t)\cdot\p
\eeq
for some $N\in \mathcal{O}$. We have that $\Phi(\mathcal{O})$ is a single $P$ (hence $P_\eta$) orbit on $G / P$. Therefore $\Psi(\Phi(\mathcal{O}))=\mathcal{O}$, and 
\beq
F_{n,\eta,\lambda}(\irrN(\Phi(\mathcal{O})))=\irrH(\mathcal{O}).
\eeq
\end{proof}
\end{subsubsection}
\end{subsection}
\end{section}

\bibliographystyle{amsalpha}
\providecommand{\bysame}{\leavevmode\hbox to3em{\hrulefill}\thinspace}
\providecommand{\MR}{\relax\ifhmode\unskip\space\fi MR }
\providecommand{\MRhref}[2]{%
  \href{http://www.ams.org/mathscinet-getitem?mr=#1}{#2}
}
\providecommand{\href}[2]{#2}

\end{document}